\def\ge{\geqslant}
\def\le{\leqslant}
\def\a{\alpha}
\def\d{\delta}
\def\e{\epsilon}
\def\s{\sigma}
\def\t{\tau}
\def\th{\theta}
\def\k{\kappa}
\def\l{\lambda}
\def\i{^{-1}}
\def\<{\langle}
\def\>{\rangle}
\newcommand{\sA}{\ensuremath{\mathscr{A}}\xspace}
\newcommand{\sR}{\ensuremath{\mathscr{R}}\xspace}
\newcommand{\fka}{\ensuremath{\mathfrak{a}}\xspace}
\newcommand{\fkR}{\ensuremath{\mathfrak{R}}\xspace}
\newcommand{\BC}{\ensuremath{\mathbb {C}}\xspace}
\newcommand{{\BG}}{\ensuremath{\mathbb {G}}\xspace}
\newcommand{{\BK}}{\ensuremath{\mathbb {K}}\xspace}
\newcommand{\BN}{\ensuremath{\mathbb {N}}\xspace}
\newcommand{\BR}{\ensuremath{\mathbb {R}}\xspace}
\newcommand{\BS}{\ensuremath{\mathbb {S}}\xspace}
\newcommand{\BZ}{\ensuremath{\mathbb {Z}}\xspace}
\newcommand{\CI}{\ensuremath{\mathcal {I}}\xspace}
\newcommand{\CK}{\ensuremath{\mathcal {K}}\xspace}
\newcommand{\RR}{\ensuremath{\mathrm {R}}\xspace}
\newcommand{\Ad}{{\mathrm{Ad}}}
\DeclareMathOperator{\Gal}{Gal}
\DeclareMathOperator{\Hom}{Hom}
\let\Im\relax
\DeclareMathOperator{\Im}{Im}
\def\Tr{{\mathrm{Tr}}}
\def\rig{\text{rig}}
\def\tW{\tilde W}
\newtheorem{theorem}{Theorem}
\newtheorem{proposition}[theorem]{Proposition}
\newtheorem{theoremA}{Theorem}
\newtheorem{lemma}[theorem]{Lemma}
\newtheorem{corollary}[theorem]{Corollary}
\theoremstyle{definition}
\newtheorem{example}[theorem]{Example}
\newtheorem{remark}[theorem]{Remark}
\numberwithin{equation}{section}
\numberwithin{theorem}{section}
\renewcommand{\to}{%
   \ifbool{@display}{\longrightarrow}{\rightarrow}%
   }
\let\shortmapsto\mapsto
\renewcommand{\mapsto}{%
   \ifbool{@display}{\longmapsto}{\shortmapsto}%
   }
\newlength{\olen}
\newlength{\ulen}
\newlength{\xlen}
\newcommand{\xra}[2][]{%
   \ifbool{@display}%
      {\settowidth{\olen}{$\overset{#2}{\longrightarrow}$}%
       \settowidth{\ulen}{$\underset{#1}{\longrightarrow}$}%
       \settowidth{\xlen}{$\xrightarrow[#1]{#2}$}%
       \ifdimgreater{\olen}{\xlen}%
          {\underset{#1}{\overset{#2}{\longrightarrow}}}%
          {\ifdimgreater{\ulen}{\xlen}%
             {\underset{#1}{\overset{#2}{\longrightarrow}}}
             {\xrightarrow[#1]{#2}}}}%
      {\xrightarrow[#1]{#2}}
   }
\newcommand{\xyra}[2][]{%
   \settowidth{\xlen}{$\xrightarrow[#1]{#2}$}%
   \ifbool{@display}%
      {\settowidth{\olen}{$\overset{#2}{\longrightarrow}$}%
       \settowidth{\ulen}{$\underset{#1}{\longrightarrow}$}%
       \ifdimgreater{\olen}{\xlen}%
          {\mathrel{\xymatrix@M=.12ex@C=3.2ex{\ar[r]^-{#2}_-{#1} &}}}%
          {\ifdimgreater{\ulen}{\xlen}%
             {\mathrel{\xymatrix@M=.12ex@C=3.2ex{\ar[r]^-{#2}_-{#1} &}}}
             {\mathrel{\xymatrix@M=.12ex@C=\the\xlen{\ar[r]^-{#2}_-{#1} &}}}}}%
      {\mathrel{\xymatrix@M=.12ex@C=\the\xlen{\ar[r]^-{#2}_-{#1} &}}}%
   }
\newcommand{\xla}[2][]{%
   \ifbool{@display}%
      {\settowidth{\olen}{$\overset{#2}{\longleftarrow}$}%
       \settowidth{\ulen}{$\underset{#1}{\longleftarrow}$}%
       \settowidth{\xlen}{$\xleftarrow[#1]{#2}$}%
       \ifdimgreater{\olen}{\xlen}%
          {\underset{#1}{\overset{#2}{\longleftarrow}}}%
          {\ifdimgreater{\ulen}{\xlen}%
             {\underset{#1}{\overset{#2}{\longleftarrow}}}
             {\xleftarrow[#1]{#2}}}}%
      {\xleftarrow[#1]{#2}}
   }
\newcommand{\isoarrow}{%
   \ifbool{@display}{\overset{\sim}{\longrightarrow}}{\xrightarrow\sim}%
   }
\begin{document}

\title[]{Cocenter of $p$-adic groups, II: induction map}
\author[X. He]{Xuhua He}
\address{Department of Mathematics, University of Maryland, College Park, MD 20742 and Institute for Advanced Study, Princeton, NJ 08540}
\email{xuhuahe@math.umd.edu}

\thanks{X. H. was partially supported by NSF DMS-1463852 and DMS-1128155 (from IAS)}

\keywords{Hecke algebras, $p$-adic groups, cocenter}
\subjclass[2010]{22E50, 20C08}

\date{\today}

\begin{abstract}
In this paper, we study some relation between the cocenter $\bar H(G)$ of the Hecke algebra $H(G)$ of a connected reductive group $G$ over an nonarchimedean local field and the cocenter $\bar H(M)$ of its Levi subgroups $M$. 

Given any Newton component of $\bar H(G)$, we construct the induction map $\bar i$ from the corresponding Newton component of $\bar H(M)$ to it. We show that this map is surjective. This leads to the Bernstein-Lusztig type presentation of the cocenter $\bar H(G)$, which generalizes the work \cite{HN2} on the affine Hecke algebras. We also show that the map $\bar i$ we constructed is adjoint to the Jacquet functor and in characteristic $0$, the map $\bar i$ is an isomorphism. 
\end{abstract}

\maketitle


\section*{Introduction}

\subsection{} Let $\BG$ be a connected reductive group over a nonarchimedean local field $F$ of arbitrary characteristic and $G=\BG(F)$. Let $R$ be an algebraically closed field of characteristic not equal to $p$, where $p$ is the characteristic of residue field of $F$. Let $H_R$ be the Hecke algebra of $G$ over $R$ and $\bar H_R=H_R/[H_R, H_R]$ be its cocenter. Let $\fkR(G)_R$ be the $R$-vector space with basis the isomorphism classes of irreducible smooth admissible representations of $G$ over $R$. Then we have the trace map $$\Tr_R: \bar H_R \to \fkR(G)_R^*.$$

On the representation side, we have the induction functor and the Jacquet functor $$i_{M, R}: \fkR(M)_R \to \fkR(G)_R, \qquad r_{M, R}: \fkR(G)_R \to \fkR(M)_R,$$ where $M$ is a Levi subgroup of $G$.

What happens on the cocenter side? 

The functor adjoint to the induction functor $i_M$ is the restriction map $\bar r_{M, R}: \bar H(G)_R \to \bar H(M)_R$. It can be expressed explicitly via the Van Dijk's formula. In this paper, we investigate the functor $\bar i_{M, R}: \bar H_R(M) \to \bar H_R(G)$, which is adjoint to the Jacquet functor $r_{M, R}: \fkR(G)_R \to \fkR(M)_R$. 

\subsection{} We first describe the properties we expect for the map $\bar i_{M, R}$ and then discuss the approach toward it. 

First, instead of working over various algebraically closed fields $R$, it is desirable to have the map $\bar i_M$ defined on the integral form $\bar H$ (the cocenter of the Hecke algebra of $\BZ[\frac{1}{p}]$-valued functions). Such map, if exists, provides not only a uniform approach to the map $\bar i_{M, R}$ for all $R$, but also some useful information on the mod-$l$ representations (see Theorem \ref{thmD} in the introduction and a future work \cite{hecke-3} for some results in this direction). 

Second, in \cite{hecke-1}, we introduced the Newton decomposition. Roughly speaking, $$G=\sqcup G(v) \quad \text{ and } \quad \bar H=\oplus \bar H(v),$$ where $v$ runs over the set of dominant rational coweights of $G$. Such description is expected to play an important role in the representation theory of $p$-adic groups. In order to relate the Newton decomposition with the representations, we would like to know that the Newton decomposition is compatible with the map $\bar i_M$. 

\subsection{}Now we discuss several approaches in the literature towards the understanding of the map $\bar i_M$. 

Over $\BC$, the spectral density Theorem of Kazhdan \cite{Kaz} asserts that the trace map $\Tr_\BC: \bar H_\BC \to \fkR(G)_\BC^*$ is injective. Hence the map $\bar i_{M, \BC}$ is uniquely determined by the adjunction formula $$\Tr^M_\BC(f, r_{M, \BC}(\pi))=\Tr^G_\BC(\bar i_{M, \BC}(f), \pi).$$ However, if $R$ is of positive characteristic, the trace map $\Tr_R$ may not be injective and thus the map $\bar i_{M, R}$ is not uniquely determined by the adjunction formula. 

In those cases, one may use the categorical description of the cocenter to give a definition of $\bar i_{M, R}$. Bernstein's second adjointness theorem implies that the map $\bar i_{M, R}$ defined in this way is adjoint to the Jacquet functor (see \cite[(1.8)]{Dat}). However, it is not clear that this map preserves the integral structure (see some discussion in \cite[\S 4.27]{Dat}). Also it is not clear if this description is compatible with the Newton decomposition. 

\subsection{} A different, but more explicit approach is given by Bushnell in \cite{Bu}. 

Note that the induction functor $i_{M, R}$ on the representations of $M$ depends not only on the Levi subgroup $M$, but also on the parabolic subgroup $P$ with Levi factor $M$. However, when passing to the Grothendieck group of the representations, the dependence of $P$ disappears. On the other hand, the Jacquet functor $r_{M, R}$, even if one passes to the Grothendieck groups of the representations, still depend on the choice of parabolic subgroup. 

Let $v$ be a rational coweight. Then $v$ determines a Levi subgroup $M=M_v$ and the parabolic subgroup $P_v=M N_v$. Let $\CK$ be a ``nice'' open compact subgroup of $G$ (e.g. the $n$-th congruent subgroup $\CI_n$ of an Iwahori subgroup) and $\CK_M=\CK \cap M$. Bushnell introduced the $P_v$-positive elements of $M$ and the subalgebra $H^v(M, \CK_M)$ of $H(M, \CK_M)$, consisting of compactly supported $\CK_M$-biinvariant functions supported in the $P_v$-positive elements. Then he proves that 

(a) The algebra $H(M, \CK_M)$ is isomorphic to the localization of $H^v(M, \CK_M)$ at a strongly positive element $f_z$. 

(b) The map $$j_{v, \CK}: H^v(M, \CK_M) \to H(G, \CK), \d_{\CK_M m \CK_M} \mapsto \d_{P_v}(m)^{-\frac{1}{2}} \frac{\mu_G(\CK)}{\mu_M(\CK_M)} \d_{\CK m \CK}$$ is an injective algebra homomorphism.

(c) The map $j_{v, \CK}$ is adjoint to the Jacquet functor $r_{M, \CK, R}: \fkR_\CK(G)_R \to \fkR_{\CK \cap M}(M)_R$ relative to $P_v$. Here $\fkR_\CK(G)_R \subset \fkR(G)_R$ consists of representations generated by their $\CK$-fixed vectors.

Moreover, Bushnell's map $j_{v, \CK}$ also preserves the integral structure of the Hecke algebra. 

\subsection{} It is tempting to apply Bushnell's result to the cocenter of Hecke algebras. However, there are several obstacles. 

If $\CK$ is the Iwahori or pro-$p$ Iwahori subgroup, then the map $j_{v, \CK}$ extends to an algebra homomorphism $H(M, \CK \cap M) \to H(G, \CK)$. In this case, the localization of Hecke algebra $H^v(M, \CK \cap M)$ is consistent with the Bernstein-Lusztig presentation (\cite{L} and \cite{V}). However, as pointed out in \cite{Bu}, these are essentially the only cases of this kind. Thus one may only use $j_{v, \CK}$ to deduce the induction map from part of the cocenter of $H(M)$ to the cocenter of $H(G)$.

The Newton strata of $M$ with integral dominant Newton points are positive, but the strata with rational (but not integral) Newton point may not be positive for any parabolic $P$. Those strata are not in the domain of the maps $j_{v, \CK}$. 

Also if one fixes $M$ and $P$, the maps $j_{v, \CK}$ are not compatible with the change of open compact subgroups $\CK$, even at the cocenter level (see \S\ref{non-com}). Thus the maps $j_{v, \CK}$ does not induce a well-defined map $\bar H^v(M) \to \bar H$. 

\subsection{} The idea behind Bushnell's map $j_{v, \CK}$ is to enlarge the open compact subset $\CK_M m \CK_M$ of $M$ to the open compact subset $\CK m \CK$ of $G$ by multiplying the open compact subgroup $\CK$. Inspired by it, we have the following construction. 

Let $v$ be a rational coweight and $P=M N_v$ be the associated parabolic subgroup. The elements in the Newton stratum $M(v)$ may not be $P_v$-positive, but a sufficiently large power of it is $P_v$-positive. One may enlarge an open compact subset inside $M(v)$ by multiplying a suitable open compact subgroup of $G$ to obtain an open compact subset of $G$. Unlike the situation in \cite{Bu}, the lack of $P_v$-positivity condition prevents us to give an explicit open compact subgroup of $G$ that works in our situation. We have to use sufficiently small open compact subgroup of $G$. Since $v$ is strictly positive with respect to $N_v$, we finally show that our construction is independent of the choice of such open compact subgroups. We have 

\begin{theoremA}\label{thmA}
Let $v$ be a rational coweight and $M=M_v$. Let $\bar v$ be the $G$-dominant coweight associated to $v$. Then 

(1) [Theorem \ref{levi}] The map $$\d_{m \CK_M} \mapsto \d_{P_v}(m)^{-\frac{1}{2}} \frac{\mu_M(\CK_M)}{\mu_G(\CK_M \CK)} \d_{m \CK_M \CK}+[H, H]$$ for sufficiently small open compact subgroup $\CK$ of $G$ gives a well-defined map $$\bar i_v: \bar H(M; v) \to \bar H.$$

(2) [Theorem \ref{BL}] The image of $\bar i_v$ equals $\bar H(G; \bar v)$. 

(3) [Theorem \ref{inj-i}] If moreover, $\text{char}(F)=0$, then the map $\bar i_v$ gives a bijection between $\bar H(M; v)$ and $\bar H(G; \bar v)$. 
\end{theoremA}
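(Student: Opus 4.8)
The plan is to prove that $\bar i_v$ is a bijection onto $\bar H(G;\bar v)$ when $\charac(F)=0$ by establishing injectivity, since surjectivity is already Theorem A(2). The main tool available in characteristic $0$ that fails in positive characteristic is the injectivity of the trace map on the cocenter — more precisely, Kazhdan's density theorem, which over $\BC$ (and hence over any algebraically closed field of characteristic $0$, by a standard base-change/specialization argument, or by working with the integral form and tensoring) says $\Tr_R\colon\bar H_R\to\fkR(G)_R^*$ is injective. So I would first reduce to the statement over an algebraically closed field $R$ of characteristic $0$: the integral cocenter $\bar H(M;v)$ is a free $\BZ[\tfrac1p]$-module (this should follow from the explicit bases/structure established in the earlier parts of the paper for Newton components), so injectivity of $\bar i_v$ over $\BZ[\tfrac1p]$ can be checked after tensoring with such an $R$, provided one knows $\bar i_v\otimes R$ is the map $\bar i_{M,R}$ for that $R$ — which is part (1) together with the compatibility of the construction with scalar extension.

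The heart of the argument is then the adjunction identity. By the construction in Theorem A(1) (and the fact, asserted in the paper, that $\bar i_v$ is adjoint to the Jacquet functor — this is the analogue of Bushnell's (c) for the global map), we have for all $f\in\bar H(M;v)_R$ and all irreducible smooth $\pi$ of $G$ over $R$:
\begin{equation*}
\Tr^G_R(\bar i_v(f),\pi)=\Tr^M_R(f, r_{M,R}(\pi)).
\end{equation*}
Now suppose $\bar i_v(f)=0$. Then $\Tr^M_R(f, r_{M,R}(\pi))=0$ for every irreducible $\pi$ of $G$. The key point is that every irreducible smooth representation $\sigma$ of $M$ occurs in $r_{M,R}(\pi)$ for some irreducible $\pi$ of $G$ — indeed $\sigma$ is a subrepresentation (or subquotient) of $r_{M,R}(\pi)$ whenever $\pi$ is an irreducible subquotient of the parabolic induction $i_{P_v,R}(\sigma)$, by Frobenius reciprocity / the second adjointness theorem (here $\charac(F)=0$ guarantees all the needed finiteness: $i_{P_v}(\sigma)$ has finite length). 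Running over all such $\pi$ and using that $\Tr^M_R$ is additive in short exact sequences, one deduces $\Tr^M_R(f,\sigma)=0$ for every irreducible $\sigma$ of $M$, i.e. $\Tr^M_R(f)=0$ as a linear functional on $\fkR(M)_R$. By Kazhdan density applied to $M$, this forces $f=0$ in $\bar H(M)_R$. Finally I must check that $f$, which lies in the Newton component $\bar H(M;v)_R$, being zero in the full cocenter $\bar H(M)_R$ implies it is zero in $\bar H(M;v)_R$ — this is immediate because the Newton decomposition $\bar H(M)_R=\oplus_w\bar H(M;w)_R$ is a direct sum decomposition, so each component injects.

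The main obstacle I anticipate is not the density/adjunction mechanics but rather making the reduction from the integral statement to the characteristic-$0$ statement fully rigorous, and in particular pinning down that $\bar i_v\otimes_{\BZ[1/p]}R$ coincides with the map adjoint to $r_{M,R}$. One has to be careful: the construction of $\bar i_v$ in Theorem A(1) uses "sufficiently small $\CK$," and one needs to know the resulting element of $\bar H$ is independent of $\CK$ (granted by Theorem A(1)) \emph{and} that it computes the correct trace against $\pi$; for $\pi$ with $\CK$-fixed vectors this is exactly Bushnell's computation $(c)$ transported through the identification, but checking it holds for the global, $\CK$-independent map — and for all $\pi$ simultaneously via a cofinal system of $\CK$'s — is where the real work lies. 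A secondary subtlety is ensuring that the trace functional $\Tr^M_R(f,-)$ vanishing on all irreducibles really does use only characteristic-$0$ inputs (finite length of parabolic inductions and exhaustion of $\Irr(M)$ by Jacquet subquotients), which it does, so the characteristic hypothesis enters precisely here and in the appeal to Kazhdan's theorem.
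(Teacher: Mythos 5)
Your overall strategy for A(3) matches the paper's: reduce injectivity of the integral map $\bar i_v$ to injectivity over $\BC$ via the freeness of $\bar H(M)$ (which is why $\charac(F)=0$ is needed, via Vign\'eras's result), then show $\bar i_v(\tilde f)=0$ forces $\tilde f\in\ker\Tr^M_\BC$, and conclude by Kazhdan's density theorem. The use of the adjunction formula (Theorem \ref{adj}) and of Newton-component-wise injectivity at the end are also exactly as in the paper. However, there is a genuine gap in the central step, and the paper's proof uses a different argument precisely to close it.

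You assert that because every irreducible $\sigma$ of $M$ is a subquotient of $r_{M,R}(\pi)$ for some irreducible $\pi$ of $G$, and because $\Tr^M_R(f,r_{M,R}(\pi))=0$ for all $\pi$, one may ``run over all such $\pi$'' and deduce $\Tr^M_R(f,\sigma)=0$. This does not follow. The Jacquet module $r_{M,R}(\pi)$ has, in general, several irreducible constituents besides $\sigma$ (e.g.\ for $G=\GL_2$, $M=T$, and $\sigma$ a regular character, $r_B(\pi)=\sigma+w\sigma$ in the Grothendieck group), and the vanishing of the sum $\Tr^M_R(f,\sigma)+\Tr^M_R(f,w\sigma)+\cdots$ gives no control over the individual term $\Tr^M_R(f,\sigma)$. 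The paper (Theorem \ref{newton-ker}) avoids this by twisting by unramified characters $\chi\in\Psi(M)_R$, applying the Mackey formula to $r_{M,R}\circ i_{M,R}(\sigma\circ\chi)$, and observing that among the summands $\Tr^M_R(f,i^M_{M\cap{}^wM,R}(\cdots\circ{}^{\dot w}\chi))$, only the $w=1$ term contributes the ``positive part'' of the resulting algebraic function of $\chi$ (since $w(v)$ is $G$-dominant only for $w=1$ when $M=M_v$). That separation is what lets one extract $\Tr^M_R(f,\sigma\circ\chi)=0$ for all $\chi$, and hence $f\in\ker\Tr^M_R$. Without this (or an equivalent) separation argument, your deduction in step 3 is invalid. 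Your reduction steps (specialization to $\BC$, freeness, compatibility with scalar extension) are fine, but the key lemma you need is exactly this Mackey-plus-twist analysis, not mere exhaustion of $\Irr(M)$ by Jacquet modules.
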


\begin{theoremA}[Theorem \ref{adj}] \label{thmB}
Let $v$ be a rational coweight and $M=M_v$. Then for any $f \in \bar H_R(M; v)$ and $\pi \in \fkR(G)_R$, we have the following adjunction formula $$\Tr^M_R(f, r_{v, R}(\pi))=\Tr^G_R(\bar i_v(f), \pi).$$ Here $r_{v, R}: \fkR(G)_R \to \fkR(M)_R$ is the Jacquet functor relative to $P_v$. 
\end{theoremA}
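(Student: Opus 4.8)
The plan is to pin everything down at the level of a single, sufficiently small open compact subgroup $\CK$ and then compare with Bushnell's description of the Jacquet functor through the map $j_{v,\CK}$. Both sides of the asserted identity are $R$-linear in $f$ and depend on $\pi$ only through its class in $\fkR(G)_R$, so it suffices to treat $f=[\delta_{m\CK_M}]$ with $m\in M(v)$ and $\pi$ irreducible, hence admissible. The case $v=0$ is trivial, since then $M=G$, $P_v=G$, $r_{v,R}=\id$ and $\bar i_v=\id$; so assume $v\neq 0$. Shrinking $\CK$, we may arrange simultaneously that $\bar i_v([\delta_{m\CK_M}])$ is computed by the formula of Theorem~\ref{levi} for this $\CK$ --- legitimate precisely because that formula is independent of the sufficiently small $\CK$ --- and that $\pi$ is generated by $\pi^\CK$, so $\pi\in\fkR_\CK(G)_R$ and $r_{v,R}(\pi)\in\fkR_{\CK_M}(M)_R$ (where $r_{M,\CK,R}=r_{v,R}$ on this subcategory). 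Then $\Tr^G_R(\bar i_v(f),\pi)=\tr(\bar i_v(f)\mid\pi^\CK)$ and $\Tr^M_R(f,r_{v,R}(\pi))=\tr(\delta_{\CK_M m\CK_M}\mid r_{v,R}(\pi)^{\CK_M})$ are traces of operators on finite-dimensional spaces, computed inside the module categories of $H(G,\CK)$ and $H(M,\CK_M)$.

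\emph{The $P_v$-positive case.} Assume first that $m$ is $P_v$-positive, so $\delta_{\CK_M m\CK_M}\in H^v(M,\CK_M)$. Comparing the normalization in Theorem~\ref{levi} with Bushnell's part (b), $\bar i_v([\delta_{m\CK_M}])$ is the image in $\bar H$ of $j_{v,\CK}(\delta_{\CK_M m\CK_M})$, whence $\Tr^G_R(\bar i_v(f),\pi)=\tr(\delta_{\CK_M m\CK_M}\mid\pi^\CK)$ for the $H^v(M,\CK_M)$-module structure on $\pi^\CK$ obtained by restricting along $j_{v,\CK}$. By Bushnell's parts (a) and (c), the $H(M,\CK_M)$-module $r_{v,R}(\pi)^{\CK_M}$ is the localization of this finite-dimensional $H^v(M,\CK_M)$-module at the strongly positive element $f_z$; since $f_z$ acts as an endomorphism of a finite-dimensional space, $\pi^\CK=W_{\mathrm{inv}}\oplus W_0$ with $f_z$ invertible on $W_{\mathrm{inv}}\cong r_{v,R}(\pi)^{\CK_M}$ and nilpotent on $W_0$. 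Hence $\Tr^G_R(\bar i_v(f),\pi)-\Tr^M_R(f,r_{v,R}(\pi))=\tr(\delta_{\CK_M m\CK_M}\mid W_0)$, and it remains to show this vanishes. Here the strict $P_v$-dominance of $v$ is decisive: a high power $m^N$ is strongly $P_v$-positive and, modulo a $P_v$-positive factor, agrees with a power of $f_z$; running the convolution in $H^v(M,\CK_M)$ then shows that a power of $\delta_{\CK_M m\CK_M}$ acts, on any $H^v(M,\CK_M)$-module, as an $f_z$-multiple, so $\delta_{\CK_M m\CK_M}$ is nilpotent on $W_0$ and has trace $0$ there. This proves the formula for $P_v$-positive $m$.

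\emph{General $m$, and the main obstacle.} For arbitrary $m\in M(v)$ one reduces to the positive case: by the structure of the Newton component $\bar H_R(M;v)$ from \cite{hecke-1}, the class $[\delta_{m\CK_M}]$ is an $R$-linear combination of classes $[\delta_{m'\CK_M}]$ with $m'\in M(v)$ that are $P_v$-positive, and one checks that the factors $\delta_{P_v}(\cdot)^{-\frac12}$ and the Haar measure ratios appearing in the two formulas are respected under these rewritings; assembling the three steps then yields the adjunction formula. I expect the two points needing the most care to be, first, the reduction of an arbitrary class in $\bar H_R(M;v)$ to $P_v$-positive representatives (equivalently, a dominance / minimal-length statement inside the Newton stratum $M(v)$, analogous to the affine Hecke algebra situation of \cite{HN2}), and, second, the bookkeeping with congruence radii making precise the claim that a power of $\delta_{\CK_M m\CK_M}$ is controlled by $f_z$ on the nilpotent part $W_0$ --- which is the same strict-dominance mechanism that drives Theorems~\ref{levi} and \ref{BL}.
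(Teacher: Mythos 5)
The gap you flag yourself is genuine and is the crux. Your ``General $m$'' step asserts that every class in $\bar H_R(M;v)$ is an $R$-linear combination of classes $[\delta_{m'\CK_M}]$ with $m'\in M(v)$ that are $P_v$-positive, so that one can feed each summand into Bushnell's map $j_{v,\CK}$. The paper's Example in \S 2 is placed there precisely to show that this kind of reduction is not available: for $G=\GL_5$, $M=\GL_3\times\GL_2$ and $v=(\tfrac{2}{3},\tfrac{2}{3},\tfrac{2}{3},\tfrac{1}{2},\tfrac{1}{2})$, the element $w=t^{(1,1,0,1,0)}(132)(45)$ lies in the Newton stratum $M(v)$ but $\dot w$ is not $(P_v,*)$-positive. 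The Iwahori--Matsumoto presentation spans $\bar H(M;v)$ by $\CI_M$-double cosets of minimal-length elements with Newton point $v$, and there is no ``minimal-length implies $P_v$-positive'' statement inside $M(v)$ that would let you rewrite these in terms of positive support; the failure of positivity inside Newton strata is exactly what motivates the paper's introduction of quasi-positive elements (\S\ref{q-p}, Proposition~\ref{+1}): an element of $M(v)$ need not be $P_v$-positive, but a sufficiently high power of it is $P_v$ strictly positive.

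The paper therefore does not reduce to $P_v$-positive representatives at all. It fixes $f=\delta_{M_nmM_n}$ with $m$ in a fixed double coset governed by a standard triple, forms a representative $\tilde f$ of $\bar i_v(f)$, and invokes Casselman's trick \cite[Corollary 4.2]{Ca}: since both sides of the adjunction formula are traces of operators on finite-dimensional fixed-vector spaces, it suffices to show $\Tr^M_R(f^l,r_v(\pi))=\Tr^G_R(\tilde f^l,\pi)$ for $l\gg0$. Quasi-positivity then guarantees that for $l$ large the support of $f^l$ consists of $P_v$ strictly positive elements (\S\ref{a-b}); one proves $\tilde f^l\equiv j_{v,n}(f^l)\bmod[H,H]$ using the compatibility of the maps $j_{v,n}$ at different congruence levels on the strictly positive part (Corollary~\ref{str+}), and Bushnell's adjunction (Proposition~\ref{Bu-adj}) finishes. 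Your Fitting-decomposition argument in the positive case is Casselman's trick in disguise; the paper applies that same mechanism to $f$ itself, which handles all $m\in M(v)$ uniformly and makes the problematic rewriting step unnecessary.
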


\subsection{} Now we discuss some applications. In \cite{hecke-1}, we introduced the rigid cocenter $\bar H^\rig=\oplus \bar H(v)$, where $v$ runs over rational central coweights. 

Now for any standard Levi subgroup $M$, we introduce the $+$-rigid part $\bar H(M)^{+, \rig}=\oplus \bar H(M; v)$, where $v$ runs over rational dominant coweights with $M=M_v$. We then have the well-defined map $$\bar i_M^+=\oplus_v \bar i_v: \bar H(M)^{+, \rig} \to \bar H.$$ As an application of Theorem \ref{thmA} and the Newton decomposition of $
\bar H$ (see \cite[Theorem 3.1]{hecke-1}), we have 

\begin{theoremA}\label{thmC}
We have the decomposition of the cocenter $\bar H$ into $+$-rigid parts: $$\bar H=\oplus_{M \text{ is a standard Levi subgroup}} \,  \bar i_M^+(\bar H(M)^{+, \rig}).$$
\end{theoremA}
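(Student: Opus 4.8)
The plan is to deduce Theorem \ref{thmC} by combining the Newton decomposition of $\bar H$ from \cite[Theorem 3.1]{hecke-1} with the surjectivity statement of Theorem \ref{thmA}(2), organizing the Newton strata according to which standard Levi subgroup they ``come from''. Recall that the Newton decomposition is $\bar H = \oplus_{v} \bar H(v)$, where $v$ ranges over $G$-dominant rational coweights. Given such a $v$, there is a canonical standard Levi subgroup attached to it, namely $M = M_v$, the centralizer of $v$ (equivalently, the Levi for which $v$ is central and dominant). So the first step is to reindex: partition the set of $G$-dominant rational coweights $v$ according to the standard Levi $M_v$, giving $\bar H = \oplus_M \bigl( \oplus_{v : M_v = M} \bar H(v) \bigr)$, the inner sum being over $G$-dominant rational coweights $v$ that are central in $M$ and whose centralizer is exactly $M$.

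Next I would match this inner sum with the image of $\bar i_M^+$. By definition, $\bar H(M)^{+,\rig} = \oplus_v \bar H(M; v)$ where $v$ runs over rational dominant (for $G$, but this should be interpreted correctly) coweights with $M = M_v$; such $v$ are automatically central in $M$, so $\bar H(M; v)$ is a piece of the rigid cocenter of $M$ — this is why the ``$+$-rigid'' terminology is used. For each such $v$, Theorem \ref{thmA}(2) tells us that $\bar i_v$ maps $\bar H(M; v)$ onto $\bar H(G; \bar v)$, where $\bar v$ is the $G$-dominant coweight associated to $v$; but since $v$ is already $G$-dominant (being central in $M = M_v$ and dominant), we have $\bar v = v$, so the image of $\bar i_v$ is exactly $\bar H(v)$. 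Hence $\bar i_M^+(\bar H(M)^{+,\rig}) = \sum_{v : M_v = M} \bar H(v)$, and since these are among the summands of the Newton decomposition of $\bar H$, this sum is direct and equals $\oplus_{v : M_v = M} \bar H(v)$.

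Assembling the two steps: $\bar H = \oplus_v \bar H(v) = \oplus_M \bigl( \oplus_{v : M_v = M} \bar H(v) \bigr) = \oplus_M \bar i_M^+(\bar H(M)^{+,\rig})$, which is the claimed decomposition. The directness of the outer sum over $M$ is inherited directly from the directness of the Newton decomposition, since the index sets $\{v : M_v = M\}$ for distinct $M$ are disjoint and together exhaust all $G$-dominant rational coweights.

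The main point requiring care — and the one potential obstacle — is the bookkeeping that every $G$-dominant rational coweight $v$ arises as $M_v$-central for a unique standard Levi $M$, together with the compatibility of the two different notions of ``dominant'' in play (dominant for $G$ versus dominant within $M$): one must check that the $v$'s indexing $\bar H(M)^{+,\rig}$ are precisely the $G$-dominant ones with centralizer $M$, so that $\bar v = v$ and no Newton stratum is either double-counted or omitted. Once this indexing is pinned down, the theorem is a formal consequence of Theorem \ref{thmA}(2) and \cite[Theorem 3.1]{hecke-1}; there is no further analytic or algebraic input needed.
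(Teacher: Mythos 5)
Your proposal is correct and follows exactly the route the paper has in mind: Theorem~\ref{thmC} is stated as a direct application of the Newton decomposition of $\bar H$ from \cite[Theorem 3.1]{hecke-1} together with the surjectivity in Theorem~\ref{thmA}(2), and your argument supplies precisely that bookkeeping (partition $V_+$ by $M_v$, observe $\bar v = v$ for $G$-dominant $v$, and apply $\bar i_v \colon \bar H(M;v) \twoheadrightarrow \bar H(v)$). The only further detail one might add is that the directness within each block $\bar i_M^+(\bar H(M)^{+,\rig})$ uses, in addition to surjectivity, Corollary~\ref{in-G-nu} (the image of $\bar i_v$ lands in $\bar H(\bar v)$), so that the images of the different $\bar i_v$ really do sit in distinct Newton summands — but you in effect noted this when identifying the image as exactly $\bar H(v)$.
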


For affine Hecke algebras, such decomposition is first obtained in \cite{HN2} via an elaborate analysis on the minimal length elements in the affine Weyl groups of $G$ and its Levi subgroups $M$. In loc.cit., such decomposition is called the Bernstein-Lusztig presentation of the cocenter of affine Hecke algebras, since the explicit expression of $\bar i_M^+$ there is given in terms of the Bernstein-Lusztig presentation. Although there is no Bernstein-Lusztig type presentation for $H$, we follow \cite{HN2} and still call the decomposition in Theorem \ref{thmC} the Bernstein-Lusztig presentation of the cocenter $\bar H$. It is also worth mentioning that the proof in this paper does not involve the elaborate analysis on the minimal length elements as in \cite{HN2},  but based on the compatibility between the change of different open compact subgroups $\CK$ of $G$. 

Theorem \ref{thmC} asserts that the rigid cocenters of Levi subgroups form the ``building blocks'' of the whole cocenter $\bar H$. We also show that that they are compatible with the trace map in the following way. 

\begin{theoremA}[Theorem \ref{newton-ker}] \label{thmD}
Let $R$ be an algebraically closed field of characteristic not equal to $p$. Then we have $$\ker \Tr_R=\oplus_{M \text{ is a standard Levi subgroup}} \,  \bar i_M^+(\ker \Tr^M_R \cap \bar H_R(M)^{+, \rig}).$$
\end{theoremA}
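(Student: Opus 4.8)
The plan is to handle the two inclusions separately. The inclusion $\supseteq$ is formal, given Theorem~\ref{thmB}. Fix a standard Levi $M$ and $f\in\ker\Tr^M_R\cap\bar H_R(M)^{+,\rig}$, and decompose $f=\sum_v f_v$ along $\bar H_R(M)^{+,\rig}=\oplus_v\bar H_R(M;v)$, the sum over $G$-dominant rational coweights $v$ with $M_v=M$. For each such $v$ the parabolic $P_v=MN_v$ is the standard parabolic with Levi factor $M$, since the roots $\alpha$ with $\langle\alpha,v\rangle>0$ are exactly the positive roots outside $M$, independently of $v$; hence $r_{v,R}=r_{M,R}$ for all of them. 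So, by Theorem~\ref{thmB}, for any $\pi\in\fkR(G)_R$,
$$\Tr^G_R(\bar i_M^+(f),\pi)=\sum_v\Tr^M_R(f_v,r_{M,R}(\pi))=\Tr^M_R(f,r_{M,R}(\pi))=0,$$
so $\bar i_M^+(f)\in\ker\Tr_R$, and summing over $M$ gives $\supseteq$.

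For $\subseteq$ I would argue by induction on the semisimple rank of $\BG$, the torus case being vacuous. Let $h\in\ker\Tr_R$ and write $h=\sum_M\bar i_M^+(f_M)$ with $f_M\in\bar H_R(M)^{+,\rig}$ by Theorem~\ref{thmC}; the goal is that each $f_M\in\ker\Tr^M_R$. For a proper standard Levi $M$, the restriction map $\bar r_{M,R}\colon\bar H_R(G)\to\bar H_R(M)$ (Van Dijk's formula) is adjoint to parabolic induction $i_{M,R}$, so $\Tr^M_R(\bar r_{M,R}(h),\sigma)=\Tr^G_R(h,i_{M,R}(\sigma))=0$ for every $\sigma\in\fkR(M)_R$, i.e.\ $\bar r_{M,R}(h)\in\ker\Tr^M_R$. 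Next, applying the inductive hypothesis (Theorem~\ref{thmD} to $M$ and its Levi subgroups) together with Theorem~\ref{thmA}(2), one gets that $\ker\Tr^M_R$ is graded for the Newton decomposition of $\bar H_R(M)$; in particular the component of $\bar r_{M,R}(h)$ in $\bar H_R(M)^{+,\rig}$ still lies in $\ker\Tr^M_R$. It then remains to identify that component with $f_M$. Granting this, $f_M\in\ker\Tr^M_R$ for all proper $M$; then $\sum_{M\subsetneq G}\bar i_M^+(f_M)\in\ker\Tr_R$ by the direction $\supseteq$ already proved, and so $f_G=h-\sum_{M\subsetneq G}\bar i_M^+(f_M)$ lies in $\ker\Tr_R\cap\bar H_R(G)^{+,\rig}=\ker\Tr^G_R\cap\bar H_R(G)^{+,\rig}$, which closes the induction.

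The hard part is the last claim, that the $\bar H_R(M)^{+,\rig}$-component of $\bar r_{M,R}(h)$ equals $f_M$; I expect it to follow from arguments parallel to the proof of Theorem~\ref{thmC}. What one needs is: $\bar r_{M,R}\circ\bar i_{M'}^+$ maps $\bar H_R(M')^{+,\rig}$ into the complement of $\bar H_R(M)^{+,\rig}$ in the Bernstein--Lusztig decomposition of $\bar H_R(M)$ whenever $M'\ne M$, while $\bar r_{M,R}\circ\bar i_M^+$ is the identity on $\bar H_R(M)^{+,\rig}$ modulo that complement. On the representation side, Bernstein's geometric lemma expands $r_{M',R}\circ i_{M,R}$ as a sum over $W_{M'}\backslash W/W_M$ of parabolic inductions composed with twisted Jacquet functors; only the trivial double coset can give a summand that is neither a proper Jacquet module nor induced from a proper Levi of $M$, and that summand requires $M'=M$. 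Dualizing, and noting that restriction to---and induction from---a proper Levi of $M$ lands in Newton strata $\bar H_R(M;w)$ with $M_w\ne M$, hence outside $\bar H_R(M)^{+,\rig}$, one extracts the stated behaviour; here the strict positivity of $v$ relative to $N_v$, the feature that makes $\bar i_v$ well defined in Theorem~\ref{thmA}(1), is what pins the surviving diagonal term down to $\bar i_v$ of the given class. A cleaner, induction-free route would write each projector $\bar H_R\twoheadrightarrow\bar i_M^+(\bar H_R(M)^{+,\rig})$ of the Bernstein--Lusztig decomposition as an alternating sum of composites $\bar i_{M'}^+\circ\mathrm{pr}\circ\bar r_{M',R}$; since $\bar i_{M'}^+$ sends $\ker\Tr^{M'}_R$ into $\ker\Tr_R$ and $\bar r_{M',R}$ sends $\ker\Tr_R$ into $\ker\Tr^{M'}_R$, such projectors preserve $\ker\Tr_R$ and $\subseteq$ is immediate---but the operator identity itself rests on the same geometric-lemma input. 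Finally, when $\mathrm{char}\,R=0$ both sides vanish by Kazhdan's density theorem, so the real content is the modular case $\mathrm{char}\,R=\ell>0$, and the argument sketched here is uniform in $R$.
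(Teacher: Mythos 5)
Your $\supseteq$ direction is correct and is essentially the content of the Corollary following Theorem~\ref{newton-ker} in the paper: it follows formally from the adjunction in Theorem~\ref{thmB} once one notes that $P_v$ is the standard parabolic with Levi $M$ whenever $v$ is $G$-dominant with $M_v=M$.

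For $\subseteq$, your route is genuinely different from the paper's, and you have correctly identified where the real work lies. The paper does \emph{not} proceed by induction on semisimple rank, nor does it use a cocenter-side expansion of $\bar r_M\circ\bar i_{M'}^+$. Instead it twists by unramified characters $\chi\in\Psi(M)_R$ and analyzes the algebraic function $\chi\mapsto\Tr^G_R(f,i_{M,R}(\sigma\circ\chi))$: Theorem~\ref{newton-ker} uses the Mackey formula on the \emph{representation} side $r_{M,R}\circ i_{M,R}(\sigma\circ\chi)=\sum_{w\in{}^MW^M}i^M_{M\cap{}^wM,R}(\dot w\circ r^M_{M\cap{}^{w^{-1}}M,R}(\sigma)\circ{}^{\dot w}\chi)$ and extracts the ``positive part'' in $\chi$, which isolates the $w=1$ summand exactly because $w(v)$ is dominant only for $w=1$ when $M=M_v$; and Theorem~\ref{newton-ker1} obtains the Newton-grading of $\ker\Tr_R$ by picking a \emph{minimal} Levi $M$ carrying a nonzero component and comparing orders of regularity of the various terms in $\Psi(M)_R$, following the argument of \cite{CH}. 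Both steps exploit holomorphicity in the unramified twist; neither requires a second adjointness-type identity on cocenters.

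The step you flag as ``the hard part'' is a genuine gap, not merely a routine detail: to make it work, you would need to prove (i) that $\bar r_{M,R}\circ\bar i_{M'}^{+}$ for $M'\neq M$ lands in the complement of $\bar H_R(M)^{+,\mathrm{rig}}$, and (ii) that $\bar r_{M,R}\circ\bar i_{M}^{+}$ is the identity on $\bar H_R(M)^{+,\mathrm{rig}}$ modulo that complement. Point (i) is delicate for $M\subsetneq M'$, where the $w=1$ term of the dual geometric lemma is literally $\bar r^{M'}_{M,R}$ restricted to the $M'$-central Newton strata; one has to verify that Van Dijk restriction transports $M'$-central Newton components to $M$-Newton components whose $M_v$ (in the $G$-sense) is strictly larger than $M$, hence outside $\bar H_R(M)^{+,\mathrm{rig}}$. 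This is plausible (it works in small rank) but is not established anywhere in the paper, which avoids it precisely by staying on the representation side where the geometric lemma is available off the shelf. Likewise, an inductive appeal to the Newton-grading of $\ker\Tr^M_R$ presupposes the analogue of Theorem~\ref{newton-ker1} for $M$, which in the paper is proved directly and is a nontrivial statement (the Remark after Theorem~\ref{newton-ker1} shows the naive $\aleph$-grading of the kernel can fail). In short: your high-level skeleton of the $\subseteq$ direction could probably be pushed through, but the unfilled step needs a real argument about how $\bar r_M$ interacts with Newton strata, whereas the paper sidesteps this entirely via the $\Psi(M)_R$-regularity argument.

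Two small corrections: the final remark that ``when $\mathrm{char}\,R=0$ both sides vanish'' is only right because Kazhdan's density theorem makes both sides $0$; and the base case for a torus is not ``vacuous'' but rather trivially true because $\bar H_R(T)^{+,\mathrm{rig}}=\bar H_R(T)$ and $\bar i_T^+$ is the identity.
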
  

If $R=\BC$, we have the spectral density theorem and the kernel of the trace map is zero. Theorem \ref{thmD} is trivial in this case. However, if $R$ is of positive characteristic, especially when the spectral density theorem fails, then Theorem \ref{thmD} would provide useful information toward the understanding of those representations. 

\subsection{} The outline of the proof is as follows. In \S\ref{2}, we introduce the notion of quasi-positive elements and we use some remarkable properties on the minimal length elements established in \cite{HN1} to show that any element in the Newton stratum $M(v)$ is quasi-positive. Then in \S\ref{3}, we use the quasi-positivity to show that the map in Theorem \ref{thmA} (1) is well-defined and factors through $\bar H(M; v)$. This proves part (1) of Theorem \ref{thmA}. 

As to part (2) of Theorem \ref{thmA}, we first prove in Proposition \ref{M-G-nu} that $M(v) \subset G(\bar v)$. Then by the admissibility of Newton strata (\cite[Theorem 3.2]{hecke-1}), any open compact subset $X$ of $M(v)$ enlarged by a sufficiently small open compact subgroup is still contained in $G(\bar v)$. This shows that the image of $\bar i_v$ is contained in $\bar H(G; \bar v)$. The key ingredients in the proof of surjectivity are 
\begin{itemize}
\item The notation of $P$-alcove elements introduced in \cite{GHKR}. 

\item The Iwahori-Matsumoto presentation of $\bar H(G; \bar v)$ (\cite[Theorem 4.1]{hecke-1}). 
\end{itemize}

By the quasi-positivity, for any $f \in H(M; v)$, $f^l \in H^v(M)$ for sufficiently large $l$. Theorem \ref{thmB} follows from the adjunction formula proved in \cite{Bu}, the comparison between $i_v(f)^l$ with $j_{v, *}(f^l)$ and a trick of Casselman \cite{Ca}. 

Finally, the injectivity in part (3) of Theorem \ref{thmA} follows from the adjunction formula (Theorem \ref{thmB}), the spectral density theorem and the freeness of the cocenter $\bar H$ (which is only known in the case of $\text{char}(F)=0$). 

\subsection{Acknowledgment} We thank Dan Ciubotaru and Sian Nie for many enjoyable discussions and valuable suggestions. We also thank Guy Henniart and Marie-France Vign\'eras for discussions on the freeness of cocenter and thank Maarten Solleveld for his useful comments on a preliminary version of the paper. 

\section{Preliminary}

\subsection{} Let $\BG$ be a connected reductive group over a nonarchimedean local field $F$ of arbitrary characteristic. Let $G=\BG(F)$. We fix a maximal $F$-split torus $A$ and an alcove $\fka_C$ in the corresponding apartment, and denote by $\CI$ the associated Iwahori subgroup. 

Let $Z=Z_G(A)$. We denote by $W_0=N_G A(F)/Z(F)$ the {\it relative Weyl group} and $\tW=N_G A(F)/Z_0$ the {\it Iwahori-Weyl group}, where $Z_0$ is the unique parahoric subgroup of $Z(F)$. 

We fix a special vertex of $\fka_C$ and identify $\tW$ as $$\tW \cong X_*(Z)_{\Gal(\bar F/F)} \rtimes W_0=\{t^\l w; \l \in X_*(Z)_{\Gal(\bar F/F)}, w \in W_0\}.$$

We have a semidirect product
$$\tW=W_a \rtimes \Omega,$$ where $W_a$ is the affine Weyl group associated to $\tW$ and $\Omega$ is the stabilizer of the alcove $\fka_C$ in $\tW$. Let $\tilde \BS$ be the set of affine simple reflections of $W_a$ determined by the fundamental alcove $\fka_C$. The groups $W_a$ and $\tW$ are equipped with a Bruhat order $\le$ and a length function $\ell$. The subgroup $\Omega$ of $\tW$ is the subgroup consisting of length-zero elements. 

\subsection{} For any $K \subset \tilde \BS$, let $W_K$ be the subgroup of $\tW$ generated by $s \in K$. Let ${}^K \tW$ be the set of elements $w \in \tW$ of minimal length in the cosets $W_K w$. 

Let $\Phi=\Phi(G, A)$ be the set of roots of $G$ relative to $A$ and $\Phi^+$ be the set of positive roots so that $\fka_C$ is contained in the antidominant chamber of $V$ determined by $\Phi^+$. Let $\sR=\{\a\}$ be the set of affine roots on $\sA$. We choose a normalization of the valuation on $F$ so that if $\a \in \sR$, then so is $\a \pm 1$ (see \cite[\S 5.2.23]{BT}). For any $n \in \BN$, let $\CI_n$ be the $n$-th Moy-Prasad subgroup associated to the barycenter of $\fka_C$ \cite{MP}. This is the subgroup of $G$ generated by the $n$-th congruence subgroup of $Z(F)$ and the affine root subgroup $X_{\a+n}$ for $\a \in \sR_+$. 

For any $n \in \BN$ and a subgroup $G'$ of $G$, we set $G'_n=G' \cap \CI_n$. We write $\CI_{G'}$ for $G' \cap \CI$. 

\subsection{} Let $\mu_G$ be the Haar measure on $G$ such that the pro-p Iwahori subgroup $\CI'$ has volume $1$. As in \cite[Section 1]{hecke-1}, we denote by $H=H(G)$ the Hecke algebra of locally constant, compactly supported $\BZ[\frac{1}{p}]$-valued functions on $G$. We have $$H=\varinjlim\limits_{\CK} H(G, \CK),$$ where $\CK$ runs over open compact subgroups of $G$ and $H(G, \CK)$ is the space of compactly supported, $\CK \times \CK$-invariant $\BZ[\frac{1}{p}]$-valued functions on $G$, i.e., $H(G, \CK)=\oplus_{g \in \CK \backslash G/\CK} \BZ[\frac{1}{p}] \d_{\CK g \CK}$, where $\d_{\CK g \CK}$ is the characteristic function on $\CK g \CK$. 

We define the action of $G$ on $H$ by ${}^x f(g)=f(x \i g x)$ for $f \in H$, $x, g \in G$. By \cite[Proposition 1.1]{hecke-1}, the commutator $[H, H]$ of $H$ is the $\BZ[\frac{1}{p}]$-submodule of $H$ spanned by $f-{}^x f$ for $f \in H$ and $x \in G$. Let $\bar H=H/[H, H]$ be the cocenter of $H$. 

\subsection{} Now we recall the Newton decomposition introduced in \cite{hecke-1}.

Set $V=X_*(Z)_{\Gal(\bar F/F)} \otimes \BR$ and $V_+$ be the set of dominant elements in $V$. For any $w \in \tW$, there exists a positive integer $l$ such that $w^l=t^\l$ for some $\l \in X_*(Z)_{\Gal(\bar F/F)}$. We set $\nu_w=\l/l \in V$ and $\bar \nu_w$ to be the unique dominant in the $W_0$-orbit of $\nu_w$. The element $\nu_w$ and $\bar \nu_w$ are independent of the choice of $l$. 

Let $\aleph=\Omega \times V_+$. We have a map (see \cite[\S 2.1]{hecke-1}) $$\pi=(\k, \bar \nu): \tW \to \aleph, \qquad w \mapsto (w W_a, \bar \nu_w).$$  

We denote by $\tW_{\min}$ be the subset of $\tW$ consisting of elements of minimal length in their conjugacy classes. For any $\nu \in \aleph$, we set $$X_\nu=\cup_{w \in \tW_{\min}; \pi(w)=\nu} \CI \dot w \CI \quad \text{ and } \quad G(\nu)=G \cdot_\th X_\nu.$$ Here $\cdot$ means the conjugation action of $G$. Let $H(\nu)$ be the submodule of $H$ consisting of functions supported in $G(\nu)$ and let $\bar H(\nu)$ be the image of $H(\nu)$ in the cocenter $\bar H$. The Newton decomposition of $\bar H$ is established in \cite[Theorem 3.1 (2)]{hecke-1}.

\begin{theorem}\label{newton-h}
We have that $$\bar H=\bigoplus_{\nu \in \aleph} \bar H(\nu).$$
\end{theorem}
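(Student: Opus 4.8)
The plan is to reduce everything to a decomposition of the group itself: I claim that
$$G=\bigsqcup_{\nu\in\aleph}G(\nu)$$
is a partition of $G$ into subsets that are open, closed and stable under conjugation, and that this group-level statement yields the theorem by a formal argument. Conjugation-stability of $G(\nu)=G\cdot_\th X_\nu$ is immediate from the definition, and each $G(\nu)$ is open because $X_\nu$ is a union of Iwahori double cosets; being a partition of $G$ into open sets, each $G(\nu)$ is then also closed. To see that the $G(\nu)$ are pairwise disjoint I would construct a conjugation-invariant map $\phi\colon G\to\aleph$ — with $\Omega$-component the Kottwitz / parahoric-reduction homomorphism and $V_+$-component the Newton point of an element of $G$ — and show that $\phi$ is constant on $\CI\dot w\CI$ with value $\pi(w)$ whenever $w\in\tW_{\min}$; this is the point where the structure of minimal length elements of $\tW$ and the $P$-alcove geometry of \cite{GHKR} enter. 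For the covering I would show, by induction on $\ell(w)$, that each $\CI\dot w\CI$ lies in a finite union of strata: if $w\in\tW_{\min}$ then $\phi\equiv\pi(w)$ on $\CI\dot w\CI$ gives $\CI\dot w\CI\subseteq G(\pi(w))$; otherwise, by the reduction theory of \cite{HN1} and its Iwahori-level refinement — any element of $\CI\dot w\CI$ can be conjugated by an element of $\CI\dot s\CI$ into one of finitely many double cosets of strictly smaller length — the inductive hypothesis applies. Since every $g\in G$ lies in some $\CI\dot w\CI$ this gives $\bigsqcup_\nu G(\nu)=G$, and since a compact subset of $G$ is covered by finitely many Iwahori double cosets, each contained in finitely many strata, only finitely many $G(\nu)$ meet any given compact set.

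Granting the group decomposition, the theorem is formal. Let $H(\nu)\subseteq H$ be the subspace of functions supported on $G(\nu)$. As the $G(\nu)$ are open, closed, pairwise disjoint, and only finitely many of them meet any given compact subset of $G$, every $f\in H$ is the finite sum $f=\sum_\nu f|_{G(\nu)}$ of functions with pairwise disjoint supports — each $f|_{G(\nu)}$, extended by zero, is again locally constant and compactly supported, hence in $H$ — so $H=\bigoplus_\nu H(\nu)$. Since $G(\nu)$ is conjugation-stable, $H(\nu)$ is stable under the conjugation action of $G$: if $\supp(f)\subseteq G(\nu)$ then $\supp({}^xf)=x\,\supp(f)\,x^{-1}\subseteq G(\nu)$. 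Hence, writing $f=\sum_\nu f_\nu$ with $f_\nu\in H(\nu)$, we have $f-{}^xf=\sum_\nu(f_\nu-{}^xf_\nu)$ with each term $f_\nu-{}^xf_\nu$ lying in $H(\nu)$. By \cite[Proposition 1.1]{hecke-1} the commutator $[H,H]$ is spanned by the elements $f-{}^xf$, so this grading gives
$$[H,H]=\bigoplus_\nu\bigl([H,H]\cap H(\nu)\bigr),$$
and therefore
$$\bar H=H/[H,H]=\bigoplus_\nu H(\nu)/\bigl(H(\nu)\cap[H,H]\bigr)=\bigoplus_\nu\bar H(\nu).$$

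The main obstacle is the group decomposition, and within it the two genuinely $p$-adic inputs: (i) that the Newton point is a well-defined conjugation invariant on $G$ which is constant on $\CI\dot w\CI$ for $w$ of minimal length — this requires the fine structure of $\tW_{\min}$ and the $P$-alcove estimates of \cite{GHKR}; and (ii) the Iwahori-level reduction that an element of a non-minimal double coset can be conjugated into a strictly shorter one, the analogue for $G$ of the reduction theorem of \cite{HN1}. Once these are available nothing further is needed, since the passage to the cocenter uses only that $[H,H]$ is generated by the differences $f-{}^xf$, which automatically respect any partition of $G$ into conjugation-stable open-closed pieces.
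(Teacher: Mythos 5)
The paper itself does not prove this theorem; it is quoted verbatim from \cite[Theorem~3.1(2)]{hecke-1}, and the other citations of \cite{hecke-1} in this paper make clear that the proof there has exactly the two-stage structure you propose: first the group-level Newton decomposition $G=\bigsqcup_{\nu\in\aleph}G(\nu)$ (\cite[Theorem~2.1]{hecke-1}), then the formal passage $H=\bigoplus_\nu H(\nu)$, $[H,H]=\bigoplus_\nu\bigl([H,H]\cap H(\nu)\bigr)$ (\cite[\S3.3(2)]{hecke-1}), and hence $\bar H=\bigoplus_\nu\bar H(\nu)$. Your formal half is carried out correctly: each $G(\nu)=G\cdot X_\nu$ is a union of conjugates of the open set $X_\nu$ and so open; granted the partition it is then also closed; hence $\mathbf{1}_{G(\nu)}$ is locally constant, $f\mapsto f\cdot\mathbf{1}_{G(\nu)}$ preserves $H$, the sum over $\nu$ is finite by compactness, and conjugation-stability of $G(\nu)$ makes the spanning set $f-{}^xf$ of $[H,H]$ decompose componentwise. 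The only place I would push back is your sketch of disjointness via ``a conjugation-invariant map $\phi\colon G\to\aleph$ whose $V_+$-component is the Newton point of an element of $G$'': over a $p$-adic field there is no off-the-shelf Newton-point invariant on all of $G$ (elements need not be semisimple, and isocrystal-style Newton theory is unavailable), so defining such a $\phi$ directly is not easier than the theorem itself. In the source material (and in the closely analogous proof of Proposition~\ref{M-G-nu} in this paper) pairwise disjointness is instead established by an intersection argument: assume $g h g^{-1}=h'$ with $h\in\CI\dot u\dot x\CI$, $h'\in\CI\dot u'\dot x'\CI$ for standard triples of distinct $\pi$-value, take high powers and use \S\ref{basic-min}(c) to push the comparison into translation parts, and extract a contradiction in $\tW$ for $n\gg 0$. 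This achieves the same end as your $\phi$ but avoids inventing an a priori invariant. With that substitution your argument is the argument.
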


In this paper, we are mainly interested in the $V$-factor of $\aleph$. For any $v \in V_+$, we also set $G(v)=\sqcup_{\nu=(\t, v) \text{ for some }\t \in \Omega} G(\nu)$, $H(v)=\oplus_{\nu=(\t, v) \text{ for some }\t \in \Omega} H(\nu)$ and $\bar H(v)=\oplus_{\nu=(\t, v) \text{ for some }\t \in \Omega} \bar H(\nu)$. 

\subsection{} Let $M$ be a semistandard Levi subgroup of $G$, i.e., a Levi subgroup of some parabolic subgroup of $G$ that contains $Z$. Let $\CI_M=\CI \cap M$ be the Iwahori subgroup of $M$ and $\tW(M)$ be the Iwahori-Weyl group of $M$. We denote by $\tilde \BS(M)$ the set of affine simple reflections of $\tW(M)$ determined by the Iwahori subgroup $\CI_M$. 

We may regard $\tW(M)$ as a subgroup of $\tW$ in a natural way. However, the length function $\ell_M$ on $\tW(M)$ does not equal to the restriction ot $\tW$ of the length function $\ell$ on $\tW$. 

Let $\Omega_M$ be the subgroup of $\tW(M)$ consisting of length-zero elements with respect to the length function $\ell_M$. We have $\Omega_M \cong \tW(M)/W_a(M)$, where $W_a(M)$ is the affine Weyl group of the subgroup of $\tW(M)$. We have $W_a(M) \subset W_a$ and thus a natural map $\Omega_M \cong \tW(M)/W_a(M) \to \tW/W_a \cong \Omega.$ Let $V_+^M$ be the set of $M$-dominant elements in $V$. We set $\aleph_M=\Omega_M \times V_+^M$ and we have a map $\pi_M=(\k_M, \bar \nu_M): \tW(M) \to \aleph_M.$

We also have a natural map $\aleph_M \to \aleph$ sending $(\t, v)$ to $(\t', \bar v)$, where $\t'$ is the image of $\t$ in $\Omega$ and $\bar v$ is the unique ($G$-)dominant element in the $W_0$-orbit of $v$. 

Let $\mu_M$ be the Haar measure on $M$ such that the pro-p Iwahori subgroup of $M$ has volume $1$. Let $H(M)$ be the Hecke algebra of $M$ and $\bar H(M)$ be its cocenter. For any $\nu_M \in \aleph_M$, we denote by $\bar H(M; \nu_M)$ the corresponding Newton component of $\bar H(M)$. By Theorem \ref{newton-h}, we have $$\bar H(M)=\oplus_{\nu_M \in \aleph_M} \bar H(M; \nu_M).$$

\section{Quasi-positive elements}\label{2}

\subsection{} The semistandard Levi may be described as the centralizer of elements in $V$. For any $v \in V$, we set $\Phi_{v, 0}=\{a \in \Phi; \<a, v\>=0\}$ and $\Phi_{v, +}=\{a \in \Phi; \<a, v\>>0\}$. Let $M_v \subset G$ be the Levi subgroup generated by $Z$ and $U_a(F)$ for $a \in \Phi_{v, 0}$ and $N_v \subset G$ be the unipotent subgroup generated by $U_a(F)$ for $a \in \Phi_{v, +}$. Set $P_v=M_v N_v$. Then $P_v$ is a semistandard parabolic subgroup and $M_v$ is a Levi subgroup of $P_v$. We denote by $P^-_v=M_v N^-_v$ the opposite parabolic. Let $\mu_{N_v}$, $\mu_{N_v^-}$ be the Haar measures on $N_v$ and $N_v^-$ respectively such that $\mu_G(n m n^-)=\mu_{N_v}(n) \mu_{M_v}(m) \mu_{N_v^-}(n^-)$ for $n \in N_v, m \in M_v, n^- \in N_v^-$. For $m \in M_v$, set $\d_v(m)=\frac{\mu_{N_v} (m N_{v, 0} m \i) }{\mu_{N_v}(N_{v, 0})}$. For $\nu=(\t, v) \in \aleph$, we may also write $M_\nu$ for $M_v$, $N(\nu)$ for $N_v$ and $N^-(\nu)$ for $N^-_v$. 

If $v$ is dominant, then $P_v$ is a standard parabolic subgroup of $G$ and $M_v$ is a standard Levi subgroup of $G$.

\subsection{} Let $v \in V$. Following \cite[Definition 6.5 \& Definition 6.14]{BK}, we call an element $m \in M_v$ a {\it $(P_v, \CI_n)$-positive} element if $$m N_{v, n} m \i \subset N_{v, n}, \text{ and } m \i N^-_{v, n} m \subset N^-_{v, n}.$$

We call an element $z$ in the center of $M_v$ a {\it strongly $P_v$-positive} element if the sequences $z^n N_{v, 0} z^{-n}, z^{-n} N^-_{v, 0} z^n$ both tend monotonically to $1$ as $n \to \infty$. 

Following \cite[\S 3.1]{Bu}, let $H^v(M_v, M_{v, n})$ be the subalgebra of $H(M_v, M_{v, n})$ of functions with support consisting of $(P_v, \CI_n)$-positive elements. The following result is proved in \cite[Proposition 5]{Bu}.

\begin{proposition}
The map $\d_{M_{v, n} m M_{v, n}} \mapsto \d_v(m)^{-\frac{1}{2}} \frac{\mu_{M_v} (M_{v, n})}{\mu_G(\CI_n)} \d_{\CI_n m \CI_n}$ defines an injective algebra homomorphism $$j_{v, n}: H^v(M_v, M_{v, n}) \hookrightarrow H(G, \CI_n).$$ 
\end{proposition}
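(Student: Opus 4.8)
The plan is to check, in turn, that the formula gives a well-defined linear map into $H(G, \CI_n)$, that it is multiplicative, and that it is injective. \emph{Well-definedness.} The algebra $H^v(M_v, M_{v, n})$ has $\BZ[\frac{1}{p}]$-basis the elements $\d_{M_{v, n} m M_{v, n}}$ with $m$ ranging over $(P_v, \CI_n)$-positive elements modulo $M_{v, n}$-double cosets, so it is enough that the right-hand side depend only on $M_{v, n} m M_{v, n}$. Now $\d_v$ is a homomorphism $M_v \to \BR_{>0}$, hence is trivial on the compact subgroup $M_{v, n}$, and $M_{v, n} \subseteq \CI_n$, so $\CI_n m \CI_n$ depends only on the $M_{v, n}$-double coset of $m$; that $j_{v, n}(\d_{M_{v, n} m M_{v, n}})$ lies in $H(G, \CI_n)$ is clear.

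\emph{Multiplicativity.} Products of $(P_v, \CI_n)$-positive elements are $(P_v, \CI_n)$-positive (directly from the definition), so $H^v(M_v, M_{v, n})$ is a subalgebra and it suffices to compare the two sides on $\d_{M_{v, n} m M_{v, n}} * \d_{M_{v, n} m' M_{v, n}}$ for positive $m, m'$. The key tool is the Iwahori factorization $\CI_n = N^-_{v, n} M_{v, n} N_{v, n}$, valid in every order because $\CI_n$ is a Moy--Prasad subgroup, together with the conjugated factorization of $m \CI_n m \i$, legitimate since $M_v$, $N_v$ and $N^-_v$ are normalized by $m \in M_v$. Because $m$ is positive we have $N^-_{v, n} \subseteq m N^-_{v, n} m \i$ and $m N_{v, n} m \i \subseteq N_{v, n}$, so the standard intersection lemma for subgroups with compatible Iwahori factorizations gives
\[
\CI_n \cap m \CI_n m \i \;=\; N^-_{v, n} \cdot \bigl( M_{v, n} \cap m M_{v, n} m \i \bigr) \cdot m N_{v, n} m \i ,
\]
hence $\lvert \CI_n m \CI_n / \CI_n \rvert = [\CI_n : \CI_n \cap m \CI_n m \i] = \lvert M_{v, n} m M_{v, n} / M_{v, n}\rvert \cdot [N_{v, n} : m N_{v, n} m \i] = \d_v(m)\i \, \lvert M_{v, n} m M_{v, n} / M_{v, n}\rvert$, the index $[N_{v, n} : m N_{v, n} m \i]$ being $\d_v(m)\i$ because $\d_v$ is the modulus of $M_v$ acting on $N_v$. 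More precisely, the same factorization identifies, for positive $m$, the natural surjection $\CI_n m \CI_n / \CI_n \to M_{v, n} m M_{v, n} / M_{v, n}$ as a fibration with constant fibre of size $\d_v(m)\i$ running in the ``$N_v$-direction''; from this one reads off that for positive $m, m'$ the convolution in $H(G, \CI_n)$ of $\d_{\CI_n m \CI_n}$ with $\d_{\CI_n m' \CI_n}$ has the same coset combinatorics as the convolution in $H^v(M_v, M_{v, n})$ of $\d_{M_{v, n} m M_{v, n}}$ with $\d_{M_{v, n} m' M_{v, n}}$, once each basis vector is rescaled by $\d_v(m)^{-1/2} \, \mu_{M_v}(M_{v, n}) / \mu_G(\CI_n)$. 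The \emph{square root} of $\d_v$ (rather than $\d_v$) is forced by $\d_v(m)^{1/2} \d_v(m')^{1/2} = \d_v(m m')^{1/2}$ together with $\mu_G(\CI_n) = \mu_{N^-_v}(N^-_{v, n}) \, \mu_{M_v}(M_{v, n}) \, \mu_{N_v}(N_{v, n})$.

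\emph{Injectivity.} The map sends $\d_{M_{v, n} m M_{v, n}}$ to a \emph{nonzero} scalar times $\d_{\CI_n m \CI_n}$, so injectivity reduces to: distinct $(P_v, \CI_n)$-positive $M_{v, n}$-double cosets have distinct images in $\CI_n \backslash G / \CI_n$, i.e.\ $\CI_n m \CI_n \cap M_v = M_{v, n} m M_{v, n}$ for positive $m$. This is again obtained from the Iwahori factorization: writing an element of $\CI_n m \CI_n \cap M_v$ in factored form on each side and using positivity to transport the $N_v$- and $N^-_v$-parts across $m$ forces it into $M_{v, n} m M_{v, n}$. I expect the genuine work to be concentrated in the multiplicativity step — specifically the verification that the Iwahori factorization truly decouples the ``$M_v$-direction'' from the ``$N_v$-direction'' of $\CI_n m \CI_n$ for positive $m$, so that the $G$-convolution of two such basis vectors stays supported on positive double cosets with the same bookkeeping as in $M_v$, and that the multiplicative discrepancy is exactly $\d_v$ split evenly between the two arguments. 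This is the content of \cite[Proposition 5]{Bu}, whose argument the above is meant to reconstruct.
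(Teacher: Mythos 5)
The paper does not prove this proposition; it is quoted directly from Bushnell as \cite[Proposition 5]{Bu}, with the only remark being that the factor $\d_v(m)^{-1/2}$ has been inserted to switch from the unnormalized to the normalized Jacquet functor. So there is no in-paper proof to compare against, and your sketch is really a reconstruction of Bushnell's argument. In broad outline — Iwahori factorization of the Moy--Prasad subgroup $\CI_n = N^-_{v,n} M_{v,n} N_{v,n}$, positivity used to transport $N_{v,n}$ and $N^-_{v,n}$ across $m$, index count via $\CI_n \cap m\CI_n m^{-1}$, and separation of double cosets for injectivity — that is indeed the right shape of argument.

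However, your explanation of the $\d_v^{-1/2}$ factor is wrong, and it matters. You present the square root as ``forced'' by the Iwahori factorization together with $\d_v(m)^{1/2}\d_v(m')^{1/2}=\d_v(mm')^{1/2}$. But since $\d_v$ is a monoid homomorphism on the $P_v$-positive elements, \emph{every} power $\d_v^s$ satisfies the analogous identity, so this cannot single out $s=-\tfrac12$. What the Iwahori factorization and the counting $\lvert \CI_n m\CI_n/\CI_n\rvert = \d_v(m)^{-1}\lvert M_{v,n} m M_{v,n}/M_{v,n}\rvert$ actually force is the factor $\mu_{M_v}(M_{v,n})/\mu_G(\CI_n)$ — this is Bushnell's normalization, which makes the map an algebra homomorphism on the nose. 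The paper's $\d_v^{-1/2}$ is then a \emph{further} twist of Bushnell's $j$ by the character $\d_v^{-1/2}$; it preserves multiplicativity precisely because $\d_v$ is a homomorphism, and it is chosen, not forced, so that the adjunction in Proposition \ref{Bu-adj} lands on the \emph{normalized} Jacquet functor $r_{v,R}$. Your sketch conflates these two roles.

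A smaller but real gap: in the multiplicativity step you ``read off'' that the two convolutions have the same coset combinatorics from the index formula for a single $m$. That index equality alone does not establish that, for positive $m,m'$, the natural map from $M_{v,n}$-double cosets in $M_{v,n} m M_{v,n} m' M_{v,n}$ to $\CI_n$-double cosets in $\CI_n m \CI_n m' \CI_n$ is a bijection, nor that the structure constants correspond after rescaling. One needs to run the Iwahori-factorization argument \emph{inside} the product $\CI_n m \CI_n m' \CI_n$ (this is where positivity in both $m$ and $m'$ is actually used to push the $N_v$ and $N^-_v$ parts all the way through), and only then compare the convolution coefficients. That is the content of Bushnell's Proposition 5; as written your sketch announces the conclusion without performing this step.

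Finally, the injectivity claim $\CI_n m \CI_n \cap M_v = M_{v,n} m M_{v,n}$ for positive $m$ is stated correctly and is indeed the right criterion, but again it is a lemma that needs the same Iwahori-factorization transport argument made explicit; the one-sentence sketch is fine as an indication but not a proof.
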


The formula we have here differs from \cite{Bu} by the factor $\d_v(m)^{-\frac{1}{2}}$, since in \cite{Bu} the map is adjoint to the (unnormalized) Jacquet functor while we consider the (normalized) Jacquet functor. 

By \cite[\S 3.2]{Bu}, $H(M_v, M_{v, n})=S \i H^v(M_v, M_{v, n})$ is the localization of $H^v(M_v, M_{v, n})$, where $S=\<\d_{M_{v, n} z M_{v, n}}\>$ is the the multiplicative closed set of the function $\d_{M_{v, n} z M_{v, n}}$ with a strongly $P_v$-positive element $z$. It is pointed out in \cite[Remark 5]{Bu} that the map $j_{v, n}$ does not extend to an algebra homomorphism $H(M_v, M_{v, n}) \to H(G, \CI_n)$ for $n>0$. 

\subsection{} Let $v \in V$ be a rational coweight and $M=M_v$. For any $l \in \BN$ with $l v \in X_*(Z)$, the element $t^{l v}$ is strongly $P_v$-positive. However, in general, the element in $M(v)$ may not be $(P_v, *)$-positive. Therefore, one can not deduce a map from $\bar H(M; v)$ to $\bar H$ via the map $j_{v, n}$. 

\begin{example}
Let $G$ be split $GL_5$ and $M=GL_3 \times GL_2$. Let $v=(\frac{2}{3}, \frac{2}{3}, \frac{2}{3}, \frac{1}{2}, \frac{1}{2})$. Then $M=M_v$. The element $w=t^{(1, 1, 0, 1, 0)} (1 3 2) (4 5)$ of $\tW$ has Newton point $v$. However, $w(e_4-e_3)=e_5-e_2-1$ is a negative affine root. Therefore the element $\dot w$ is not $(P_v, *)$-positive. 
\end{example}

\subsection{}\label{q-p} To overcome the difficulty, we introduce the quasi-positive elements. 

An element $m \in M_v$ is called {\it $P_v$ quasi-positive} if there exists $l \in \BN$ such that \[\tag{a} m^l N_{v, n} m^{-l} \subset N_{v, n+1}, \text{ and } m^{-l} N^-_{v, n} m^l \subset N^-_{v, n+1} \text{ for any } n \in \BN.\]

For any $n \in \BN$, $w \in \tW$ and $g \in \CI \dot w \CI$,  we have $g \CI_{n+\ell(w)} g \i \subset \CI_n$. So

(b) Let $w \in \tW(M)$ and $m \in \CI_M \dot w \CI_M$. If $m$ satisfies (a), then we have \[m^n N_{v, n'+(l-1) \ell(w)} m^{-n} \subset N_{v, n'}, \text{ and } m^{-n} N^-_{v, n'+(l-1) \ell(w)} m^n \subset N^-_{v, n'} \text{ for any $n, n' \in \BN$}.\]

We first discuss some properties on the quasi-positive elements. 

\begin{proposition}\label{Levi-1}
Let $v \in V$ and $M=M_v$. Let $w \in \tW(M)$ and $m \in \CI_M \dot w \CI_M$. Suppose that $m$ satisfying the inclusion relation \S\ref{q-p} (a). 

(1) For any $n \in \BN$, any element in $m \CI_{n+(l-1) \ell(w)}$ is conjugate by an element in $\CI_n$ to an element in $m M_{n+(l-1) \ell(w)}$.  

(2) For any $n, n' \in \BN$ and $g \in \CI_{n+(l-1) \ell(w)}$, we have $$\d_{m g M_{n+(l-1) \ell(w)} \CI_{n+(l-1) \ell(w)+n'}} \equiv \d_{m M_{n+(l-1) \ell(w)} \CI_{n+(l-1) \ell(w)+n'}} \mod [H, H].$$
\end{proposition}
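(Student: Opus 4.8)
I would prove (1) by an Iwahori-factorization argument together with an iterated conjugation (``absorption'') procedure, and then deduce (2) from (1) using the presentation $[H,H]=\langle f-{}^x f\rangle$ of the commutator subspace. Throughout put $c=(l-1)\ell(w)$ and recall the Iwahori factorization $\CI_k=N^-_{v,k}\,M_k\,N_{v,k}$ for every $k\ge 0$. The tools are \S\ref{q-p}(a), \S\ref{q-p}(b), and the elementary fact that, since $m\in\CI\dot w\CI$ and $m^{-1}\in\CI\dot w^{-1}\CI$, conjugation by $m$ or $m^{-1}$ changes the depth of any $\CI_k$ by at most $\ell(w)$, whereas conjugation by $m^{l}$ (resp.\ $m^{-l}$) pushes $N_{v,\bullet}$ (resp.\ $N^-_{v,\bullet}$) strictly deeper.

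\textbf{Part (1).} Given $x\in m\CI_{n+c}$, I would write $x=m\,u^-a\,u$ with $u^-\in N^-_{v,n+c}$, $a\in M_{n+c}$, $u\in N_{v,n+c}$; all three factors lie in $\CI_n$ since $n+c\ge n$. First I would remove the $N_v$-factor $u$: conjugating $x$ by $u$ produces $m\,(m^{-1}um)\,u^-a$, where $m^{-1}um\in N_{v,\,n+c-\ell(w)}$ still has depth $\ge n$; re-commuting this $N_v$-factor back to the right past $u^-$ and $a$ introduces corrections only in strictly deeper Moy--Prasad subgroups, since $[N_{v,\bullet},N^-_{v,\bullet}]$ and $[N_{v,\bullet},M_\bullet]$ lie in $\CI_{\bullet+\bullet}$. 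Iterating, the $N_v$-part is pushed into $N_{v,\,n+c+k}$ for all $k$; since $\bigcap_k N_{v,\,n+c+k}=\{1\}$ and the conjugating elements can be arranged to converge to $1$ in $\CI_n$, their product is an element $\gamma\in\CI_n$ with $\gamma^{-1}x\gamma\in m\,N^-_{v,n+c}M_{n+c}$. Then I would run the symmetric argument, conjugating by elements of $N^-_{v,\ge n}$ and using the $m^{-l}$-contraction of $N^-_{v,\bullet}$ from \S\ref{q-p}(a),(b), to delete the $N^-_v$-factor as well, so that the resulting conjugate lies in $mM_{n+c}$.

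\textbf{Part (2).} Since $M_{n+c}$ normalizes $\CI_{n+c+n'}$, the set $\CK:=M_{n+c}\CI_{n+c+n'}$ is a group, and $\CK=\bigsqcup_i m_i\CI_{n+c+n'}$ where the $m_i$ are coset representatives of $M_{n+c}/M_{n+c+n'}$; hence $\d_{mgM_{n+c}\CI_{n+c+n'}}=\sum_i\d_{mgm_i\CI_{n+c+n'}}$, and likewise for $g=1$. Because $\CI_n$ normalizes the deeper subgroup $\CI_{n+c+n'}$, one has ${}^x\d_{h\CI_{n+c+n'}}=\d_{xhx^{-1}\CI_{n+c+n'}}$ for $x\in\CI_n$, and therefore $\d_{h\CI_{n+c+n'}}\equiv\d_{xhx^{-1}\CI_{n+c+n'}}\bmod [H,H]$. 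Now $gm_i\in\CI_{n+c}$, so by part (1) there is $\gamma_i\in\CI_n$ with $\gamma_i(mgm_i)\gamma_i^{-1}\in mM_{n+c}$; writing this element as $m m_{\sigma(i)}$ times an element of $M_{n+c+n'}\subseteq\CI_{n+c+n'}$, I obtain $\d_{mgm_i\CI_{n+c+n'}}\equiv\d_{m m_{\sigma(i)}\CI_{n+c+n'}}$. Summing over $i$ and checking that $\sigma$ permutes $M_{n+c}/M_{n+c+n'}$ --- which follows by comparing the $\mu_G$-masses of the two supports, the individual congruences being between single $\CI_{n+c+n'}$-cosets of equal volume --- gives (2).

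\textbf{What I expect to be hardest.} The crux is the depth bookkeeping in Part (1): one has to verify that the iterated conjugations strictly increase the Moy--Prasad depth of the unipotent ``defect'' while never letting it drop below $n$, which is precisely why the offset is $(l-1)\ell(w)$ and precisely where \S\ref{q-p}(b), rather than just \S\ref{q-p}(a), is used. In the special case $l=1$ the element $m$ is genuinely $(P_v,\CI_n)$-positive and the argument collapses to the standard Iwahori-factorization computations of \cite{BK}; the coset-matching in Part (2) is then routine.
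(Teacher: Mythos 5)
Your overall strategy (Iwahori factorization plus iterated conjugation, quasi-positivity controlling the depth) is the right one, and you correctly identified where the offset $(l-1)\ell(w)$ comes from; but the specific iteration you describe in Part (1) has a gap that makes it fail to converge. Conjugating $x=mu^-au$ by $u$ sends the $N_v$-defect $u\in N_{v,n+(l-1)\ell(w)}$ to $m^{-1}um$, and since $m\in\CI\dot w\CI$ the general length bound gives $m^{-1}um\in N_{v,\,n+(l-1)\ell(w)-\ell(w)}$: the depth has gone \emph{down} by $\ell(w)$, not up. Re-commuting this factor to the right past $u^-$ and $a$ does produce only ``deeper'' corrections, but the dominant new $N_v$-part is still $m^{-1}um$ itself, at depth $n+(l-1)\ell(w)-\ell(w)$. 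Repeating, the successive $N_v$-defects live in $N_{v,\,n+(l-1)\ell(w)-k\ell(w)}$ and the successive conjugators become shallower; they do not tend to $1$, and no convergent product $\gamma\in\CI_n$ exists.

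The paper's proof takes a subtly different route that fixes exactly this. After conjugating into the form $u'gu$ (with $u'\in N^-_{v,\,n+(l-1)\ell(w)+i}$ on the \emph{far left} of $g\in mM_{n+(l-1)\ell(w)}$), one writes $u'gu=u'(gug^{-1})g$, commutes $gug^{-1}\in N_{v,n+i}$ past $u'$, and then conjugates by $(gug^{-1})^{-1}$. Iterating produces $u'(g^ku g^{-k})\cdot(\text{deep})\cdot g$. The crucial point is that \S\ref{q-p}(b) guarantees that all the intermediate conjugators $(g^ku g^{-k})^{-1}$ for $k<l$ stay in $\CI_{n+i}$ — a uniform \emph{floor} — while \S\ref{q-p}(a) is invoked only at the $l$-th step to conclude $g^lug^{-l}\in N_{v,\,n+(l-1)\ell(w)+i+1}$, whereupon the defect is absorbed and the depth has strictly gained one unit. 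The gain occurs only after $l$ steps, not at each step, which is what the naive ``conjugate by the current defect'' iteration cannot see. Your version never applies the $l$-th-power contraction of \S\ref{q-p}(a) to the actual conjugators, and so never realizes this gain.

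For Part (2), once Part (1) is in place your coset decomposition is in the right spirit, but two further points need care: to deduce $\d_{mgm_i\CI_{n+(l-1)\ell(w)+n'}}\equiv\d_{mm_{\sigma(i)}\CI_{n+(l-1)\ell(w)+n'}}$ by conjugating with $\gamma_i$, you must know that $\gamma_i$ stabilizes the group $M_{n+(l-1)\ell(w)}\CI_{n+(l-1)\ell(w)+n'}$, which uses $(\CI_{n+n'},M_{n+(l-1)\ell(w)})\subset\CI_{n+n'+(l-1)\ell(w)}$ and thus requires $\gamma_i\in\CI_{n+n'}$ rather than merely $\CI_n$; and the claim that $\sigma$ is a permutation is not a consequence of a single comparison of total measures (that does not rule out collisions among the $\sigma(i)$). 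The paper sidesteps both by conjugating the entire set $mgM_{n+(l-1)\ell(w)}\CI_{n+(l-1)\ell(w)+n'}$ at once by one $h\in\CI_{n+n'}$ that stabilizes the group, rather than matching cosets one by one.
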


\begin{proof}
(1) We first show that

(a) For any $i \in \BN$, any element in $m M_{n+(l-1) \ell(w)} \CI_{{n+(l-1) \ell(w)}+i}$ is conjugate by $\CI_{n+i}$ to an element in $m M_{n+(l-1) \ell(w)} \CI_{{n+(l-1) \ell(w)}+i+1}$. 

Note that any element in $m M_{n+(l-1) \ell(w)} \CI_{{n+(l-1) \ell(w)}+i}$ is conjugate by $\CI_{{n+(l-1) \ell(w)}+i}$ to an element of the form $u' g u$ with $u' \in N^-_{v, {n+(l-1) \ell(w)}+i}$, $g \in m M_{n+(l-1) \ell(w)}$ and $u \in N_{v, {n+(l-1) \ell(w)}+i}$. By \S\ref{q-p} (b), $g u g \i \in N_{v, {n+i}}$. We have $(u', g u g \i) \in (\CI_{{n+(l-1) \ell(w)}+i}, \CI_{n+i}) \subset \CI_{{n+(l-1) \ell(w)}+i+1}$. Now we have $$u' g u=u'(g u g \i) g \in (g u g \i) u' \CI_{{n+(l-1) \ell(w)}+i+1} g.$$ So $u' g u$ is conjugate by $\CI_{n+i}$ to an element in \begin{align*} u' \CI_{{n+(l-1) \ell(w)}+i+1} g (g u g \i) &=u' \CI_{{n+(l-1) \ell(w)}+i+1} (g^{2} u (g^{2}) \i) g \\ &=u' (g^{2} u (g^{2}) \i) \CI_{{n+(l-1) \ell(w)}+i+1} g \\ &=(g^{2} u (g^{2}) \i) u' \CI_{{n+(l-1) \ell(w)}+i+1} g.\end{align*} By the same procedure, for any $l \in \BN$, $u' g u$ is conjugate by $\CI_{n+i}$ to an element in $(g^{l} u (g^{l}) \i) u' \CI_{{n+(l-1) \ell(w)}+i+1} g$. By \S\ref{q-p} (a), $g^{l} u g^{-l}\in \CI_{{n+(l-1) \ell(w)}+i+1}$. Hence $u' g u$ is conjugate by $\CI_{n+i}$ to an element in $u'\CI_{{n+(l-1) \ell(w)}+i+1} g=u' g \CI_{{n+(l-1) \ell(w)}+i+1}$. By the same argument, any element in $u' g \CI_{{n+(l-1) \ell(w)}+i+1}$ is conjugate by $\CI_{n+i}$ to an element in $g \CI_{{n+(l-1) \ell(w)}+i+1}$. 

(a) is proved. 

Let $g_0 \in m M_n \CI_{n+(l-1) \ell(w)}$. By (a), we may construct inductively an element $z_i \in \CI_{n+i}$ for $i  \in \BN$ such that $g_{i+1}:=z_i \i g_i z_i$ is contained in $m M_{n+(l-1) \ell(w)} \CI_{n+(l-1) \ell(w)+i}$. The convergent product $z:=z_1 z_2 \cdots$ is a well-defined element in $\CI_{n}$ and $z \i g z \in m M_{n+(l-1) \ell(w)}$. 

(2) By part (1),  there exists $h \in \CI_{n+n'}$ such that $h m g h \i \in m M_{n+(l-1) \ell(w)}$. We have $(\CI_{n+n'}, M_{n+(l-1) \ell(w)}) \subset \CI_{n+n'+(l-1) \ell(w)}$. Therefore $M_{n+(l-1) \ell(w)} \CI_{n+(l-1) \ell(w)+n'}$ is a subgroup of $\CI$ and is stable under the conjugation action of $\CI_{n+n'}$. Thus $h m g M_{n+(l-1) \ell(w)} \CI_{n+(l-1) \ell(w)+n'} h \i=m M_{n+(l-1) \ell(w)} \CI_{n+(l-1) \ell(w)+n'}$. The statement is proved. 
\end{proof}

\subsection{}\label{non-com} We say that $m \in M$ is {\it $P_v$ strictly positive} if for any $n \in \BN$, we have $$m N_{v, n} m \i \subset N_{v, n+1}, \text{ and } m \i N^-_{v, n} m \subset N^-_{v, n+1}.$$ We denote by $H^{v^\sharp}(M)$ the subalgebra of $H(M)$ consisting of functions with support consisting of $P_v$ strictly positive elements. Note that the limit of the support of $j_{v, n}(\d_{Z_0})$ for $v$ dominant regular, as $n$ goes to infinite, is just $Z_0$ itself, but the support of $j_{v, n}(\d_{Z_0})$ for each $n$ contains of nonsplit regular semisimple elements. Thus the maps $\{j_{v, n}\}$ are not compatible with the natural maps $\bar H^v(M, M_n) \to \bar H^v(M, M_{n+1})$. 

However, we have the following compatibility result for $P_v$ strictly positive part. 

\begin{corollary}\label{str+}
Let $n \in \BN$. Then the following diagram commutes
\[\xymatrix{ \bar H^{v^\sharp}(M, M_n) \ar[r]^-{j_{v, n}} \ar@{^{(}->}[d] & \bar H(G, \CI_n) \ar@{^{(}->}[d] \\ \bar H^{v^\sharp}(M, M_{n+1}) \ar[r]^-{j_{v, n+1}} & \bar H(G, \CI_{n+1}).}\] 
\end{corollary}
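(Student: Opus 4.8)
The plan is to check the square on algebra generators, reduce the whole statement to a single congruence in the cocenter, and then prove that congruence using the Iwahori factorisation of $\CI_n$ together with Proposition~\ref{Levi-1}.

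Both vertical arrows are induced by inclusions of algebras, $H(G,\CI_n)\hookrightarrow H(G,\CI_{n+1})$ and $H^{v^\sharp}(M,M_n)\hookrightarrow H^{v^\sharp}(M,M_{n+1})$; the latter is legitimate because $P_v$ strict positivity of an element $m\in M$ is a property of $m$ itself and is therefore inherited by every element of the larger double coset $M_nmM_n$. So it suffices to fix a $P_v$ strictly positive $m$ and to compare, after passing to $\bar H(G,\CI_{n+1})$, the images of $\delta_{M_nmM_n}$ under the two composites. Applying $j_{v,n}$ and then the inclusion gives $\delta_v(m)^{-\frac12}\frac{\mu_M(M_n)}{\mu_G(\CI_n)}\,\delta_{\CI_nm\CI_n}$. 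For the other composite, write $M_nmM_n=\bigsqcup_j M_{n+1}m_jM_{n+1}$; the image is $\sum_j\delta_v(m_j)^{-\frac12}\frac{\mu_M(M_{n+1})}{\mu_G(\CI_{n+1})}\,\delta_{\CI_{n+1}m_j\CI_{n+1}}$, and since $\delta_v$ is a positive-valued character of $M$ it is trivial on the compact subgroup $M_n$, so $\delta_v(m_j)=\delta_v(m)$ for all $j$. Hence the corollary is equivalent to
$$\frac{\mu_M(M_n)}{\mu_G(\CI_n)}\,\delta_{\CI_nm\CI_n}\ \equiv\ \frac{\mu_M(M_{n+1})}{\mu_G(\CI_{n+1})}\,\sum_j\delta_{\CI_{n+1}m_j\CI_{n+1}}\pmod{[H,H]}.$$

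Two preliminary remarks guide the proof. First, both sides have the same integral over $G$: the measure computation underlying Bushnell's theorem shows that $\frac{\mu_M(M_k)}{\mu_G(\CI_k)}\mu_G(\CI_k m'\CI_k)$ depends only on the $M$-double coset of a $P_v$-positive $m'$ (it equals $\delta_v(m')^{\pm 1}\mu_M(M_km'M_k)$), so both sides integrate to the same quantity, using $\delta_v(m_j)=\delta_v(m)$ and $\mu_M(M_nmM_n)=\sum_j\mu_M(M_{n+1}m_jM_{n+1})$. This is the constraint any cocenter identity must meet, and it is met; it also shows the two functions are genuinely unequal in $H(G,\CI_{n+1})$, their coefficients differing by $[\CI_n:\CI_{n+1}]/[M_n:M_{n+1}]>1$ whenever $N_v\neq\{1\}$ — so the content is precisely the collapse of "transverse'' double cosets under conjugation. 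Second, $P_v$ strict positivity of $m$ is exactly condition \S\ref{q-p}(a) with $l=1$; thus Proposition~\ref{Levi-1} applies with $l=1$, in which case $(l-1)\ell(w)=0$ and its conclusions reduce to the statement that the class of $\delta_{mgM_k\CI_{k+n'}}$ in $\bar H$ is independent of $g\in\CI_k$ (and likewise for each strictly positive $m'\in M_nmM_n$).

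Then the main step is the congruence itself. I would use the Iwahori factorisation $\CI_n=N^-_{v,n}\cdot M_n\cdot N_{v,n}$ and the fact that strict positivity implies $(P_v,\CI_n)$-positivity to obtain Bushnell's factorisation $\CI_nm\CI_n=N^-_{v,n}(M_nmM_n)N_{v,n}$; decompose $\delta_{\CI_nm\CI_n}$ into indicator functions of one-sided cosets, sorted by their components in $N^-_{v,\bullet}$ and $N_{v,\bullet}$; and then invoke Proposition~\ref{Levi-1} to transport each such piece, within $\bar H$, onto a coset whose $M$-part lies in $M_nmM_n$. The number of pieces landing on a fixed $\CI_{n+1}m_j\CI_{n+1}$ should be $[N^-_{v,n}:N^-_{v,n+1}]\,[N_{v,n}:N_{v,n+1}]=[\CI_n:\CI_{n+1}]/[M_n:M_{n+1}]$, which exactly absorbs the coefficient discrepancy. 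I expect the hard part to be precisely this bookkeeping: organising the $\CI_{n+1}$-double cosets inside $\CI_nm\CI_n$ by their displacement in the $N_v^{\pm}$-directions and checking — via strict positivity, which is what guarantees that the relevant products behave well enough for the hypotheses of Proposition~\ref{Levi-1} to be met — that each is $\bar H$-conjugate to one meeting $M$, with precisely the stated multiplicity. The integral computation above makes the final count consistent, so this step is structural rather than exploratory. Once the congruence is established in $\bar H$, it gives the commutativity of the diagram.
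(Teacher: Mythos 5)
Your reduction, key lemma, and overall strategy coincide with the paper's: both arguments boil down to a cocenter congruence between $\delta_{\CI_n m\CI_n}$ and $\delta_{\CI_{n+1}M_nmM_n\CI_{n+1}}$ established via Proposition~\ref{Levi-1}(2), plus a coefficient check. The one place you differ is the bookkeeping you flag as ``the hard part'': rather than sorting $\CI_{n+1}$-double cosets inside $\CI_nm\CI_n$ by $N_v^{\pm}$-displacement and counting multiplicities, the paper writes $\CI_nm\CI_n=\bigsqcup_{(i_1,i_2,i_1',i_2')} i_1 i_1'\,\CI_{n+1}M_nmM_n\CI_{n+1}\,i_2'i_2$ with representatives in $N_n$ and $N_n^-$, treats the single set $\CI_{n+1}M_nmM_n\CI_{n+1}$ as the unit (each translate collapses to it modulo $[H,H]$ by Proposition~\ref{Levi-1}(2)), and then gets the coefficient identity directly from $\mu_G(\CI_km\CI_k)/\mu_G(\CI_k)=q^{\ell(w)}$ and its $M$-analogue (\cite[Lemma 4.6]{hecke-1}), so the coset-by-coset count never has to be organised. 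One small inaccuracy in your sanity-check parenthetical: $\tfrac{\mu_M(M_k)}{\mu_G(\CI_k)}\mu_G(\CI_km'\CI_k)$ equals $q^{\ell(w)-\ell_M(w)}\mu_M(M_km'M_k)$ rather than generically $\delta_v(m')^{\pm1}\mu_M(M_km'M_k)$; it still depends only on the $\CI_M$-double coset of $m'$, so your consistency observation is fine and this aside does not affect the main argument.
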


\begin{proof}
Let $m \in M$ be $P_v$ strictly positive. Then $\d_{M_n m M_n} \in H^{v^\sharp}(M, M_n) \subset H^{v^\sharp}(M, M_{n+1})$. By definition, $$j_{v, n+1}(\d_{M_n m M_n})=\d_v(m)^{-\frac{1}{2}} \frac{\mu_{M} (M_{n+1})}{\mu_G(\CI_{n+1})} \d_{\CI_{n+1} M_n m M_n \CI_{n+1}}.$$

Note that $\CI_{n+1} M_n=M_n \CI_{n+1}$ is a subgroup of $\CI$. We have $$\CI_n m \CI_n=\sqcup_{(i_1, i_2, i'_1, i'_2)} i_1 i'_1 \CI_{n+1} M_n m M_n \CI_{n+1} i'_2 i_2,$$ where $\{(i_1, i_2, i'_1, i'_2)\} \subset N_n \times N_n \times N^-_n \times N^-_n$ is a finite subset. By Proposition \ref{Levi-1} (2), for $i_1, i_2 \in N_n$ and $i'_1, i'_2 \in N^-_n$, we have \[ \d_{i_1 i'_1 \CI_{n+1} M_n m M_n \CI_{n+1} i'_2 i_2} \equiv \d_{\CI_{n+1} M_n m M_n \CI_{n+1}} \mod [H, H].\] Thus $$j_{v, n}(\d_{M_n m M_n}) \equiv \d_v(m)^{-\frac{1}{2}} \frac{\mu_{M} (M_{n})}{\mu_G(\CI_{n})} \frac{\mu_G(\CI_n m \CI_n)}{\mu_G(\CI_{n+1} M_n m M_n \CI_{n+1})} \d_{\CI_{n+1} M_n m M_n \CI_{n+1}} \mod [H, H].
$$

It remains to show that $\frac{\mu_{M} (M_{n+1})}{\mu_G(\CI_{n+1})}=\frac{\mu_{M} (M_{n})}{\mu_G(\CI_{n})} \frac{\mu_G(\CI_n m \CI_n)}{\mu_G(\CI_{n+1} M_n m M_n \CI_{n+1})} $. 

Suppose that $m \in \CI_M \dot w \CI_M$ for some $w \in \tW(M)$. By \cite[Lemma 4.6]{hecke-1}, \begin{gather*} \frac{\mu_G(\CI_n m \CI_n)}{\mu_G(\CI_n)}=\frac{\mu_G(\CI_{n+1} m \CI_{n+1})}{\mu_G(\CI_{n+1})}=q^{\ell(w)}, \frac{\mu_M(M_n m M_n)}{\mu_M(M_n)}=\frac{\mu_M(M_{n+1} m M_{n+1})}{\mu_M(M_{n+1})}=q^{\ell_M(w)}. \end{gather*} Now we have 
\begin{align*} \frac{\mu_{M} (M_{n})}{\mu_G(\CI_{n})} \frac{\mu_G(\CI_n m \CI_n)}{\mu_G(\CI_{n+1} M_n m M_n \CI_{n+1})} &=\frac{\mu_{M} (M_{n})}{\mu_G(\CI_{n})} \frac{\mu_G(\CI_n m \CI_n)}{\mu_G(\CI_{n+1} m \CI_{n+1})} \frac{\mu_G(\CI_{n+1} m \CI_{n+1})}{\mu_G(\CI_{n+1} M_n m M_n \CI_{n+1})} \\ &=\frac{\mu_{M} (M_{n})}{\mu_G(\CI_{n})} \frac{\mu_G(\CI_n m \CI_n)}{\mu_G(\CI_{n+1} m \CI_{n+1})} \frac{\mu_M(M_{n+1} m M_{n+1})}{\mu_M(M_n m M_n)} \\ &=\frac{\mu_{M} (M_{n})}{\mu_G(\CI_{n})}  \frac{\mu_G (\CI_{n})}{\mu_G(\CI_{n+1})} \frac{\mu_{M} (M_{n+1})}{\mu_M(M_{n})}=\frac{\mu_{M} (M_{n+1})}{\mu_G(\CI_{n+1})} .
\end{align*}
The statement is proved.
\end{proof}

Finally we show that the elements in $M_\nu(\nu)$ are $P_\nu$ quasi-positive. 

\begin{proposition}\label{+1}
Let $v \in V$ be a rational coweight and $M=M_v$. Let $w \in \tW(M)$.  Then there exists a positive integer $i_{\nu,w}$ such that for any $m \in \CI_M \dot w \CI_M \cap M(v)$ and $n \ge i_{v, w}$, we have $$m^{i_{v, w}} N_{v, n} (m^{i_{v, w}}) \i \subset N_{v, n+1}, \quad (m^{i_{v, w}}) \i N^-_{v, n} m^{i_{v, w}} \subset N^-_{v, n+1}.$$ 
\end{proposition}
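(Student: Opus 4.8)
The plan is to begin with two elementary reductions. Replacing $m$ by $m^{-1}$, $w$ by $w^{-1}$ and $v$ by $-v$ --- legitimate because $M_{-v}=M_v=M$, because $-v$ is again $M$-dominant (it is $M$-central), because $N_{-v}=N^-_v$ and $N^-_{-v}=N_v$, and because $M(-v)=M(v)^{-1}$ as inversion preserves both minimal length and Newton points --- reduces us to producing a single $i_{v,w}$ for which $m^{i_{v,w}}N_{v,n}(m^{i_{v,w}})^{-1}\subseteq N_{v,n+1}$ whenever $n\ge i_{v,w}$. Since $N_{v,n}=N_v\cap\CI_n$ is a product of affine root subgroups, it is in turn enough to control the image of each of these under conjugation.

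Next I would isolate the two conjugations that behave well. First, every element of $\CI_M$ normalises each $N_{v,n}$ (and each $N^-_{v,n}$): indeed $\CI_M\subset M$ normalises $N_v$, while $\CI_M\subset\CI$ normalises the subgroups $\CI_n$, which are normal in $\CI$. Second, for $\lambda=Nv\in X_*(Z)$ a positive multiple of $v$, the element $t^\lambda$ normalises $\CI_M$ --- because $\langle a,v\rangle=0$ for every root $a$ of $M$, so $t^\lambda$ fixes every affine root subgroup of $\CI_M$ and normalises $Z_0$ --- and, being strongly $P_v$-positive, $t^\lambda$ strictly contracts the Moy--Prasad filtration of $N_v$: for $N$ large one has $t^{Nv}N_{v,n}t^{-Nv}\subseteq N_{v,n+1}$ for all $n$ (and $t^{Nv}$ correspondingly dilates the filtration of $N^-_v$). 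Hence for any element of the form $j_1t^{Nv}j_2$ with $j_1,j_2\in\CI_M$ and $N$ large,
\[
j_1t^{Nv}j_2\,N_{v,n}\,j_2^{-1}t^{-Nv}j_1^{-1}=j_1\bigl(t^{Nv}N_{v,n}t^{-Nv}\bigr)j_1^{-1}\subseteq j_1N_{v,n+1}j_1^{-1}=N_{v,n+1},
\]
and similarly on the $N^-_v$-side; so the proposition is clear for any $m$ that, after raising to a bounded power and conjugating by an element of $\CI_M$, can be brought into a double coset $\CI_M t^{Nv}\CI_M$ with $N$ fixed and large.

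The real content is to carry out this reduction uniformly over $m\in\CI_M\dot w\CI_M\cap M(v)$, and here the theory of minimal length elements of \cite{HN1} enters, applied inside $\tW(M)$. Applying the reduction-to-minimal-length procedure to the double coset $\CI_M\dot w\CI_M$ --- each cyclic-shift step being realised by conjugation by an explicit element of $\CI_M$, with the length book-keeping supplied by \cite[Lemma 4.6]{hecke-1} --- shows that $m$ is $\CI_M$-conjugate, by an element $g$ lying in a double coset $\CI_M\dot y\CI_M$ with $\ell_M(y)$ bounded in terms of $\ell(w)$ alone, to an element $m'$ of some $\CI_M\dot{w'}\CI_M$ with $w'$ of minimal length in its $\tW(M)$-conjugacy class. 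Since $w'$ is of minimal length, every element of $\CI_M\dot{w'}\CI_M$ has the same Newton point as $w'$ (\cite{hecke-1}), so the hypothesis $m\in M(v)$ forces this common Newton point to be $v$, i.e.\ $\bar\nu_{w'}=v$. One then analyses the powers of $m'$: writing $(w')^l=t^{lv}$, and using that the finite part of $w'$ has order bounded in terms of $w$ and fixes $\Phi_{v,+}$ pointwise, one checks --- after passing to a further bounded power if necessary, and combining $t^{lv}\CI_M t^{-lv}=\CI_M$ with the multiplicativity $\CI_M\dot x\CI_M\dot{x'}\CI_M=\CI_M\dot{xx'}\CI_M$ along length-additive products --- that some bounded power $(m')^{I}$ lies in a double coset $\CI_M\dot u\CI_M$ whose representative satisfies $\dot u\,N_{v,n}\,\dot u^{-1}\subseteq N_{v,n+1}$; the gain of at least one Moy--Prasad step here is exactly the infinitesimal form of $\bar\nu_{w'}=v$, namely $\langle a,v\rangle>0$ for $a\in\Phi_{v,+}$. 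Combining this with the harmless $\CI_M$-conjugation, and taking $I$ large enough to also absorb the $O(\ell(w))$-shift in the Moy--Prasad filtration caused by conjugating by $g$, one obtains the claim, with $i_{v,w}$ a function of $v$ and $w$ only --- the same $O(\ell(w))$-shift being what forces the hypothesis $n\ge i_{v,w}$.

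I expect this uniform reduction to be the main obstacle, for two reasons. First, the given $w$ need not lie in the conjugacy class relevant to $m$ --- an element of $\CI_M\dot w\CI_M$ may have strictly larger Newton point than $w$ --- so one really must pass through the reduction tree of \cite{HN1}, whose leaves are minimal length elements and, once intersected with $M(v)$, necessarily carry Newton point $v$. Second, a minimal length element with central Newton point need not be straight, so its powers need not have additive length; one therefore has to iterate the passage to minimal length (equivalently, the analysis of the powers of $m'$) while checking that all the intervening bounds depend on $v$ and $w$ alone.
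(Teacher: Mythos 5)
Your high-level strategy matches the paper's: reduce $m$ to a minimal-length element of $\tW(M)$ via the theory of \cite{HN1}, note that conjugation by $\CI_M$ and by a bounded element of $M$ shifts the Moy--Prasad filtration of $N_v$ by a bounded amount, and then use the translation part of a bounded power to gain strictly more than that bound. The inversion reduction at the start is a small addition (the paper just says ``similarly'' for $N_v^-$) and works, since $M_{-v}=M_v$, $N_{-v}=N_v^-$, and $M(-v)=M(v)^{-1}$.

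The weak point is the analysis of powers of $m'$, precisely where you flag uncertainty. First, ``$(w')^l=t^{lv}$'' is false for a general minimal-length element $w'$ with $M$-central Newton point $v$: only for $l$ a multiple of the order of the finite part times the integrality period of $v$, and even then this is a fact about the group element $w'$, not about $m'\in \CI_M\dot{w'}\CI_M$. Likewise the finite part of $w'$ \emph{preserves} $\Phi_{v,+}$ as a set (because it lies in $\tW(M)$ and $v$ is $M$-central), but it certainly does not fix $\Phi_{v,+}$ pointwise. More seriously, the step ``some bounded power $(m')^I$ lies in a double coset $\CI_M\dot u\CI_M$ whose representative contracts $N_{v,n}$'' is not justified by ``multiplicativity along length-additive products'': the product $m'\cdots m'$ is drastically non-length-additive (indeed $\ell_M(t^{Lv})=0$ since $v$ is $M$-central, while $L\,\ell_M(w')$ grows), so Hecke multiplicativity in $H(M)$ does not give a single double coset. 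Your proposed fix ``iterate the passage to minimal length'' is not the mechanism the paper uses and it is unclear it converges with uniform bounds. The paper instead invokes Theorem~\ref{min}, which produces not merely a minimal-length $w'$ but a \emph{standard triple} $(x,K,u)$ with $w'=ux$, $x$ straight, $\Ad(x)(K)=K$, $W_K$ finite; the crucial multiplicative input is then \S\ref{basic-min}(c): any product of elements of $\CI_M\dot u\dot x\CI_M$ factors as (bounded part in $\CI_M W_K\CI_M$) times ($\CI_M\dot x^n\CI_M$). This factorization, not single-coset membership, is what feeds into the final estimate. With the standard-triple structure and \S\ref{basic-min}(c) substituted into step 4 of your outline, the rest of your argument is sound and gives exactly the paper's bound $i_{v,w}=(2N_0+N_1+1)\,\mathfrak i$.
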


\subsection{}\label{basic-min} The proof relies on some remarkable properties of the Iwahori-Weyl group, which we recall here. 

For $w, w' \in \tW$ and $s \in \tilde \BS$, we write $w \xrightarrow{s} w'$ if $w'=s w s$ and $\ell(w') \le \ell(w)$.  We write $w \to w'$ if there is a sequence $w=w_0, w_1, \cdots, w_n=w'$ of elements in $\tW$ such that for any $1 \le k \le n$, $w_{k-1} \xrightarrow{s_k} w_k$ for some $s_k \in \tilde \BS$. We write $w \approx w'$ if $w \to w'$ and $w' \to w$. It is easy to see that if $w \to w'$ and $\ell(w)=\ell(w')$, then $w \approx w'$. We have that

(a) If $w \xrightarrow{s} w'$ and $\ell(w)=\ell(w')$, then for any $g \in \CI \dot w \CI$, there exists $g' \in \CI \dot s \CI$ such that $g' g (g') \i \in \CI \dot w' \CI$. 

(b) If $w \xrightarrow{s} w'$ and $\ell(w')<\ell(w)$, then for any $g \in \CI \dot w \CI$, there exists $g' \in \CI \dot s \CI$ such that $g' g (g') \i \in \CI \dot w' \CI \sqcup \CI \dot s \dot w \CI$.

An element $w \in \tW$ is called {\it straight} if $\ell(w^n)=n \ell(w)$ for any $n \in \BN$. A triple $(x, K, u)$ is called a {\it standard triple} if $x \in \tW$ is straight, $K \subset \tilde \BS$ with $W_K$ finite, $x \in {}^K \tW$ and $\Ad(x)(K)=K$, and $u \in W_K$. By definition, 

(c) For any $n \in \BN$ and $g_1, \cdots, g_n \in \CI \dot u \dot x \CI$, we have $g_1 g_2 \cdots g_n \in (\CI W_K \CI) (\CI \dot x^n \CI)$. 

It is proved in \cite[Theorem A \& Proposition 2.7]{HN1} that 
\begin{theorem}\label{min}
For any $w \in \tW$, there exists a standard triple $(x, K, u)$ such that $u x \in \tW_{\min}$ and $w \to u x$. In this case, $\pi(w)=\pi(x)$. 
\end{theorem}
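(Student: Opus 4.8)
The plan is to reduce $m$ to the straight element supplied by Theorem~\ref{min}, for which a well-chosen power is a central translation times a compact factor, and then to observe that such a central translation manifestly contracts the Moy--Prasad filtrations of $N_v$ and $N_v^-$.

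First I would apply Theorem~\ref{min} to the Iwahori--Weyl group $\tW(M)$ to obtain a standard triple $(x,K,u)$ in $\tW(M)$ with $ux\in\tW(M)_{\min}$, $w\to ux$, and $\pi_M(w)=\pi_M(x)$. Since $M=M_v$ the coweight $v$ is fixed by $W_0(M)$, so $\CI_M\dot w\CI_M\cap M(v)\ne\emptyset$ forces $\bar\nu_M(w)=v$, hence $\nu_x=\nu_w=v$; pick $l\in\BN$ with $x^l=t^{lv}$. Because $x$ is straight while $\ell_M(t^{lv})=0$ (the coweight $lv$ being $M$-central), we get $\ell_M(x)=0$, i.e.\ $x\in\Omega_M$. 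Moreover $lv$ is $W_0(M)$-fixed, so $t^{jlv}$ is central in $\tW(M)$ for every $j$, whence $\dot t^{jlv}$ normalizes $\CI_M$ and the parahoric $P_K:=\CI_M W_K\CI_M$ of $M$. Next, running the chain $w=w_0\xrightarrow{s_1}\cdots\xrightarrow{s_r}w_r=ux$ and applying \S\ref{basic-min}(a),(b) one step at a time, I would conjugate $m$ into $\CI_M\dot u\dot x\CI_M$; the conjugator at step $k$ can be chosen in the fixed double coset $\CI_M\dot s_k\CI_M$, so the total conjugator $h$ (with $m_1:=hmh^{-1}\in\CI_M\dot u\dot x\CI_M$) has $\ell$-length bounded by some $L=L(w)$ independent of $m$ (the branch $\CI\dot s_k\dot w_k\CI$ of \S\ref{basic-min}(b) being excluded, as $m$ stays in the single Newton stratum $M(v)$). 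Then \S\ref{basic-min}(c), applied inside $M$, gives
$$m_1^{jl}\in(\CI_M W_K\CI_M)(\CI_M\dot x^{jl}\CI_M)=P_K\,\dot t^{jlv}\CI_M=\dot t^{jlv}P_K,$$
using $x^{jl}=t^{jlv}$, $\CI_M\dot t^{jlv}\CI_M=\dot t^{jlv}\CI_M$, and that $\dot t^{jlv}$ normalizes $P_K$. Hence $m_1^{jl}=\dot t^{jlv}p$ with $p\in P_K$ compact.

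For the contraction estimate: from $g\CI_{n+\ell(y)}g^{-1}\subset\CI_n$ ($g\in\CI\dot y\CI$) one gets, for $n\ge c_K:=\max_{y\in W_K}\ell(y)$, that $pN_{v,n}p^{-1}\subset N_{v,n-c_K}$ and $p^{-1}N^-_{v,n}p\subset N^-_{v,n-c_K}$ (using $p\in M$, so $p$ normalizes $N_v$, and $P_K\subset\CI W_K\CI$). Since $\dot t^{jlv}\in Z(F)$ conjugates $X_{a+r}$ to $X_{a+r+\langle a,\,jlv\rangle}$, and $\langle a,v\rangle>0$ with $l\langle a,v\rangle\in\BZ_{>0}$ for all $a\in\Phi_{v,+}$, we have $\dot t^{jlv}N_{v,n}\dot t^{-jlv}\subset N_{v,n+j}$ and $\dot t^{-jlv}N^-_{v,n}\dot t^{jlv}\subset N^-_{v,n+j}$. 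Combining, $m_1^{jl}N_{v,n}m_1^{-jl}\subset N_{v,\,n-c_K+j}$ and $m_1^{-jl}N^-_{v,n}m_1^{jl}\subset N^-_{v,\,n-c_K+j}$ for $n\ge c_K$. Transferring back via $m=h^{-1}m_1h$ and using that conjugation by $h$ or $h^{-1}$ moves Moy--Prasad indices by at most $L$, for $n$ large
$$m^{jl}N_{v,n}m^{-jl}=h^{-1}m_1^{jl}\bigl(hN_{v,n}h^{-1}\bigr)m_1^{-jl}h\subset h^{-1}m_1^{jl}N_{v,\,n-L}m_1^{-jl}h\subset N_{v,\,n-2L-c_K+j},$$
and symmetrically for $N_v^-$. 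Taking $j:=2L+c_K+1$ and $i_{v,w}:=jl$, which depends only on $v$ and $w$, yields $m^{i_{v,w}}N_{v,n}m^{-i_{v,w}}\subset N_{v,n+1}$ and $m^{-i_{v,w}}N^-_{v,n}m^{i_{v,w}}\subset N^-_{v,n+1}$ for all $n\ge i_{v,w}$.

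The main difficulty is the reduction step: combining Theorem~\ref{min} with \S\ref{basic-min}(a),(b) to move an \emph{arbitrary} $m\in\CI_M\dot w\CI_M\cap M(v)$ into $\CI_M\dot u\dot x\CI_M$ with an $m$-independent length bound on the conjugator requires care, both because $\ell_M$ differs from the restriction of $\ell$ to $\tW(M)$ and because one must correctly handle the branching in \S\ref{basic-min}(b) --- which is precisely where the Newton-stratum hypothesis $m\in M(v)$ enters. A secondary point is to keep every constant in the Moy--Prasad index-shift bookkeeping uniform over the infinite set of admissible $m$.
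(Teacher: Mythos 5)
Your proposal does not prove the statement at issue. The statement is Theorem \ref{min} itself: for every $w \in \tW$ there exists a standard triple $(x, K, u)$ with $ux \in \tW_{\min}$, $w \to ux$, and $\pi(w)=\pi(x)$. Your very first step is to \emph{apply} Theorem \ref{min} (to $\tW(M)$), and everything that follows --- conjugating an element $m \in \CI_M \dot w \CI_M \cap M(v)$ into $\CI_M \dot u \dot x \CI_M$ with a conjugator of bounded length, the Moy--Prasad contraction estimates for $N_v$ and $N_v^-$, and the choice of $i_{v,w}$ --- is an argument for the quasi-positivity statement of Proposition \ref{+1}, not for Theorem \ref{min}. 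As a proof of Theorem \ref{min} this is circular: the result to be proved is invoked as the key input and is never established.

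Note moreover that the paper does not prove Theorem \ref{min} either; it is quoted from \cite{HN1} (Theorem A and Proposition 2.7), and an actual proof requires the combinatorial analysis of conjugacy classes in the extended affine Weyl group carried out there: reduction to straight conjugacy classes, the length-reduction/partial-conjugation argument producing a minimal length element of the form $ux$ for a standard triple $(x,K,u)$, and the invariance of $\kappa$ and $\bar\nu$ under the moves $w \xrightarrow{s} w'$, which yields $\pi(w)=\pi(x)$. None of that machinery appears in your write-up. If your intended target was in fact Proposition \ref{+1}, your outline is close in spirit to the paper's argument (Lemma \ref{seq-n} gives a conjugator of uniformly bounded length, and \S\ref{basic-min}(c) together with a central power $t^{\mathfrak i v}$ gives the contraction), but even there your claim that the branch $\CI \dot s_k \dot w_k \CI$ in \S\ref{basic-min}(b) is ``excluded because $m$ stays in $M(v)$'' is unjustified: the paper does not exclude it, but absorbs it into the weaker relation $\rightharpoonup$ and an induction on length in Lemma \ref{seq-n}.
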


Following \cite[\S 4.3]{GHN}, we write $w \overset s \rightharpoonup w'$ if either $w \overset s \to w'$ or $w'=s w$ and $\ell(w)>\ell(s w s)$, and we write $w \rightharpoonup w'$ if there exists a sequence $w=w_0, w_1, \cdots, w_n=w'$ of elements in $\tW$ such that for any $1 \le k \le n$, $w_{k-1} \overset{s_k} \rightharpoonup w_k$ for some $s_k \in \tilde \BS$. It is easy to see that if $w \in \tW_{\min}$ and $w \rightharpoonup w'$, then $w \approx w'$. 

We show that 

\begin{lemma}\label{seq-n}
Let $w \in \tW$ and $g \in \CI \dot w \CI$. Then there exists a standard triple $(x, K, u)$, a sequence $w=w_0, w_1, \cdots, w_n=u x$ of distinct elements in $\tW$ and a sequence $g=g_0, g_1, \cdots, g_n$ of elements in $G$ such that \begin{enumerate} 
\item $u x \in \tW_{\min}$;
\item for any $0 \le k \le n$, $g_k \in \CI \dot w_k \CI$;
\item for any $1 \le k \le n$, there exists $s_k \in \tilde \BS$ and $h_k \in \CI \dot s_k \CI$ such that $w_{k-1} \overset{s_k} \rightharpoonup w_k$ and $g_k=h_k g_{k-1} h_k \i$. 
\end{enumerate}
\end{lemma}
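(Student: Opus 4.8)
The plan is to induct on $\ell(w)$, refining the proof of Theorem \ref{min}: one follows the cyclic-shift chain that carries $w$ to a minimal length element and, at each elementary step, uses \S\ref{basic-min} (a) and \S\ref{basic-min} (b) to transport a group element alongside it. The reason the statement is phrased with the one-sided relation $\rightharpoonup$ rather than $\to$ is precisely to absorb the ambiguity in \S\ref{basic-min} (b).

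Concretely, by Theorem \ref{min} I would fix a standard triple $(x,K,u)$ with $ux\in\tW_{\min}$ and a chain $w=v_0\overset{t_1}{\to}v_1\overset{t_2}{\to}\cdots\overset{t_r}{\to}v_r=ux$; by the definition of $\overset{t}{\to}$ the lengths $\ell(v_0)\ge\ell(v_1)\ge\cdots\ge\ell(v_r)$ are non-increasing along it, and after excising cyclic sub-loops (if $v_i=v_j$ with $i<j$ then $v_i\overset{t_{j+1}}{\to}v_{j+1}$, so one may delete $v_{i+1},\dots,v_j$; this terminates) I may assume the $v_i$ are pairwise distinct. Now walk along this chain carrying a group element. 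Suppose I have reached $v_{i-1}$ together with $g_{i-1}\in\CI\dot v_{i-1}\CI$, obtained from $g$ by a succession of conjugations, each by an element of some $\CI\dot s\CI$ with $s\in\tilde\BS$. If $\ell(v_i)=\ell(v_{i-1})$, \S\ref{basic-min} (a) gives $h_i\in\CI\dot t_i\CI$ with $h_ig_{i-1}h_i^{-1}\in\CI\dot v_i\CI$; set $g_i=h_ig_{i-1}h_i^{-1}$, record the step $v_{i-1}\overset{t_i}{\rightharpoonup}v_i$, and continue. If $\ell(v_i)<\ell(v_{i-1})$, \S\ref{basic-min} (b) gives $h_i\in\CI\dot t_i\CI$ with $h_ig_{i-1}h_i^{-1}$ lying either in $\CI\dot v_i\CI$ or in $\CI\dot t_i\dot v_{i-1}\CI$. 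In the first case I proceed exactly as before. In the second case I abandon the chain: since $t_iv_{i-1}t_i=v_i$ and $\ell(v_i)<\ell(v_{i-1})$, the move $v_{i-1}\overset{t_i}{\rightharpoonup}t_iv_{i-1}$ is legitimate, and $\ell(t_iv_{i-1})=\ell(v_{i-1})-1<\ell(w)$ (recall that $\ell(sws)<\ell(w)$ forces $\ell(sw)<\ell(w)$, hence $\ell(sw)=\ell(w)-1$). I then invoke the inductive hypothesis on the pair $(t_iv_{i-1},\,h_ig_{i-1}h_i^{-1})$ and splice its chain onto the initial segment $v_0,\dots,v_{i-1}$. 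If the second case never occurs, the walk terminates at $v_r=ux\in\tW_{\min}$; in particular when $w\in\tW_{\min}$ the chain is length-constant, only \S\ref{basic-min} (a) is ever used, and no recursion occurs, so the induction is well-founded.

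Conditions (1)--(3) are then immediate, and the chain is finite since it is a concatenation of finite pieces; the only genuine point is pairwise distinctness of the $\tW$-elements produced. Every step $\overset{s}{\rightharpoonup}$ is non-increasing in $\ell$ (if $w'=sw$ with $\ell(w)>\ell(sws)$ then necessarily $\ell(sw)=\ell(w)-1$), so all elements of the tail supplied by the inductive hypothesis have length $\le\ell(t_iv_{i-1})=\ell(v_{i-1})-1$, whereas all elements of the initial segment $v_0,\dots,v_{i-1}$ have length $\ge\ell(v_{i-1})$ (these lengths being non-increasing, $v_{i-1}$ is the shortest). The two segments therefore involve elements of disjoint length ranges; the initial segment is distinct after loop-excision and the tail is distinct by induction, so the whole list is distinct. (When no detour is taken the list is simply $v_0,\dots,v_r$.)

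I expect the heart of the matter to be exactly the ambiguity in \S\ref{basic-min} (b): when the length drops, conjugating $g_{i-1}$ by an element of $\CI\dot t_i\CI$ may land it in $\CI\dot t_i\dot v_{i-1}\CI$ rather than in $\CI\dot v_i\CI$, and $t_iv_{i-1}$ is in general not conjugate to $v_{i-1}$ in $\tW$ -- which is precisely why one must use the asymmetric relation $\rightharpoonup$, designed to record such left-multiplication moves. The accompanying care is to check that every such detour genuinely shortens the element (so the recursion reaches a minimal length element) and that this strict length drop keeps the detour disjoint from everything constructed so far (so the distinctness bookkeeping survives concatenation).
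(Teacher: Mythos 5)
Your argument is correct and is essentially the paper's proof: both induct on $\ell(w)$, transport the group element along a cyclic-shift chain using \S\ref{basic-min} (a) at length-preserving steps, and at a length-decreasing step invoke \S\ref{basic-min} (b) and recurse to the strictly shorter element. The only difference is organizational -- the paper stops the forward walk at the \emph{first} spot where a length drop is possible (so the initial segment consists entirely of elements of length $\ell(w)$ and distinctness from the recursive tail is immediate), whereas you walk the full chain and recurse only when the ``second case'' of (b) actually occurs, patching distinctness with the length-disjointness observation -- but this is the same method.
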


\begin{remark}\label{seq-n'}
By definition, if $w \rightharpoonup w'$, then $w' \in w W_a$ and $\ell(w') \le \ell(w)$. In particular, the length of the sequence is at most $\sharp\{x \in W_a; \ell(x) \le \ell(w)\}$. 
\end{remark}

\begin{proof}
We argue by induction on $\ell(w)$. 

If $w \in \tW_{\min}$, by Theorem \ref{min}, there exists a standard triple $(x, K, u)$ with $ux \in \tW_{\min}$ and a sequence $w=w_0, w_1, \cdots, w_n=u x$ of distinct elements in $\tW$ such that for any $1 \le k \le n$, $w_{k-1} \xrightarrow{s_k} w_k$ for some $s_k \in \tilde \BS$. Since $w \in \tW_{\min}$, we have $\ell(w_k)=\ell(w)$ for all $k$. Now the statement follows from \S\ref{basic-min} (a). 

If $w \notin \tW_{\min}$, then by Theorem \ref{min}, there exists a sequence $w=w_0, w_1, \cdots, w_n$ of distinct elements in $\tW$ such that $\ell(w)=\ell(w_n)$, for any $1 \le k \le n$, $w_{k-1} \xrightarrow{s_k} w_k$ for some $s_k \in \tilde \BS$ and there exists $s \in \tilde \BS$ with $s w_n s<w_n$. Then we have $\ell(w_k)=\ell(w)$ for all $k$. By \S\ref{basic-min} (a), for any $1 \le k \le n$, there exists $h_k \in \CI \dot s_k \CI$ such that $g_k=h_k g_{k-1} h_k \i$. By \S\ref{basic-min} (b), there exists $h_{n+1} \in \CI \dot s \CI$ such that $h_{n+1} g_n h_{n+1} \i \in \CI \dot w_{n+1} \CI$ with $w_{n+1} \in \{s w_n, s w_n s\}$. Now the statement follows from inductive hypothesis on $w_{n+1}$. 
\end{proof}

\subsection{Proof of Proposition \ref{+1}}
Let $N_0=\sharp\{w' \in W_a(M); \ell_M(w') \le \ell_M(w)\}$. By Lemma \ref{seq-n} and remark \ref{seq-n'}, there exists a standard triple $(x, K, u)$ of $\tW(M)$ and an element $h \in \cup_{z \in W_a(M); \ell(z) \le N_0} \CI_M \dot z \CI_M$ such that $u x \in \tW(M)_{\min}$, $w \rightharpoonup ux$ and $h m h \i \in \CI_M \dot u \dot x \CI_M$. 

Let $\mathfrak i$ be a positive integer with $\mathfrak i v \in X_*(Z)$.  Then $x^{\mathfrak i}=t^{\mathfrak i v} \in \tW$ represents a central element in $M$. By \S \ref{basic-min} (c), for any $l \in \BN$, $$(h m h \i)^{l \mathfrak i} \in (\CI_M W_K \CI_M) (\CI_M t^{l \mathfrak i v} \CI_M).$$ 

Let $N_1=\max_{K \subset \tilde \BS(M); W_K \text{ is finite }} \sharp W_K$. Let $i_{v, w}=(2 N_0+N_1+1) \mathfrak i$. Then for any $\a \in \Phi_{v, +}$, $\<i_{v, w} v, \a\> \ge 2 N_0+N_1+1$. Note that $m^{i_{v, w}}=h \i (g_1 g_2) h$ with $h \in \cup_{w' \in \tW(M); \ell(w') \le N_0} \CI_M \dot w' \CI_M$, $g_1 \in \cup_{u' \in \tW(M); \ell_M(u') \le N_1} \CI_M \dot u' \CI_M$ and $g_2 \in \CI_M t^{i_{v, w} v} \CI_M$. So 
\begin{align*} m^{i_{v, w}} N_{v, n} (m^{i_{v, w}}) \i &=h \i g_1 g_2 h N_{v, n} h \i g_2 \i g_1 \i h \\ & \subset h \i g_1 g_2 N_{v, n-N_0} g_2 \i g_1 \i h  \\ & \subset h \i g_1 N_{v, n-N_0+(2 N_0+N_1+1)} g_1 \i h \\ & \subset h \i N_{v, n-N_0+(2 N_0+N_1+1)-N_1} h \\ & \subset N_{v, v, n-N_0+(2 N_0+N_1+1)-N_1-N_0}=N_{v, n+1}.
\end{align*}

Similarly, $m^{-i_{v, w}} N^-_{v, n} m^{i_{v, w}} \subset N^-_{v, n+1}$. 

\section{The map $\bar i_\nu$}\label{3}

We define the induction map $\bar i_\nu$, which is the main object in this paper.

\begin{theorem}\label{levi}
Let $M$ be a semistandard Levi subgroup of $G$ and $\nu \in \aleph_M$ with $M=M_\nu$. Then

(1) For $m \in M$ and an open compact subgroup $\CK_M$ of $\CI_M$ with $m \CK_M \subset M(\nu)$, the map $$\d_{m \CK_M} \mapsto \d_{\nu}(m)^{-\frac{1}{2}} \frac{\mu_M(\CK_M)}{\mu_G(\CK_M \CK)} \d_{m \CK_M \CK}+[H, H]$$ from $H(M; \nu)$ to $\bar H(\bar \nu)$ is independent the choice of sufficiently small open compact subgroup $\CK$ of $G$

(2) The map $i_\nu: H(M; \nu) \to \bar H$ defined above induces a map $$\bar i_\nu: \bar H(M; \nu) \to \bar H.$$
\end{theorem}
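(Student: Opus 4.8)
\emph{Well-definedness and the target $\bar H(\bar\nu)$.} Fix $m$ with $m\CK_M\subseteq M(\nu)$ and write $m\in\CI_M\dot w\CI_M$, $w\in\tW(M)$. Since $\CI_n\trianglelefteq\CI$ for $n\ge1$, every open compact subgroup of $\CI_M$ normalizes $\CI_n$, so $\CK_M\CI_n$ is a group; this makes the measure bookkeeping transparent. Two presentations of an element of $H(M;\nu)$ as a combination of the $\d_{m\CK_M}$ have a common refinement, so it suffices to treat $\CK_M'\subseteq\CK_M\subseteq\CI_M$ with $M_n\subseteq\CK_M'$: then $\CK_M\CI_n=\bigsqcup_{x}x\,(\CK_M'\CI_n)$ with $[\CK_M\CI_n:\CK_M'\CI_n]=[\CK_M:\CK_M']=\mu_M(\CK_M)/\mu_M(\CK_M')$, and using $\d_\nu(mx)=\d_\nu(m)$ (as $\d_\nu$ is trivial on the compact group $\CI_M$) one gets the honest identity
\[
\frac{\mu_M(\CK_M)}{\mu_G(\CK_M\CI_n)}\,\d_{m\CK_M\CI_n}=\sum_{x}\frac{\mu_M(\CK_M')}{\mu_G(\CK_M'\CI_n)}\,\d_{mx\CK_M'\CI_n}.
\]
Hence, for each large $n$, the formula with $\CK=\CI_n$ defines a linear map $i_\nu^{(n)}\colon H(M;\nu)\to\bar H$. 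Its image lies in $\bar H(\bar\nu)$: by $M(\nu)\subseteq G(\bar\nu)$ (Proposition~\ref{M-G-nu}) and admissibility of the Newton strata (\cite[Theorem~3.2]{hecke-1}), $m\CK_M\CI_n\subseteq G(\bar\nu)$ once $n$ is large.

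\emph{Independence of $\CK$: cofinal family.} Next, $i_\nu^{(n)}$ is independent of $n$ for $n$ large. By the reduction above one may take $\CK_M=M_{N}$ with $N$ large; writing $N=n+L$ for a suitable constant $L=L(w)$ coming from the quasi-positivity exponent of Proposition~\ref{+1}, compare $\CK=\CI_{n'}$ with $\CK=\CI_{n'+1}$. The Iwahori factorization $M_{n+L}\CI_{n'}=N^-_{v,n'}\,M_{n+L}\,N_{v,n'}$ exhibits $M_{n+L}\CI_{n'}$ as a disjoint union of $M_{n+L}\CI_{n'+1}$-cosets indexed by $N^-_{v,n'}/N^-_{v,n'+1}\times N_{v,n'}/N_{v,n'+1}$; the number of cosets equals the ratio of the Haar measures, so one is reduced to
\[
\d_{m\,u^-u\,M_{n+L}\CI_{n'+1}}\equiv\d_{m\,M_{n+L}\CI_{n'+1}}\pmod{[H,H]},\qquad u^-\in N^-_{v,n'},\ u\in N_{v,n'},
\]
which, since $u^-u\in\CI_{n'}\subseteq\CI_{n+L}$, is exactly Proposition~\ref{Levi-1}(2) — whose proof absorbs the factor $u^-u$ by the convergent conjugation $z=z_1z_2\cdots$, the quasi-positivity of $m$ supplying the contractions. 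Iterating in $n'$ gives a single map $i_\nu\colon H(M;\nu)\to\bar H$.

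\emph{Independence of $\CK$: general case, and descent.} Now let $\CK$ be sufficiently small, say $\CK\subseteq\CI_{n_0}$ with $n_0$ large relative to $m,\CK_M,w$. Its Iwahori factorization $\CK=\CK^-\CK^0\CK^+$ (valid for small $\CK$) has $\CK^0=\CK\cap M\subseteq M_{n_0}\subseteq\CK_M$ and $\CK^\pm=\CK\cap N^\pm_v\subseteq N^\pm_{v,n_0}$; thus the $M$-direction of $\CK$ is absorbed into $\CK_M$, and $\CK_M\CK$ differs from $\CK_M$ only by deep subsets of $N_v$ and $N^-_v$. Using this, together with the normality of $\CI_{n'}$ for $n'\ge n_0$ with $\CI_{n'}\subseteq\CK$, one decomposes $m\CK_M\CK$ into $\d$-terms attached to cosets of $\CK_M\CI_{n'}$ (the Haar factors again matching) and is reduced — exactly as in Proposition~\ref{Levi-1}, now with a perturbation lying deep in $N^\pm_v$ and an element of $\CI_M\dot w\CI_M\cap M(\nu)$ playing the role of $m$ — to absorbing that perturbation by the quasi-positive conjugation. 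This yields $i_\nu^\CK=i_\nu$ for all sufficiently small $\CK$, proving part (1). For part (2), extend the formula to arbitrary open compact $\CK_M'\subseteq M$ (the same arguments apply); then for $f=\d_{m\CK_M}$ and $x\in M$ one has ${}^xf=\d_{(xmx^{-1})(x\CK_Mx^{-1})}$, and conjugating the set $(xmx^{-1})(x\CK_Mx^{-1})\CK$ by $x^{-1}$ and using that $\d_\nu$ and $\mu_M$ are conjugation-invariant shows that $i_\nu({}^xf)$ equals the value of the formula for $f$ relative to the (again sufficiently small) subgroup $x^{-1}\CK x$; by part (1) this is $i_\nu(f)$. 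Hence $i_\nu$ kills $f-{}^xf$ and descends to $\bar i_\nu\colon\bar H(M;\nu)\to\bar H$.

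\emph{Main obstacle.} The substantive point is the independence statement, and inside it the passage from the cofinal family $\{\CI_n\}$ to arbitrary sufficiently small $\CK$: one must rewrite the sets $m\CK_M\CK$ in the form handled by Proposition~\ref{Levi-1} and control the finitely many coset representatives uniformly. This is exactly where one needs the elements of $M(\nu)$ to be quasi-positive (Proposition~\ref{+1}) rather than merely $P_v$-positive, which fails in general.
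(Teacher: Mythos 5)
The overall strategy---invoke quasi\hyp positivity (Proposition~\ref{+1}) through Proposition~\ref{Levi-1}(2), match Haar measures, and descend to the cocenter via the conjugation formula for $i_\nu({}^h\d_{m\CK_M})$---is the same as the paper's, and your descent step for part~(2) is essentially the paper's argument verbatim.  But there is a genuine gap in your passage from the cofinal family $\{\CI_{n'}\}$ to an arbitrary sufficiently small $\CK$.  You assert that a sufficiently small open compact subgroup $\CK\subseteq\CI_{n_0}$ admits an Iwahori factorization $\CK=\CK^-\CK^0\CK^+$.  That is false: no matter how small, a general open compact subgroup of $G$ need not be decomposed by $P_\nu$; the groups $\CI_n$ are special in this respect, and the whole point of the independence statement is to allow $\CK$ that are \emph{not} of this form.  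Since this is the step you yourself flag as the ``main obstacle,'' the proposal does not actually close it.

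The paper's route avoids factoring $\CK$ altogether.  Fix $l$ a multiple of $i_{v,w}\ell(w)$ with $M_l\subseteq\CK_M$, and take any $\CK\subseteq\CI_l$.  Choose $n$ with $\CI_{l+n}\subseteq\CK$.  One writes $\d_{m\CK_M\CK}=\sum_{m'\in m\CK_M/M_l}\d_{m'M_l\CK}$, and decomposes each $m'M_l\CK$ into translates $m'gM_l\CI_{l+n}$ with $g\in\CI_l$; Proposition~\ref{Levi-1}(2) (applicable because every $m'\in m\CK_M\subseteq\CI_M\dot w\CI_M\cap M(\nu)$ satisfies the quasi\hyp positivity inclusion, by Proposition~\ref{+1}) shows each such term is congruent to $\d_{m'M_l\CI_{l+n}}$ modulo $[H,H]$, while the Haar measures account exactly for the scalar $\mu_G(M_l\CK)/\mu_G(M_l\CI_{l+n})$.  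Applying the same reduction to a second $\CK'$ with $\CI_{l+n}\subseteq\CK'$ lands both on the same common copy $\d_{m\CK_M\CI_{l+n}}$.  No Iwahori factorization of $\CK$ is needed---only that $\CK$ contains a deep $\CI_{l+n}$ and sits inside $\CI_l$.  Your cofinal comparison $\CI_{n'}$ versus $\CI_{n'+1}$ and the surrounding coset bookkeeping (e.g.\ the identity $M_{n+L}\CI_{n'}=N^-_{v,n'}M_{n+L}N_{v,n'}$ and the form $\d_{m\,u^-u\,M_{n+L}\CI_{n'+1}}$) are correct in spirit but harder to make precise than the paper's one\hyp step comparison, and in any event do not survive the jump to general $\CK$.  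Replacing the Iwahori\hyp factorization step with the paper's direct coset decomposition of $M_l\CK$ into $\CI_l$\hyp translates of $M_l\CI_{l+n}$ would repair the argument.
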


\begin{remark}
Unlike the map $j_{v, n}$, the map $\bar i_\nu$ does not send $\bar H(M, M_n; \nu)$ to $\bar H(G, \CI_n; \bar \nu)$. One needs to replace $\CI_n$ by a smaller open compact subgroup of $G$. However, by the Iwahori-Matsumoto presentation of $\bar H(M, M_n; \nu)$ (\cite[Theorem 4.1]{hecke-1}) and Proposition \ref{+1}, there exists a positive integer $n'$ (depending on $\nu$) such that $\bar i_\nu: \bar H(M, M_n; \nu) \to \bar H(G, \CI_{n+n'}; \bar \nu)$ for any $n \in \BN$. 
\end{remark}

\begin{proof}
(1) Let $v$ be the $V$-factor of $\nu$. Let $w \in \tW(M)$ with $m \in \CI_M \dot w \CI_M$. Let $i_{v, w}$ be an positive integer in Proposition \ref{+1}. Let $l$ be a multiple of $i_{v, w} \ell(w)$ with $M_l \subset \CK_M$. By Proposition \ref{Levi-1} (2),  for any $n \in \BN$ and $g \in \CI_l$, we have $$\d_{m' g M_l \CI_{l+n}} \equiv \d_{m' M_l \CI_{l+n}} \mod [H, H].$$
Let $\CK, \CK'$ be open compact subgroups of $G$ with $\CK, \CK' \subset \CI_l$. Let $n \in \BN$ with $\CI_{l+n} \subset \CK, \CK'$. Now we have \begin{align*} \d_{m \CK_M \CK} &=\sum_{m' \in m \CK_M/M_l} \d_{m' M_l \CK} \equiv \sum_{m' \in m \CK_M/M_l} \frac{\mu_G(M_l \CK)}{\mu_G(M_l \CI_{l+n})} \d_{m' M_l \CI_{l+n}} \\ &=\frac{\mu_G(M_l \CK)}{\mu_G(M_l \CI_{l+n})} \d_{m \CK_M \CI_{l+n}} \mod [H, H].\end{align*}

As $\CK_M$ is stable under the right multiplication of $M_l$, we have $\mu_G(\CK_M \CI_{l+n})=\sharp(\CK_M/M_l) \mu_G(M_l \CI_{l+n})$ and $\frac{\mu_G(M_l \CK)}{\mu_G(M_l \CI_{l+n})}=\frac{\mu_G(\CK_M \CK)}{\mu_G(\CK_M \CI_{l+n})}$. Thus for any $n \in \BN$, we have $$\frac{\mu_M(\CK_M)}{\mu_G(\CK_M \CK)} \d_{m \CK_M \CK} \equiv \frac{\mu_M(\CK_M)}{\mu_G(\CK_M \CI_{l+n})} \d_{m \CK_M \CI_{l+n}} \mod [H, H].$$ Similarly, $\frac{\mu_M(\CK_M)}{\mu_G(\CK_M \CK')} \d_{m \CK_M \CK'} \equiv \frac{\mu_M(\CK_M)}{\mu_G(\CK_M \CI_{l+n})} \d_{m \CK_M \CI_{l+n}} \mod [H, H]$. Part (1) is proved.

(2) By \cite[\S 3.3 (2)]{hecke-1}, $[H(M), H(M)]=\oplus_{\nu \in \aleph_M} ([H(M), H(M)] \cap H(M)_{\nu})$, the kernel of the map $H(M)_\nu \to \bar H(M)_\nu$ is spanned by $\d_{m \CK_M}-{}^h \d_{m \CK_M}$ for $h, m \in M$ and open compact subgroup $\CK_M$ of $\CI_M$ such that $m \CK_m \subset M_\nu$. It remains to prove that $i_\nu(\d_{m \CK_M})=i_\nu({}^h \d_{m \CK_M})$. 

Set $m'=h m h \i$ and $\CK'_M=h \CK_M h \i$. By part (1), there exists a sufficiently small open compact subgroup $\CK$ of $G$ such that \begin{gather*} i_{\nu}(\d_{m \CK_M})\equiv \d_{\nu}(m)^{-\frac{1}{2}} \frac{\mu_M(\CK_M)}{\mu_G(\CK_M \CK)} \d_{m \CK_M \CK} \mod [H, H], \\ i_{\nu}(\d_{m' \CK'_M})\equiv \d_{\nu}(m')^{-\frac{1}{2}} \frac{\mu_M(\CK'_M)}{\mu_G(\CK'_M \CK')} \d_{m' \CK'_M \CK'} \mod [H, H].\end{gather*} Here $\CK'=h \CK h \i$. 

We have $\d_{m' \CK'_M \CK'}=\d_{h (m \CK_M \CK) h \i} \equiv \d_{m \CK_M \CK} \mod [H, H]$. Part (2) is proved. \qedhere







\end{proof}

\subsection{}\label{M-M'} In the rest of this section, we show that the maps $\bar i_*$ are compatible with conjugating the Levi subgroups. 

For any semistandard Levi subgroup $M$, we have a natural projection $$X_*(Z)_{\Gal(\bar F/F)}/\BZ \Phi_M^\vee \cong \Omega_M$$ and a natural map $V \mapsto V_+^M$. The natural action of $W_0$ on $X_*(Z)_{\Gal(\bar F/F)} \times V$ induces the following commutative diagram for any $w \in W_0$
\[
\xymatrix{
X_*(Z)_{\Gal(\bar F/F)} \times V \ar[r]^-{w \cdot} \ar[d] & X_*(Z)_{\Gal(\bar F/F)} \times V \ar[d] \\
\aleph_M \ar[r] & \aleph_{\dot w M \dot w \i}.
}
\]

We denote the induced map $\aleph_M \to \aleph_{\dot w M \dot w\i}$ still by $w \cdot$. If moreover, $w \in W^M$, i.e. $w$ sends the positive roots of $M$ to the positive roots of $\dot w M \dot w \i$, then we have $\dot w \CI_M \dot w \i=\CI_{\dot w M \dot w \i}$. By definition, the $M$-fundamental alcove is the unique $M$-alcove that contains the $G$-fundamental alcove. Since the conjugation by $\dot w$ sends the Iwahori-subgroup of $M$ to the Iwahori-subgroup of $\dot w M \dot w \i$, it also sends the $M$-fundamental alcove to the $\dot w M \dot w \i$-fundamental alcove, and thus induces a length-preserving map from $\tW(M)$ to $\tW(\dot w M \dot w \i)$. In particular, the conjugation by $w$ sends the minimal length elements of $\tW(M)$ (with respect to $\ell_M$) to the minimal length elements of $\tW(\dot w M \dot w \i)$ (with respect to $\ell_{\dot w M \dot w \i}$). Therefore, by the definition of Newton strata, we have that 

(a) Let $M$ be a semistandard Levi subgroup $M$ and $\nu \in \aleph_M$. Let $w \in W_0$ and $M'=\dot w M \dot w \i$, then $$\dot w M(\nu) \dot w \i=M'(w(\nu)).$$

\begin{proposition}\label{compatible}
Let $M$ be a semistandard Levi subgroup and $\nu \in \aleph_M$ and $w \in W_0$. Then for any $m \in M$, and an open compact subgroup $\CK_M$ of $\CI_M$ with $m \CK_M \subset M_\nu$ and $\dot w \CK_M \dot w \i \subset \CI_{\dot w M \dot w \i}$, we have
$$i_\nu(\d_{m \CK_M})=i_{w(\nu)}(\d_{\dot w m \CK_M \dot w \i}) \in \bar H.$$
\end{proposition}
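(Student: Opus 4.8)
The plan is to reduce the equality to two facts: conjugation by $\dot w$ is an \emph{inner} automorphism of $G$, so it acts trivially on the cocenter $\bar H$ and preserves the Haar measure $\mu_G$; and the resulting isomorphism of topological groups $M\to M':=\dot w M\dot w\i$ carries the normalized Haar measure $\mu_M$ to $\mu_{M'}$ and the modulus character $\d_\nu$ to $\d_{w(\nu)}$. Once these are in hand, unwinding the definitions of $i_\nu$ and $i_{w(\nu)}$ turns the two sides of the asserted equality into the same element of $\bar H$.

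To set things up I would first invoke \S\ref{M-M'} (a) to get $\dot w M(\nu)\dot w\i=M'(w(\nu))$, so that $\dot w m\CK_M\dot w\i\subset M'(w(\nu))$ and $i_{w(\nu)}(\d_{\dot w m\CK_M\dot w\i})$ is defined. Then I would choose an open compact subgroup $\CK$ of $G$ small enough that Theorem~\ref{levi}(1) evaluates
\[
i_\nu(\d_{m\CK_M})\equiv\d_\nu(m)^{-\frac12}\,\frac{\mu_M(\CK_M)}{\mu_G(\CK_M\CK)}\,\d_{m\CK_M\CK}\pmod{[H,H]},
\]
and small enough that $\CK':=\dot w\CK\dot w\i$ is likewise sufficiently small for the corresponding evaluation of $i_{w(\nu)}$ relative to $M'$; this is possible because shrinking $\CK$ shrinks $\CK'$, and by Theorem~\ref{levi}(1) neither class depends on the subgroup chosen. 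With these choices,
\[
i_{w(\nu)}(\d_{\dot w m\CK_M\dot w\i})\equiv\d_{w(\nu)}(\dot w m\dot w\i)^{-\frac12}\,\frac{\mu_{M'}(\dot w\CK_M\dot w\i)}{\mu_G(\dot w\CK_M\dot w\i\CK')}\,\d_{\dot w m\CK_M\dot w\i\CK'}\pmod{[H,H]}.
\]

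Then I would match the four ingredients against their counterparts for $i_\nu(\d_{m\CK_M})$. Since $\dot w\CK_M\dot w\i\CK'=\dot w(\CK_M\CK)\dot w\i$, the relevant $G$-coset is $\dot w m\CK_M\dot w\i\CK'=\dot w(m\CK_M\CK)\dot w\i$, so $\d_{\dot w m\CK_M\dot w\i\CK'}={}^{\dot w}\d_{m\CK_M\CK}\equiv\d_{m\CK_M\CK}\pmod{[H,H]}$, because $\dot w\in G$ and $f-{}^xf\in[H,H]$ for all $x\in G$; likewise $\mu_G(\dot w\CK_M\dot w\i\CK')=\mu_G\bigl(\dot w(\CK_M\CK)\dot w\i\bigr)=\mu_G(\CK_M\CK)$ since conjugation by an element of $G$ preserves $\mu_G$. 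For the modulus character, write $v$ for the $V$-factor of $\nu$; since $M=M_v$ the Weyl group of $M$ fixes $v$, so $w(\nu)$ has $V$-factor $wv$, and $\dot w P_v\dot w\i=P_{wv}$, whence $\d_{w(\nu)}(\dot w m\dot w\i)=\d_{wv}(\dot w m\dot w\i)=\d_v(m)=\d_\nu(m)$ by the conjugation-equivariance of the modulus character. Finally, conjugation by $\dot w$ pushes $\mu_M$ to a Haar measure $c\,\mu_{M'}$ on $M'$; since by the normalizations and the unimodularity of $M$ and $M'$ every pro-$p$ Iwahori subgroup of $M$ (resp.\ of $M'$) has volume $1$, and conjugation by $\dot w$ carries a pro-$p$ Iwahori subgroup of $M$ to one of $M'$, we get $c=1$, hence $\mu_{M'}(\dot w\CK_M\dot w\i)=\mu_M(\CK_M)$. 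Assembling the four identities yields
\[
i_{w(\nu)}(\d_{\dot w m\CK_M\dot w\i})\equiv\d_\nu(m)^{-\frac12}\,\frac{\mu_M(\CK_M)}{\mu_G(\CK_M\CK)}\,\d_{m\CK_M\CK}\equiv i_\nu(\d_{m\CK_M})\pmod{[H,H]},
\]
which is the claim.

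The point requiring the most care is the last one, pinning down the constant $c$ relating $\mu_M$ and $\mu_{M'}$: since $w$ need not lie in $W^M$, conjugation by $\dot w$ does not send $\CI_M$ to $\CI_{M'}$, so one cannot simply equate the distinguished pro-$p$ Iwahori subgroups and must instead argue through the $M'$-conjugacy of all pro-$p$ Iwahori subgroups of $M'$ together with unimodularity. Everything else is immediate from $\dot w\in G$ --- conjugation by it being inner, hence trivial on $\bar H$ and measure-preserving on $G$ --- and from the conjugation-equivariance of the modulus character $\d_\nu$.
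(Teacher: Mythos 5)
Your proof is correct and takes essentially the same route as the paper's: both sides are evaluated via Theorem~\ref{levi}(1) using $\CK$ and $\CK'=\dot w\CK\dot w\i$, and the equality reduces to $\d_{\dot w(m\CK_M\CK)\dot w\i}\equiv\d_{m\CK_M\CK}\pmod{[H,H]}$ because conjugation by $\dot w\in G$ is trivial on the cocenter. You simply make explicit the matching of the scalar prefactors (modulus character, $\mu_M$ vs.\ $\mu_{M'}$ via unimodularity and volume-one pro-$p$ Iwahori subgroups, $G$-invariance of $\mu_G$), which the paper leaves implicit by saying the proof is ``similar to Theorem~\ref{levi}(2).''
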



\begin{proof}
The proof is similar to the proof of Theorem \ref{levi} (2). 

Set $M'=\dot w M \dot w \i$, $m'=\dot w m \dot w \i$ and $\CK_{M'}=\dot w \CK_M \dot w \i$. By Theorem \ref{levi} (1), there exists a sufficiently small open compact subgroup $\CK$ of $G$ such that \begin{gather*} i_{\nu}(\d_{m \CK_M})\equiv \d_{\nu}(m)^{-\frac{1}{2}} \frac{\mu_M(\CK_M)}{\mu_G(\CK_M \CK)} \d_{m \CK_M \CK} \mod [H, H], \\ i_{w(\nu)}(\d_{m' \CK_{M'}})\equiv \d_{w(\nu)}(m')^{-\frac{1}{2}} \frac{\mu_M(\CK_{M'})}{\mu_G(\CK_{M'} \CK')} \d_{m' \CK_{M'} \CK'} \mod [H, H].\end{gather*} Here $\CK'=\dot w \CK \dot w \i$. 

We have $\d_{m' \CK'_M \CK'}=\d_{\dot w (m \CK_M \CK) \dot w \i} \equiv \d_{m \CK_M \CK} \mod [H, H]$. The statement is proved. \qedhere

\end{proof}

\begin{corollary}
Let $M$ be a semistandard Levi subgroup of $G$ and $\nu \in \aleph_M$ with $M=M_\nu$. Then for any $w \in W_0$, $$\Im(\bar i_\nu: \bar H(M; \nu) \to \bar H)=\Im(\bar i_{w(\nu)}: \bar H(\dot w M \dot w \i; w(\nu)) \to \bar H).$$
\end{corollary}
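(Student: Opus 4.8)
The plan is to deduce this corollary directly from Proposition \ref{compatible}, which already identifies the two induction maps on the level of individual basis elements, together with the observation (in \S\ref{M-M'} (a)) that conjugation by $\dot w$ carries $M(\nu)$ onto $M'(w(\nu))$ for $M' = \dot w M \dot w\i$. First I would fix $w \in W_0$ and set $M' = \dot w M \dot w\i$. The goal is the equality of two submodules of $\bar H$, so it suffices to check mutual inclusion; by symmetry (replacing $w$ by $w\i$ and interchanging $M, M'$) it is enough to prove one inclusion, say $\Im(\bar i_\nu) \subset \Im(\bar i_{w(\nu)})$.

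The key step is then a surjectivity-of-basis argument. The module $H(M;\nu)$ is spanned by the functions $\d_{m \CK_M}$ with $m \in M$ and $\CK_M$ a sufficiently small open compact subgroup of $\CI_M$ satisfying $m\CK_M \subset M(\nu)$, and $\bar i_\nu$ is induced by $i_\nu$ on these spanning elements. Given such $\d_{m\CK_M}$, I would shrink $\CK_M$ if necessary so that also $\dot w \CK_M \dot w\i \subset \CI_{M'}$ (possible since $\dot w \CI_M \dot w\i$ and $\CI_{M'}$ are commensurable, or more simply by taking $\CK_M$ inside $\CI_M \cap \dot w\i \CI_{M'}\dot w$, which is again an open compact subgroup of $\CI_M$); the value $i_\nu(\d_{m\CK_M})$ is unchanged under this shrinking by Theorem \ref{levi}(1) (independence of the auxiliary $\CK$) combined with the fact that $\d_{m\CK_M} = \sum_{m' \in m\CK_M/\CK_M'} \d_{m'\CK_M'}$ decomposes compatibly. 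Now $\dot w m \CK_M \dot w\i = m' \CK_{M'}$ with $m' = \dot w m \dot w\i$ and $\CK_{M'} = \dot w \CK_M \dot w\i$, and by \S\ref{M-M'}(a) we have $m'\CK_{M'} \subset M'(w(\nu))$, so $\d_{m'\CK_{M'}}$ lies in $H(M'; w(\nu))$. Proposition \ref{compatible} then gives $i_\nu(\d_{m\CK_M}) = i_{w(\nu)}(\d_{m'\CK_{M'}})$, which lies in $\Im(i_{w(\nu)})$, hence in $\Im(\bar i_{w(\nu)})$ once we pass to $\bar H(M;\nu)$ and $\bar H(M';w(\nu))$ (both maps factor through the cocenters by Theorem \ref{levi}(2)). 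Running over all spanning elements gives the inclusion $\Im(\bar i_\nu) \subset \Im(\bar i_{w(\nu)})$, and the reverse inclusion is obtained by the same argument applied to $w\i$.

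The main obstacle — really the only subtlety — is the bookkeeping around open compact subgroups: one must ensure that the same $\CK_M$ can be used to express $\d_{m\CK_M}$ as an element of $H(M;\nu)$ \emph{and} is small enough that its $\dot w$-conjugate sits inside $\CI_{M'}$, so that Proposition \ref{compatible} applies verbatim; this is exactly the hypothesis $\dot w \CK_M \dot w\i \subset \CI_{\dot w M \dot w\i}$ in that proposition, and it is harmless because shrinking $\CK_M$ does not change $i_\nu(\d_{m\CK_M})$. Everything else is formal: the maps are defined on spanning sets, Proposition \ref{compatible} matches them elementwise, and $W_0$ acts by (length-preserving, after adjusting by $W^M$) conjugation on the relevant data.
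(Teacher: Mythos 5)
Your proof is correct and is exactly the argument the paper implicitly relies on (the corollary is stated immediately after Proposition \ref{compatible} with no separate proof); you have just written out the obvious deduction. The bookkeeping you flag — shrinking $\CK_M$ so that $\dot w\CK_M\dot w\i\subset \CI_{\dot w M\dot w\i}$, which is harmless since $i_\nu(\d_{m\CK_M})=\sum_{m'}i_\nu(\d_{m'\CK_M'})$ under refinement — is the only content beyond directly citing Proposition \ref{compatible} and \S\ref{M-M'}(a), and you handle it correctly.
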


\section{The image of the map $\bar i_\nu$}

The main result of this section is 

\begin{theorem}\label{BL}
Let $M$ be a semistandard Levi subgroup and $\nu \in \aleph_M$ with $M=M_\nu$. Then the image of the the map $\bar i_{\nu}: \bar H(M; \nu) \to \bar H$ equals $\bar H(\bar \nu)$. 
\end{theorem}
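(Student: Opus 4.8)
The goal is to show $\Im(\bar i_\nu) = \bar H(\bar\nu)$. The argument splits into two inclusions, and the bulk of the work is in the surjectivity direction. First I would prove the easy inclusion $\Im(\bar i_\nu) \subseteq \bar H(\bar\nu)$. By Proposition \ref{M-G-nu} (i.e. the fact, stated in the outline as $M(v) \subset G(\bar v)$), every element of $M(\nu)$ lies in $G(\bar\nu)$; and by admissibility of Newton strata (\cite[Theorem 3.2]{hecke-1}), if $X$ is an open compact subset of $M(\nu)$ then for a sufficiently small open compact subgroup $\CK$ of $G$ the enlarged set $X\CK$ is still contained in $G(\bar\nu)$. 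Since by Theorem \ref{levi}(1) the value of $\bar i_\nu(\d_{m\CK_M})$ is computed using such a $\CK$, its support lies in $G(\bar\nu)$ and hence $\bar i_\nu(\d_{m\CK_M}) \in \bar H(\bar\nu)$. Passing to the quotient gives the inclusion.

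For the reverse inclusion $\bar H(\bar\nu) \subseteq \Im(\bar i_\nu)$, I would invoke the Iwahori-Matsumoto presentation of $\bar H(G;\bar\nu)$ from \cite[Theorem 4.1]{hecke-1}: the space $\bar H(\bar\nu)$ is spanned by the classes $\d_{\CI\dot w\CI} + [H,H]$ for $w \in \tW_{\min}$ with $\pi(w) = \bar\nu$ (together with the translates under smaller congruence subgroups, but the minimal-length generators suffice). So it is enough to show that each such class lies in the image. Here the key device is the notion of $P$-alcove element from \cite{GHKR}: if $w \in \tW_{\min}$ has $\bar\nu_w = \bar v$, then up to conjugacy (which does not change the class in $\bar H$) one may arrange $w$ to be a $P_{\bar v}$-alcove element, meaning $w$ lies in $\tW(M)$-coset structure compatible with the parabolic $P_{\bar v}$ in such a way that the Iwahori double coset $\CI\dot w\CI$ in $G$ is, modulo $[H,H]$, the image under the enlargement map of the corresponding $M$-double coset $\CI_M\dot w_M\CI_M$ inside $M(\nu')$ for an appropriate $\nu'$ in the $W_0$-orbit. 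Concretely: a $P$-alcove element $w$ satisfies $\dot w N_{\bar v, n}\dot w^{-1} \supseteq N_{\bar v, n}$-type containments forcing the unipotent radical directions to contribute trivially, so that $\d_{\CI\dot w\CI}$ coincides mod $[H,H]$ with the image of $\d_{\CI_M\dot w\CI_M}$ under precisely the enlargement recipe defining $\bar i_\nu$ (after using Proposition \ref{compatible} to move from $\nu'$ to $\nu$ via a Weyl element). Thus $\d_{\CI\dot w\CI} + [H,H] \in \Im(\bar i_\nu)$.

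The step I expect to be the main obstacle is establishing that every minimal-length $w$ with $\pi(w)=\bar\nu$ is conjugate (in $\tW$, hence in $G$ up to $[H,H]$) to a $P_{\bar v}$-alcove element lying in $\tW(M_v)$, and that for such an element the Iwahori double coset in $G$ really is identified mod $[H,H]$ with the $\bar i_\nu$-image of the corresponding $M$-double coset. The first part is a combinatorial statement about the Iwahori-Weyl group — it is the analogue of the "P-alcove reduction" results of \cite{GHKR, GHKR2} and should follow from the structure of minimal length elements (Theorem \ref{min}) together with the observation that $\bar\nu_w = \bar v$ forces the straight part $x$ in the standard triple to be a translation-like element whose Newton point is $\bar v$, so $x$ already normalizes and lies in $M_v$-related data. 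The second part requires a careful double-coset computation: one must decompose $\CI\dot w\CI$ using the Iwahori factorization $\CI = \CI_{N}\CI_M\CI_{N^-}$ relative to $P_{\bar v}$ and use the $P$-alcove inequalities to absorb the $N$ and $N^-$ factors via conjugation (exactly as in the proof of Proposition \ref{Levi-1} and Corollary \ref{str+}), matching the volume normalization $\d_\nu(m)^{-1/2}\mu_M(\CK_M)/\mu_G(\CK_M\CK)$ appearing in the definition of $\bar i_\nu$. Once both parts are in place, combining the two inclusions yields $\Im(\bar i_\nu) = \bar H(\bar\nu)$, which is the assertion of Theorem \ref{BL}.
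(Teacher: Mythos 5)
Your proposal is essentially the paper's proof: the easy inclusion via Proposition \ref{M-G-nu} and admissibility of Newton strata, then surjectivity via the Iwahori--Matsumoto spanning set of $\bar H(\bar\nu)$, the $P$-alcove machinery, a reduction of $\CI\dot w\CI$ into the Levi, and Proposition \ref{compatible} to compare Levi data across $W_0$-conjugates. The one point worth sharpening: the paper does not conjugate $w$ to a $P_{\bar v}$-alcove element in $\tW(M_v)$; rather, it cites \cite[Lemma 4.4.3, Proposition 4.4.6]{GHN} to conclude that every $w \in \tW_{\min}$ with $\pi(w)=\bar\nu$ is \emph{already} a $\nu_w$-alcove element (for the --- generally non-dominant --- Newton point $\nu_w$), hence already lies in $\tW(M_{\nu_w})$ with the requisite alcove containment $N_{\nu_w} \cap \dot w \CI \dot w\i \subset N_{\nu_w}\cap\CI$ (note the direction of the inclusion, opposite to what you wrote). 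One then reduces $\CI\dot w\CI$ into $\dot w\CI_{M_{\nu_w}}$ via Theorem \ref{Levi-2}, lands in the image of $\bar i_{\nu'}$ for $\nu'=\pi_{M_{\nu_w}}(w)$, and only at the final step transfers to $\bar i_\nu$ by Proposition \ref{compatible}; trying instead to move $w$ itself into $\tW(M_v)$ by conjugation in $\tW$ would needlessly complicate the double-coset bookkeeping. Aside from that, the ingredients and their roles are the ones the paper actually uses.
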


\smallskip

We first compare the Newton strata of $G$ and its Levi subgroups. 

\begin{proposition}\label{M-G-nu}
Let $M$ be a semistandard Levi subgroup and $\nu \in \aleph_M$ with $M_\nu=M$. Then we have $M(\nu) \subset G(\bar \nu)$. 
\end{proposition}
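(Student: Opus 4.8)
The plan is to reduce the inclusion to a single Iwahori double coset inside the Levi $M$, and then to pin down the $G$-Newton invariant of its elements by passing to large powers. By the construction of the Newton strata, $M(\nu)=\bigcup_w M\cdot_\th(\CI_M\dot w\CI_M)$, the union over $w\in\tW(M)$ of minimal $\ell_M$-length in its $\tW(M)$-conjugacy class and with $\pi_M(w)=\nu$. As conjugation by $M$ is conjugation by $G$ and $G(\bar\nu)$ is $G$-conjugation stable, it is enough to show $\CI_M\dot w\CI_M\subseteq G(\bar\nu)$ for each such $w$. By Proposition~\ref{compatible} and \S\ref{M-M'}(a), together with the identity $\dot u\,G(\bar\nu)\,\dot u^{-1}=G(\bar\nu)$ (the image of $u(\nu)$ in $\aleph$ is again $\bar\nu$), I may conjugate $\nu$ by a suitable $u\in W_0$ and assume the $V$-component $v$ of $\nu$ is $G$-dominant; then $M=M_v$ is a standard Levi, $\bar v=v$, the element $v$ is central in $M$, and $\langle a,v\rangle>0$ for every root $a$ of $N_v$. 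Finally, by the $M$-analogues of Theorem~\ref{min} and of \S\ref{basic-min}(a) --- only conjugations enter, as $w$ is $\ell_M$-minimal --- I may take $w=ux$ for an $M$-standard triple $(x,K_M,u)$, so that $x$ is $\tW(M)$-straight with $\bar\nu_x=v$ and $x^L=t^{Lv}$ for some $L\ge1$.

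Fix $m\in\CI_M\dot w\CI_M$. I would first note that $m$ lands in the right $\Omega$-component: the natural map $G\to\Omega$ restricted to $M$ factors through $\Omega_M$, and is conjugation invariant and constant on $\CI_M$-double cosets, so on $\CI_M\dot w\CI_M$ it takes the value of the $\Omega_M$-component of $\nu$, whose image in $\Omega$ is that of $\bar\nu$. It then remains to see that the $G$-Newton point of $m$ is $\bar v=v$. Here I would use the $M$-analogue of \S\ref{basic-min}(c): $m^{Ln}\in(\CI_M W_{K_M}\CI_M)(\CI_M\dot x^{Ln}\CI_M)=(\CI_M W_{K_M}\CI_M)(\CI_M t^{Lnv}\CI_M)$, and since $t^{Lnv}$ is central in $M$ one can move it outside the bounded factor and write $m^{Ln}$ as an $M$-conjugate of $d_n\,t^{Lnv}$ with $d_n$ lying in the \emph{fixed} bounded set $\CI_M W_{K_M}\CI_M$. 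Since $v$ is $G$-dominant, $\CI t^{Lnv}\CI$ lies in the Newton stratum $G(Lnv)$, and the extra bounded factor $d_n$ only perturbs lengths by an amount bounded independently of $n$. Because the $G$-Newton point of $m^{Ln}=(m^L)^n$ equals $n$ times that of $m^L$, while the minimal-length element of its $\tW$-conjugacy class has length at least $\langle\text{(its Newton point)},2\rho\rangle$ and at most $\langle Lnv,2\rho\rangle+O(1)$, the $G$-Newton point of $m^{Ln}$ is forced to be $Lnv$ for all $n$; hence the $G$-Newton point of $m$ is $v$. Combined with the $\Omega$-statement this gives $m\in G(\bar\nu)$.

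The step I expect to be the real obstacle is the last one: ensuring that multiplying the very dominant central translation $t^{Lnv}$ by the bounded factor $d_n$ cannot shift the $G$-Newton point. Making this precise means tracking, along the family $d_n t^{Lnv}$, the minimal-length representative of its $\tW$-conjugacy class produced by Lemma~\ref{seq-n}, and checking that for $n\gg0$ no split-off term of \S\ref{basic-min}(b) ever appears that would lower the Newton point --- this is exactly the place where the strict positivity $\langle a,v\rangle>0$ on the roots of $N_v$ is needed, as it keeps every length that occurs within a fixed additive constant of the Newton bound $\langle Lnv,2\rho\rangle$. An alternative, more combinatorial route is to apply Theorem~\ref{min} directly to $ux\in\tW$: one has $\pi(ux)=\bar\nu$ already (the rational Newton point is intrinsic and the $\Omega$'s are compatible), so the theorem yields $ux\to u'x'$ with $u'x'\in\tW_{\min}$ and $\pi(u'x')=\pi(ux)=\bar\nu$, and one then shows that for elements of the \emph{small} coset $\CI_M\dot w\CI_M$ the conjugations of \S\ref{basic-min}(a)--(b) realizing this reduction never hit the split-off alternative, so $m$ is $G$-conjugate into $\CI\dot{(u'x')}\CI\subseteq X_{\bar\nu}$ and hence $m\in G(\bar\nu)$.
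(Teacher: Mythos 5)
Your setup (conjugate so $M$ is standard and $v$ is $G$-dominant, reduce to a single coset $\CI_M\dot w\CI_M$, pick an $M$-standard triple $(x,K_M,u)$ with $w\rightharpoonup ux$, and use \S\ref{basic-min}(c) to express $m^{Ln}$ as a bounded element times $t^{Lnv}$) matches the first several lines of the paper's argument. But the closing step is not right, and it is precisely the step you flagged as the obstacle. Knowing that the $\tW$-conjugacy class of something containing $m^{Ln}$ contains an element of length at most $\langle Lnv,2\rho\rangle+O(1)$, and that every element with Newton point $\mu$ has length $\geq\langle\mu,2\rho\rangle$, only yields the single scalar inequality $\langle\nu_{m^L},2\rho\rangle\leq\langle Lv,2\rho\rangle$; that does not determine the dominant vector $\nu_{m^L}$ and so does not force $\nu_{m^L}=Lv$. (In $\GL_3$ with $Lv=(2L,L,0)$, any dominant $v'$ with $\langle v',2\rho\rangle\leq 4L$ survives the constraint.) You also invoke $\nu(g^n)=n\,\nu(g)$ for the group-level Newton invariant; this is true but not formal from the definitions in this paper — it needs an argument at the level of conjugacy classes, not just of $\tW$.

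Your ``alternative, more combinatorial route'' has a similar circularity: Lemma~\ref{seq-n} applied to $g\in\CI\dot w\CI$ produces an endpoint $u'x'\in\tW_{\min}$ that depends on $g$, and its $\pi$-value need not equal $\pi(w)$ (indeed the Newton stratum is not a union of $\CI$-double cosets, so different $g$ in $\CI\dot w\CI$ in general land in different $X_{\nu'}$). The claim that for $g$ in the smaller set $\CI_M\dot w\CI_M$ the reduction ``never hits the split-off alternative'' is exactly the content you are trying to prove, and is not supplied.

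The paper closes the gap differently: it does \emph{not} try to compute the $G$-Newton point of $m$ directly. Using the Newton decomposition $G=\sqcup_{\nu'}G(\nu')$, it suffices to show $M(\nu)\cap G(\nu')=\emptyset$ for all $\nu'\neq\bar\nu$. Assuming $m\in M(\nu)$ is $G$-conjugate to $h'\in\CI\dot u'\dot x'\CI$ with $\pi(x')=\nu'$, one takes $nl$-th powers of both sides, applies \S\ref{basic-min}(c) on both the $M$-side (giving $(\CI_M W_K\CI_M)(\CI_M t^{nlv}\CI_M)$) and the $G$-side (giving $(\CI W_{K'}\CI)(\CI t^{nlv'}\CI)$), and obtains a nonempty intersection of two sets of $\CI$-double cosets built from $t^{nlv}$ and $t^{nlv'}$ after a fixed bounded perturbation. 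Because $v\neq v'$ are both $G$-dominant, these families of double cosets separate for $n\gg0$ — not merely by length, but as subsets — yielding a contradiction (by the estimates of \cite[\S2.6]{hecke-1}). This disjointness-plus-growth-rate argument is what supplies the vector-valued separation that a single length inequality cannot.
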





\begin{proof} The idea is similar to the proof of \cite[Theorem 2.1]{hecke-1}. 

By \S \ref{M-M'} (a), after conjugating by a suitable element in $W_0$, we may assume that $M$ is a standard Levi subgroup. Since $M=M_\nu$, the $V$-factor of $\nu$ is $G$-dominant. By the Newton decomposition of $G$ (\cite[Theorem 2.1]{hecke-1}), it suffices to prove that $M(\nu) \cap G(\nu')=\emptyset$ for any $\nu' \in \aleph$ with $\nu' \neq \bar \nu$. 

Let $\nu=(\t, v)$ and $\nu'=(\t', v')$. If the image of $\t$ in $\Omega$ does not equal to $\t'$, then $M(\nu) \cap G(\nu')=\emptyset$. Now we assume that the $\Omega$-factor matches. Since $\nu' \neq \bar \nu$, we have $v' \neq v$.  

By \cite[Remark 2.6]{hecke-1}, $$M(\nu)=\cup_{(x, K, u)} M \cdot \CI_M \dot u \dot x \CI_M, \quad G(\nu')=\cup_{(x', K', u')} G \cdot \CI \dot u' \dot x' \CI,$$ where $(x, K, u)$ runs over standard triples of $\tW(M)$ such that $u x \in \tW(M)_{\min}$ and $\pi_M(x)=\nu$,  $(x', K', u')$ runs over standard triples of $\tW$ such that $u' x' \in \tW_{\min}$ and $\pi(x')=\nu'$. 

If $M(\nu) \cap G(\nu') \neq \emptyset$, then there exists standard triples $(x, K, u)$ and $(x', K', u')$ as above and $h \in \CI_M \dot u \dot x \CI_M, h' \in \CI \dot u' \dot x' \CI, g \in G$ such that $g  h g \i=h'$. For any $n \in \BN$, we have $g h^n g \i=(h')^n$. By \S \ref{basic-min} (c), we have $$h^n \in (\CI_M W_K \CI_M) (\CI_M \dot x^n \CI_M), \quad (h')^n \in (\CI W_{K'} \CI) (\CI (\dot x')^n \CI).$$ Let $l >0$ with $l v, l v' \in X_*(Z)$. Suppose that $g \in \CI \dot z \CI$ for some $z \in \tW$. Then for any $n \in \BN$, we have $$\CI \dot z \CI t^{n l v} \CI (\CI W_K \CI) \CI \dot z \i \CI (\CI \dot W_{K'} \CI) \cap \CI t^{n l v'} \CI \neq \emptyset.$$

Similar to the argument in \cite[\S 2.6]{hecke-1}, this is impossible for $n \gg 0$. The statement is proved.
\end{proof} 

\begin{corollary}\label{in-G-nu}
The image of the map $\bar i_{\nu}$ is contained in $\bar H(\bar \nu)$. 
\end{corollary}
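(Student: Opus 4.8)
The plan is to verify the containment on the characteristic functions that span $H(M;\nu)$, and to feed into this Proposition \ref{M-G-nu} together with the admissibility of the Newton strata.

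First I would recall that $H(M;\nu)$ is spanned by the characteristic functions $\delta_{m\CK_M}$ with $m\in M$ and $\CK_M$ an open compact subgroup of $\CI_M$ such that $m\CK_M\subset M(\nu)$. By linearity of $\bar i_\nu$ it therefore suffices to show $\bar i_\nu(\delta_{m\CK_M})\in\bar H(\bar\nu)$ for each such generator. By Theorem \ref{levi}(1), for every sufficiently small open compact subgroup $\CK$ of $G$ we have $\bar i_\nu(\delta_{m\CK_M})=\delta_\nu(m)^{-1/2}\,\tfrac{\mu_M(\CK_M)}{\mu_G(\CK_M\CK)}\,\delta_{m\CK_M\CK}+[H,H]$, so it is enough to exhibit a single such $\CK$ for which $m\CK_M\CK\subset G(\bar\nu)$; then $\delta_{m\CK_M\CK}\in H(\bar\nu)$ and its class lies in $\bar H(\bar\nu)$ by definition.

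Now $m\CK_M$ is an open compact subset of $M(\nu)$, and Proposition \ref{M-G-nu} gives $M(\nu)\subset G(\bar\nu)$, so $m\CK_M\subset G(\bar\nu)$. By the admissibility of the Newton strata (\cite[Theorem 3.2]{hecke-1}), enlarging an open compact subset of $G(\bar\nu)$ by a sufficiently small open compact subgroup of $G$ keeps it inside $G(\bar\nu)$; applying this to $m\CK_M$ produces the required $\CK$. Choosing $\CK$ small enough to satisfy this and the conclusion of Theorem \ref{levi}(1) at the same time finishes the argument.

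I do not expect a genuine obstacle: the whole content is packed into Proposition \ref{M-G-nu} and the already-established admissibility of the Newton strata. The only point that needs a word is that the two smallness constraints on $\CK$ can be imposed simultaneously, and this is automatic, since each of them holds for all sufficiently small $\CK$.
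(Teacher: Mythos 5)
Your argument is correct and follows essentially the same route as the paper: reduce to generators $\delta_{m\CK_M}$, use Proposition~\ref{M-G-nu} to place $m\CK_M$ inside $G(\bar\nu)$, and then invoke the admissibility of Newton strata from \cite[Theorem~3.2]{hecke-1} to find a small enough $\CK$ (the paper takes $\CK=\CI_n$) with $m\CK_M\CK\subset G(\bar\nu)$. Your remark that the two smallness conditions on $\CK$ can be imposed simultaneously is exactly the point the paper handles implicitly, so there is nothing to add.
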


\begin{proof}
Let $m \in M$ and $\CK_M$ be an open compact subgroup of $\CI_M$ with $m \CK_M \subset M(\nu)$. By Proposition \ref{M-G-nu}, $m \CK_M \subset G(\bar \nu)$. Let $X$ be an open compact subset of $G$ with $m \CK_M \subset X$. By \cite[Theorem 3.2]{hecke-1}, there exists $n \in \BN$ such that $X \cap G(\bar \nu)$ is stable under the right multiplication by $\CI_n$. In particular, $m \CK_M \CI_n \subset G(\bar \nu)$. Thus $\bar i_{\nu}(\d_{m \CK_M}) \in \bar H(\bar \nu)$.  
\end{proof}

\subsection{} In order to prove the other direction, we use the notion of alcove elements in \cite{GHKR} and \cite{GHN}.

Let $w \in \tW$. We may regard $w \in \text{Aff}(V)$ as an affine transformation. Let $p: \text{Aff}(V)=V \rtimes GL(V) \to GL(V)$ be the natural projection map. Let $v \in V$. We say that $w$ is a $v$-alcove element if \begin{itemize}
\item $p(w)(v)=v$;

\item $N_v \cap \dot w \CI \dot w \i \subset N_v \cap \CI$. 
\end{itemize}

Note that the first condition implies that $\dot w M_v \dot w \i=M_v$. We have the following result. 

\begin{theorem}\label{Levi-2}
Let $w \in \tW$. If $w$ is a $\nu_w$-alcove element, then any element in $\CI \dot w \CI$ is conjugate by $\CI$ to an element in $\dot w \CI_{M_{\nu_w}}$. 
\end{theorem}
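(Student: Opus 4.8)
The statement asserts that if $w$ is a $\nu_w$-alcove element, then every $g \in \CI \dot w \CI$ is $\CI$-conjugate into $\dot w \CI_{M_{\nu_w}}$, where $M = M_{\nu_w}$. Write $v = \nu_w$, $N = N_v$, $N^- = N^-_v$, so that $\CI = (\CI \cap N^-)(\CI \cap M)(\CI \cap N)$ (Iwahori factorization, in an order adapted to $w$). The strategy is to run an iterative conjugation argument on the Moy--Prasad filtration, exactly in the spirit of Proposition \ref{Levi-1}, using the $v$-alcove hypothesis to control the unipotent parts rather than the quasi-positivity hypothesis used there.

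\textbf{Key steps.} First I would use the Iwahori factorization to write an arbitrary $g \in \CI \dot w \CI$, up to $\CI$-conjugacy, in the form $g = u^- \, g_M \, u$ with $u^- \in N^- \cap \CI$, $g_M \in \dot w\,(\CI \cap M)$ (noting $\dot w M \dot w^{-1} = M$ since $p(w)$ fixes $v$), and $u \in N \cap \CI$. The goal is to remove $u^-$ and $u$ by conjugating by elements of $\CI$. The crucial input is the second condition of the $v$-alcove property: $N \cap \dot w \CI \dot w^{-1} \subset N \cap \CI$, equivalently (passing to the opposite) $N^- \cap \dot w^{-1} \CI \dot w \subset N^- \cap \CI$; this says that conjugation by $\dot w$ (hence by $g_M$) is \emph{non-expanding} on the relevant unipotent filtration pieces. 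Concretely, for the $N$-part: since $g_M N_{v,n} g_M^{-1} \subset N_{v,n'}$ for a suitable $n' \le$ something we can track via the alcove condition, the computation $u^- g_M u = (g_M u g_M^{-1})\, u^-\, g_M \cdot [\text{commutator correction}]$ lets me push $u$ leftward and absorb it; iterating and using that the filtration is exhausted (the $\CI_n$ shrink to $1$), the infinite product of conjugators converges in $\CI$ and kills the $N$-component. Then symmetrically, using $N^- \cap \dot w \CI \dot w^{-1} \supset N^- \cap \CI$ — wait, here one must be careful about direction: the alcove condition is one-sided, so for the $N^-$ side one conjugates by elements of $N \cap \CI$ moving $u^-$ to the right, again using that $\dot w$ conjugation does not expand the $N^-$-filtration beyond $\CI$. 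After both passes, $g$ is $\CI$-conjugate to $g_M \in \dot w (\CI \cap M) = \dot w \CI_M$, as desired.

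\textbf{Main obstacle.} The delicate point is making the two successive ``sweeps'' (first clearing $N$, then clearing $N^-$, or vice versa) not interfere: clearing the $N^-$-part by $N$-conjugations could in principle reintroduce an $N$-part via commutators $[N \cap \CI, N^- \cap \CI] \subset \CI \cap M$ plus higher terms. I expect the resolution is that the commutator $[N_{v,i}, N^-_{v,j}]$ lands in $M_{\max(i,j)} N_{v,i+j} N^-_{v,i+j}$ (or a clean variant), so that each sweep only perturbs the already-cleared part at a strictly deeper filtration level; a careful bookkeeping with a simultaneous induction on filtration depth then shows the combined infinite conjugation converges and yields an element of $\dot w \CI_M$. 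The $v$-alcove hypothesis $p(w)(v) = v$ guarantees $\dot w$ preserves $M$, $N$, $N^-$ as groups so that all these factorizations make sense, and the non-expansion condition $N \cap \dot w \CI \dot w^{-1} \subset N \cap \CI$ is precisely what prevents the filtration index from drifting the wrong way during the sweep. I would also remark that the proof is essentially the unipotent ``retraction onto $M$'' argument familiar from Iwahori factorization, specialized so that the anchor point $\dot w \CI_M$ is $\dot w$-stable.
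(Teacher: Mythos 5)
Your basic strategy---iteratively conjugating along a filtration of $\CI$ to peel off the unipotent part, using the $\nu_w$-alcove condition for non-expansion and (implicitly) the eventual contraction of Proposition~\ref{+1} to make each step terminate---is the right one, and your observation that the alcove hypothesis automatically gives the companion containment $\dot w^{-1}(\CI\cap N^-)\dot w \subset \CI\cap N^-$ is correct. But there is a genuine gap exactly at the point you flag as your ``main obstacle.'' The paper does \emph{not} run two successive sweeps over the coarse Iwahori blocks $\CI\cap N$ and $\CI\cap N^-$. It instead imports the \emph{generic Moy--Prasad filtration} $\CI=\CI[0]\supset\CI[1]\supset\cdots$ from \cite[\S 6.2]{GHKR}: a single $\CI$-normal descending chain in which each successive quotient $\CI[r]/\CI[r+1]$ is either absorbed into $\CI_M$ (i.e.\ $\CI[r]\subset\CI_M\CI[r+1]$) or is a \emph{single} affine root subgroup $X_{a+s}$ with $a\in\Phi\setminus\Phi(M)$. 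One then proves by a single induction that $\dot w\, i_M\, i[r]$ is $\CI$-conjugate into $\dot w\,\CI_M\,\CI[r+1]$, handling one root of $N$ or of $N^-$ at a time, via the push-through trick of Proposition~\ref{Levi-1}(1) terminated by Proposition~\ref{+1}. Because each step touches only one root subgroup and descends one level in a linear chain, the interference you worry about---clearing $N^-$ reintroducing $N$-components through commutators---never arises: there is no two-phase sweep to couple.

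As written, your plan leaves that interference unresolved: the commutator estimate $[N_{v,i},N^-_{v,j}]\subset M_{\max(i,j)} N_{v,i+j} N^-_{v,i+j}$ ``(or a clean variant)'' is neither established nor obviously sufficient for a well-founded double induction, and the simultaneous-depth bookkeeping is stated only as an expectation. That bookkeeping is precisely the hard part; the generic Moy--Prasad filtration is the device designed to bypass it. To repair your argument you would either have to actually prove a commutator bound strong enough to drive a two-variable induction---which is delicate and not the route taken in the paper---or, more naturally, replace the coarse factorization $\CI=(\CI\cap N^-)(\CI\cap M)(\CI\cap N)$ by the interleaved filtration and reduce every step to a single affine root subgroup.
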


\begin{proof}
The basic idea is similar to the proof of \cite[Theorem 2.1.2]{GHKR}. 

Write $M$ for $M_{\nu_w}$ and $N$ for $N_{\nu_w}$. We start with the generic Moy-Prasad filtration $\CI=\CI[0] \supset \CI[1] \supset \cdots$. As explained in \cite[\S 6.2]{GHKR}, it is a filtration satisfying the following conditions:

(1) Each $\CI[r]$ is normal in $\CI$;

(2) For each $r$, either $\CI[r] \subset \CI_M \CI[r+1]$ or there exists a root $a \in \Phi-\Phi(M)$ and $s \in \RR$ such that $\CI[r]=X_{a+s} \CI[r+1]$ and $X_{a+s+\e} \subset \CI[r+1]$ for any $\e>0$. 

We show that each element $\dot w i_M i[r]$ with $i_M \in \CI_M$ and $i[r] \in I[r]$ is conjugate by an element in $\CI$ to an element in $\dot w \CI_M \CI[r+1]$ (and that the conjugator can be taken to be small when $r$ is large). 

If $\CI[r] \subset \CI_M \CI[r+1]$, then we may absorb the $\CI_M$ part into $i_M$. Otherwise, there exists a root $a$ outside $M$ such that $\CI[r]=X_{a+s} \CI[r+1]$ and $X_{a+s+\e} \subset \CI[r+1]$ for any $\e>0$. We prove the case where $a$ is a root in $N$. The case where $a$ is a root in $N^-$ can be proved in the same way. 

We have $i[r] \in u \CI[r+1]$ for some $u \in X_{a+s} \subset N_s$. Set $m=\dot w i_M$. By the definition of $P$-alcove elements, $m^{i} u (m^{i}) \i \subset N_s$ for all $i \in \BN$. As in the proof of Proposition \ref{Levi-1} (1),  $\dot w i_M i[r]$ is conjugate by elements in $N_s$ to elements in \begin{align*} m u \CI[r+1] &=(m u m \i) m \CI[r+1] \sim m \CI[r+1] (m u m \i)=m (m u m \i) \CI[r+1] \\ &=(m^{2} u (m^{2}) \i) m \CI[r+1] \sim \cdots \sim (m^{i} u (m^{i}) \i) m \CI[r+1] \sim\cdots. \end{align*} Here $\sim$ means conjugation by elements in $N_s$. 

By Proposition \ref{+1}, there exist $i \in \BN$ such that $m^{i} u (m^{i}) \i \subset \CI[r+1]$. Thus $\dot w i_M i[r]$ is conjugate by an element in $N_s$ to an element in $\dot w \CI_M \CI[r+1]$. 

Now we start with an element in $\dot w \CI$. The convergent product of the conjugators (for all $r$) is an element in $\CI$ and conjugates the given element to an element in $\dot w \CI_M$. 
\end{proof}

\subsection{Proof of Theorem \ref{BL}} By Corollary \ref{in-G-nu}, the image of $\bar i_\nu$ is contained in $\bar H(\bar \nu)$. Now we prove the other direction. By \cite[Corollary 4.2]{hecke-1}, $$\bar H=\sum_{w \in \tW_{\min}; \pi(w)=\bar \nu} \bar H_w,$$ where $H_w$ is the submodule of $H$ consisting of functions supported in $\CI \dot w \CI$ and $\bar H_w$ is the image of $H_w$ in $\bar H$. 

Let $w \in \tW_{\min}$ with $\pi(w)=\bar \nu$. By \cite[Lemma 4.4.3 and Proposition 4.4.6]{GHN}, $w$ is a $\nu_w$-alcove element. Set $M'=M_{\nu_w}$ and $\nu'=\pi_{M'}(w) \in \aleph_{M'}$. 

Let $i_{\nu', w}$ be a positive integer in Proposition \ref{+1}. By definition, $H_w$ is spanned by $\d_{g \CI_n}$ for $g \in \CI \dot w \CI$ and $n>i(\nu', w) \ell(w)$. By the proof of Theorem \ref{levi} (1), for any $n>i(\nu', w) \ell(w)$ and $g \in \dot w \CI_{M'}$, $\d_{g \CI_n}+[H, H]$ is contained in the image of $\bar i_{\nu'}$. 

Let $g \in \CI \dot w \CI$. By Theorem \ref{Levi-2}, there exists $i \in \CI$ and $g' \in \dot w \CI_{M'}$ such that $g=i g' i \i$. Then
$$\d_{g \CI_n}=\d_{i g' \CI_n i \i} \equiv \d_{g' \CI_n} \mod [H, H].$$ Therefore $\bar H_w$ is contained in the image of $\bar i_{\nu'}$. By Proposition \ref{compatible}, $\bar H_w$ is also contained in the image of $\bar i_{\nu}$. 

\section{Adjunction with the Jacquet functor}

\subsection{} Let $R$ be an algebraically closed field of characteristic $\neq p$. Set $H_R=H \otimes_{\BZ[\frac{1}{p}]} R$, $\bar H_{R}=\bar H \otimes_{\BZ[\frac{1}{p}]} R$ and $\bar H_R(\nu)=\bar H(\nu) \otimes_{\BZ[\frac{1}{p}]} R$. Recall that $\fkR(G)_R$ is the $R$-vector space with basis the isomorphism classes of irreducible smooth admissible representations of $G$ over $R$. We consider the trace map $$\Tr^G_R: \bar H_R \to \fkR(G)_R^*.$$

Similarly, for any semistandard Levi subgroup $M$, we have $$\Tr^M_R: \bar H_R(M) \to \fkR(M)_R^*.$$ 

Let $v \in V$ and $M=M_v$. Let $r_{v, R}: \fkR(G)_R \to \fkR(M)_R$ be the (normalized) Jacquet functor. Note that the Jacquet functor does not only depend on the Levi $M$, but also depends on the direction $v$ (or equivalently, the parabolic subgroup $P_v$ with Levi factor $M$). The following result is proved by Bushnell in \cite[Corollary 1]{Bu}. 

\begin{proposition}\label{Bu-adj}
Let $n \in \BN$. Let $v \in V$ and $M=M_v$. Then for any $f \in H^v_R(M, M_n)$, and $\pi \in \fkR_{\CI_n}(G)_R$, we have $$\Tr^M_R(f, r_{v, R}(\pi))=\Tr^G_R(j_{v, n}(f), \pi).$$
\end{proposition}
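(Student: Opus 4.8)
The plan is to reduce the normalized adjunction formula to Bushnell's unnormalized version, which we take as given, and then to match the two statements by bookkeeping the modulus character. Concretely, let $j^{\mathrm{un}}_{v,n}\colon H^v_R(M,M_n)\hookrightarrow H(G,\CI_n)_R$ denote Bushnell's map, normalized so that $j^{\mathrm{un}}_{v,n}(\d_{M_n m M_n})=\frac{\mu_M(M_n)}{\mu_G(\CI_n)}\,\d_{\CI_n m \CI_n}$, and let $r^{\mathrm{un}}_{v,R}\colon\fkR(G)_R\to\fkR(M)_R$ denote the unnormalized Jacquet functor relative to $P_v$. By the very definition of $j_{v,n}$ and of the normalized Jacquet functor, one has $j_{v,n}(\d_{M_n m M_n})=\d_v(m)^{-1/2}\,j^{\mathrm{un}}_{v,n}(\d_{M_n m M_n})$ on double cosets, and $r_{v,R}(\pi)=r^{\mathrm{un}}_{v,R}(\pi)\otimes\d_{P_v}^{-1/2}$ as virtual representations of $M$. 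Bushnell's \cite[Corollary 1]{Bu} is exactly the identity $\Tr^M_R(g,r^{\mathrm{un}}_{v,R}(\pi))=\Tr^G_R(j^{\mathrm{un}}_{v,n}(g),\pi)$ for $g\in H^v_R(M,M_n)$ and $\pi\in\fkR_{\CI_n}(G)_R$.

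By bilinearity in $f$ it suffices to prove the formula for $f=\d_{M_n m M_n}$ with $m$ a $(P_v,\CI_n)$-positive element, since these span $H^v_R(M,M_n)$. For such $m$, the character $\d_{P_v}$ is trivial on the compact group $M_n$ and hence constant, equal to $\d_v(m)$, on the double coset $M_n m M_n$; consequently twisting by $\d_{P_v}^{-1/2}$ scales the associated Hecke operator by $\d_v(m)^{-1/2}$, so that $r_{v,R}(\pi)(\d_{M_n m M_n})=\d_v(m)^{-1/2}\,r^{\mathrm{un}}_{v,R}(\pi)(\d_{M_n m M_n})$ as endomorphisms of the common finite-dimensional space of $M_n$-invariants (the twist, being trivial on $M_n$, changes neither this space nor the $M_n$-action). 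Since a Hecke operator attached to an $M_n$-bi-invariant function always factors through the $M_n$-invariants, taking traces gives $\Tr^M_R(\d_{M_n m M_n},r_{v,R}(\pi))=\d_v(m)^{-1/2}\,\Tr^M_R(\d_{M_n m M_n},r^{\mathrm{un}}_{v,R}(\pi))$. On the group side, $\Tr^G_R(j_{v,n}(\d_{M_n m M_n}),\pi)=\d_v(m)^{-1/2}\,\Tr^G_R(j^{\mathrm{un}}_{v,n}(\d_{M_n m M_n}),\pi)$ directly from the definition. The two right-hand sides are equal by Bushnell's unnormalized formula, and the factors $\d_v(m)^{-1/2}$ cancel against one another, which is the assertion.

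The substantive content is entirely concentrated in Bushnell's unnormalized adjunction, which I would invoke rather than reprove. Its proof rests on the Iwahori-type factorization of $\CI_n$ relative to $P_v$ together with Casselman's comparison between $V^{\CI_n}$ and the $M_n$-invariants of the Jacquet module $V_{N_v}$ under the canonical projection: for $f$ supported on $(P_v,\CI_n)$-positive elements, the operator $\pi(j_{v,n}(f))$ on $V^{\CI_n}$ is intertwined by this projection with the operator on the Jacquet module, and the part of $V^{\CI_n}$ not seen in $V_{N_v}$ is annihilated by sufficiently positive Hecke operators (nilpotence in the unipotent directions), so the two traces agree even though the projection is in general neither injective nor surjective. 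If one wanted a self-contained argument, this positivity/nilpotence step would be the only place where genuine work is required; everything else, including the normalization reduction above, is formal.
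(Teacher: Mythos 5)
Your reduction of the normalized adjunction formula to Bushnell's unnormalized one, by tracking the $\d_v(m)^{-1/2}$ factor on both the Hecke-algebra side and the Jacquet-module side, is exactly what the paper does: it cites \cite[Corollary 1]{Bu} and records the same normalization discrepancy in the paragraph defining $j_{v,n}$. Your write-up is correct and merely makes explicit the bookkeeping the paper leaves implicit.
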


\

The main result of this section is the following adjunction formula. 

\begin{theorem}\label{adj}
Let $M$ be a semistandard Levi subgroup and $\nu \in \aleph_M$. Suppose that $M=M_\nu$. Then for any $f \in \bar H_R(M; \nu)$ and $\pi \in \fkR(G)_R$, we have $$\Tr^M_R(f, r_{\nu, R}(\pi))=\Tr^G_R(\bar i_{\nu}(f), \pi).$$
\end{theorem}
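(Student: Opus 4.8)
The plan is to reduce Theorem~\ref{adj} to Bushnell's adjunction formula (Proposition~\ref{Bu-adj}) by passing to powers, using the quasi-positivity from Proposition~\ref{+1}. First I would reduce to the case $f = \d_{m\CK_M} + [H,H]$ where $m \in M$ and $\CK_M \subset \CI_M$ is an open compact subgroup with $m\CK_M \subset M(\nu)$, since such elements span $\bar H_R(M;\nu)$. Fix $w \in \tW(M)$ with $m \in \CI_M \dot w \CI_M$, and let $i_{v,w}$ be the integer from Proposition~\ref{+1}. By the quasi-positivity, for a sufficiently divisible $l$ the element $m^l$ (or rather a representative of the corresponding double coset) satisfies the $(P_v, \CI_n)$-positivity condition at a suitable level $n$, so that $f^l$, suitably interpreted, lies in $H^v_R(M, M_n)$ and $\bar i_\nu(f)^l$ can be compared with $j_{v,n}(f^l)$ up to the explicit volume normalization factors appearing in Theorem~\ref{levi}(1) and in the definition of $j_{v,n}$.

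Next I would set up the trace-of-powers bookkeeping. For a $p$-adic group, a positive-depth idempotent argument (or simply working inside $H(G,\CK)$ for small $\CK$) lets one reduce everything to $\pi \in \fkR_\CK(G)_R$ for $\CK$ small enough, and likewise $r_{\nu,R}(\pi) \in \fkR_{\CK_M}(M)_R$. On this finite-dimensional space of $\CK$-fixed vectors, $\Tr^G_R(g^l, \pi) = \Tr(\rho(g)^l)$ where $\rho(g)$ is the operator by which $g$ acts; the analogous statement holds on the $M$-side. So from the identity $\Tr^M_R(f^l, r_{\nu,R}(\pi)) = \Tr^G_R(j_{v,n}(f^l),\pi)$ given by Proposition~\ref{Bu-adj}, combined with the comparison $j_{v,n}(f^l) \equiv (\text{const})\cdot \bar i_\nu(f)^l \bmod [H,H]$, one obtains an equality of power sums $\sum_i \lambda_i^l = \sum_j \mu_j^l$ (up to the tracked constants) of the eigenvalues of the two operators, valid for all large $l$ in a fixed congruence class. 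The trick of Casselman cited in the introduction is exactly the device that upgrades such an equality of power sums to the equality of the linear ($l=1$) terms: one shows the relevant operators have the same nonzero eigenvalues with multiplicity, hence the same trace, after accounting for the zero eigenvalue (which contributes nothing). One must be slightly careful that the normalization constant $\d_\nu(m)^{-1/2}$ and the volume ratios behave multiplicatively under $m \mapsto m^l$, which they do by the cocycle property of $\d_\nu$ and by \cite[Lemma 4.6]{hecke-1}.

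I expect the main obstacle to be making precise the statement ``$f^l$ lies in $H^v$ and $\bar i_\nu(f)^l = j_{v,n}(f^l)$'' at the level of the cocenter rather than the algebra. The maps $\bar i_\nu$ and $j_{v,n}$ are genuinely different — $\bar i_\nu$ requires shrinking the open compact subgroup of $G$ as one raises to powers, whereas $j_{v,n}$ keeps $\CI_n$ fixed — so the comparison is only an equality modulo $[H,H]$ and only after choosing $\CK$ adapted to the power $l$. Tracking the double cosets $m^l \CK_M \CK$ versus $\CK_M m^l \CK_M$ and the associated volume factors, and verifying that Proposition~\ref{Levi-1}(2) lets one pass between $\d_{m^l \CK_M \CK}$ and $\d_{\CI_n m^l \CI_n}$-type expressions modulo commutators, is the technical heart; once that dictionary is in place, the trace identity and Casselman's trick finish the argument. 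A secondary point requiring care is that $r_{\nu,R}$ is the \emph{normalized} Jacquet functor relative to $P_v$, so the $\d_\nu(m)^{-1/2}$ twist in the definition of $\bar i_\nu$ is precisely what is needed for the normalized adjunction to come out clean, matching the remark after Proposition~\ref{Bu-adj}.
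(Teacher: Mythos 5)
Your broad strategy is the correct one and matches the paper's: use Casselman's trick to reduce the adjunction formula to an equality of traces of powers, then compare with Bushnell's adjunction (Proposition~\ref{Bu-adj}) in a regime where positivity holds. However, there is a genuine gap in the central step, and you have actually flagged it yourself as ``the main obstacle'' without resolving it.

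The gap is this: you deduce that $f^l$ lies in $H^v_R(M, M_n)$ from the quasi-positivity of $m^{i_{v,w}}$ (Proposition~\ref{+1}). But the Hecke \emph{convolution} power $f^l = (\d_{\CK_M m \CK_M})^{*l}$ is supported on the set of all $l$-fold products $m_1 m_2 \cdots m_l$ with each $m_i$ in the double coset of $m$, not merely on the single power $m^l$. Proposition~\ref{+1} controls powers of a fixed element, not such mixed products, so it does not by itself establish that the support of $f^l$ consists of $(P_v,*)$-positive elements. To get this, the paper first reduces, via the Iwahori--Matsumoto presentation of the cocenter \cite[Theorem 4.1, \S 4.6]{hecke-1}, to $f$ supported on a standard-triple double coset $M \dot u \dot x M$, precisely so that \S\ref{basic-min}(c) applies: any product $m_1 \cdots m_l$ with $m_i \in \CI_M \dot u \dot x \CI_M$ lies in $(\CI_M W_K \CI_M)(\CI_M \dot x^l \CI_M)$. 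That structural fact --- the heart of \S\ref{a-b}(a), (b) --- is what makes \emph{every} element of the support of $f^l$ strictly $P_v$-positive for $l \gg 0$, and it also controls the image of the $G$-side multiplication map $p_G$, which is what lets one actually carry out the volume comparison between $\tilde f^l$ and $j_{v,n}(f^l)$. Your reduction to $f = \d_{m\CK_M}$ with $m\CK_M \subset M(\nu)$ is too coarse: it gives a spanning set, but not one for which convolution powers are tractable.

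A secondary omission: you note that $\bar i_\nu$ and $j_{v,n}$ live at different levels ($\bar i_\nu$ shrinks the open compact as $l$ grows, $j_{v,n}$ stays at $\CI_n$), but the mechanism that reconciles them --- Corollary~\ref{str+}, which gives the commutativity of the $j_{v,n}$ with the level-raising inclusions on the strictly positive part of the cocenter --- is not invoked. That compatibility is precisely what lets the paper conclude $\tilde f^l \equiv j_{v,n}(f^l) \bmod [H,H]$ even though $\tilde f$ is built at a much deeper level $n'$. Without Corollary~\ref{str+}, the identification of the ``tracked constants'' in your power-sum comparison cannot be completed.
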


\subsection{}\label{a-b}
Let $(x, K, u)$ be a standard triple of $\tW(M)$ such that the Newton point of $x$ is $v$. Let $\mathfrak i$ be the smallest positive integer with $\mathfrak i v \in X_*(Z)$. Let $i \in \BN$ such that for any $\a \in \Phi_{v, +}$, $\<i \mathfrak i v, \a\> \ge \sharp W_K+(\mathfrak i-1) \ell(x)+1$. Let $l \ge i \mathfrak i$. Then $l=i' \mathfrak i+j$ for some $i' \ge i$ and $0 \le j<\mathfrak i$. Then for any $m_1, \cdots, m_l \in \CI_M \dot u \dot x \CI_M$, by \S \ref{basic-min} (c), we have $$m_1 m_2 \cdots m_l \in (\CI_M W_K \CI_M) (\CI_M \dot x^j \CI_M) (\CI_M t^{i' \mathfrak i v} \CI_M).$$

Note that for $g \in \CI t^{i' \mathfrak i v} \CI$, $g N_n g \i \subset N_{n+\sharp W_K+(\mathfrak i-1) \ell(x)+1}$. Also $(\CI W_K \CI) (\CI \dot x^j \CI)  \subset \cup_{w \in \tW; \ell(w) \le \sharp W_K+(\mathfrak i-1) \ell(x)} \CI \dot w \CI$. Thus $(m_1 \cdots m_l) N_n (m_1 \cdots m_l) \i \subset N_{n+1}.$ Similarly $(m_1 \cdots m_l) \i N^-_n (m_1 \cdots m_l)  \subset N^-_{n+1}.$ Therefore, 

(a) Let $l \ge i \mathfrak i$ and $m_1, \cdots, m_l \in \CI_M \dot u \dot x \CI_M$, then $m_1 m_2 \cdots m_l$ is a $P_v$ strictly positive element. 

Moreover, for any $n, l' \in \BN$ and $m_1, \cdots, m_{l'} \in \CI_M \dot u \dot x \CI_M$, we have \begin{gather*} (m_1 \cdots m_{l'}) N_{n+\sharp W_K+(\mathfrak i-1) \ell(x)} (m_1 \cdots m_{l'}) \i \subset N_{n},  \\ (m_1 \cdots m_{l'}) \i N^-_{n+\sharp W_K+(\mathfrak i-1) \ell(x)} (m_1 \cdots m_{l'}) \subset N^-_{n}.\end{gather*}

One deduces that 

(b) Let $n, l' \in \BN$, and $g_1, \cdots g_{l'} \in N_{n+\sharp W_K+(\mathfrak i-1) \ell(x)} \CI_M \dot u \dot x \CI_M N^-_{n+\sharp W_K+(\mathfrak i-1) \ell(x)}$. Then $g_1 \cdots g_{l'} \in N_n M N^-_n$. 

\subsection{Proof of Theorem \ref{adj}}
By \cite[Theorem 4.1 \& \S 4.6]{hecke-1}, it suffices to prove it for locally constant functions on $M$, supported  in $M \dot u \dot x M$, where $(x, K, u)$ is a standard triple of $\tW(M)$ and the Newton point of $x$ is $v$. 

Let $n>\sharp W_K+(\mathfrak i-1) \ell(x)$ such that $\pi \in \fkR_{\CI_n}(G)_R$. It is enough to consider the function $f=\d_{M_n m M_n}$, where $m \in M \dot u \dot x M$. 

Let $n' \gg n$ and $\tilde f=\frac{\d_v(m)^{-\frac{1}{2}}}{\mu_N(N_{n'}) \mu_{N^-}(N^-_{n'})} \d_{N_{n'} M_n m M_n N^-_{n'}}$. By Theorem \ref{levi} (1), $\tilde f$ represents the element $\bar i_v(f) \in \bar H$.  By Casselman's trick \cite[Corollary 4.2]{Ca}, it suffices to prove that for $l \gg 0$, $\Tr^M_R(f^l, r_v(\pi))=\Tr^G_R(\tilde f^l, \pi)$. 

Let $p_M: (M_n m M_n)^l \to M$ and $p_G: (N_{n'} M_n m M_n N^-_{n'})^l \to G$ be the multiplication map. Since $l \gg 0$, by \S\ref{a-b} (a) and (b), any element in $\Im(p_M)$ is $P_\nu$ strictly positive and $$\Im(p_G) \subset N_n \Im(p_M) N^-_n \cong N \times \Im(p_M) \times N^-.$$ We have the following commutative diagram 
\[
\xymatrix{(N_{n'} M_n m M_n N^-_{n'})^l \ar[r]^-{p_G} \ar[d]_-{pr^l} & \Im(p_G) \ar[d]^-{pr_1} \\ (M_n m M_n)^l \ar[r]^-{p_M} & \Im(p_M),}
\]
where $pr: N \times M \times N^- \to M$ is the projection map and $pr_1$ is the restriction of $pr$ to $\Im(p_G)$. 

Let $m' \in \Im(p_M)$. Then \begin{align*} \mu_{G^l}(p_G \i pr_1 \i (M_n m' M_n)) &=\mu_{G^l}((pr^l) \i p_M \i (M_n m' M_n)) \\ &=\mu_N(N_{n'})^l \mu_{N^-}(N^-_{n'})^l \mu_{M^l}(p_M \i (M_n m' M_n)). \end{align*} By Proposition \ref{Levi-1} (2), $\d_{i \CI_{n'} M_n m' M_n \CI_{n'} i'} \equiv \d_{\CI_{n'} M_n m' M_n \CI_{n'}} \mod [H, H]$ for any $i \in N_n$ and $i' \in N^-_n$.
Thus \begin{align*} \tilde f^l  & \equiv \frac{\d_v(m)^{-\frac{l}{2}}}{\mu_N(N_{n'})^l \mu_{N^-}(N^-_{n'})^l} \sum_{m' \in M_n\backslash M/M_n} \frac{\mu_{G^l}(p_G \i pr_1 \i (M_n m' M_n))}{\mu_G(pr_1 \i(M_n m M_n))} \d_{pr_1 \i(M_n m M_n)} \\ & \equiv \sum_{m' \in M_n\backslash M/M_n} \d_v(m)^{-\frac{l}{2}} \frac{\mu_{M^l}(p_M \i (M_n m' M_n))}{\mu_G(pr_1 \i(M_n m M_n))} \d_{pr_1 \i(M_n m M_n)}\\ & \equiv \sum_{m' \in M_n\backslash M/M_n} \d_v(m)^{-\frac{l}{2}} \frac{\mu_{M^l}(p_M \i (M_n m' M_n))}{\mu_G(\CI_{n'} M_n m' M_n \CI_{n'})} \d_{\CI_{n'} M_n m' M_n \CI_{n'}}   \mod [H, H].\end{align*} 

On the other hand, $$f^l=\sum_{m' \in M_n \backslash M/M_n} \frac{\mu_{M^l}(p_M \i (M_n m' M_n))}{\mu_M(M_n m' M_n)} \d_{M_n m' M_n}.$$ By Corollary \ref{str+}, we have \begin{align*} j_{v, n}(f^l) & \equiv j_{v, n'}(f^l) \\ &=\sum_{m' \in M_n \backslash M/M_n} \d_v(m)^{-\frac{l}{2}} \frac{\mu_{M^l}(p_M \i (M_n m' M_n))}{\mu_M(M_n m' M_n)} \frac{\mu_M(M_{n'})}{\mu_G(\CI_{n'})} \d_{\CI_{n'} M_n m' M_n \CI_{n'}} \mod [H, H].\end{align*} 

Since the elements in $M_n m' M_n$ are $P_v$ strictly positive, we have $\CI_{n'} M_n m' M_n \CI_{n'}=N_{n'} (M_n m' M_n) N^-_{n'}$ and $$\mu_G(\CI_{n'} M_n m' M_n \CI_{n'})=\mu_{N} (N_{n'}) \mu_{N^-}(N^-_{n'}) \mu_M(M_n m' M_n)=\frac{\mu_G(\CI_{n'})}{\mu_M(M_{n'})} \mu_M(M_n m' M_n).$$ So $\tilde f^l \equiv j_{v, n}(f^l) \mod [H, H]$ and $\Tr^M_R(f^l, r_v(\pi))=\Tr^G_R(\tilde f^l, \pi)$. 

\section{The kernel of the trace map} 

\subsection{} Let $M$ be a semistandard Levi subgroup of $G$. Let $M^0$ be the subgroup of $G$ generated by the parahoric subgroups of $M$. Then we have $M/M^0 \cong \Omega_M$. Let $\Psi(M)_R=\Hom_{\BZ} (M/M^0, R^\times)$ be the torus of unramified characters of $M$. 

Let $i_{M, R}: \fkR(M)_R \to \fkR(G)_R$ be the induction functor. Then for any $\s \in \fkR(M)_R$ and $f \in \bar H_R$, the map $$\Psi(M)_R \to R, \qquad \chi \mapsto \Tr_R(f, i_{M, R}(\s \circ \chi))$$ is an algebraic function over $\Psi(M)_R$. 

\subsection{} Let $v \in V$ and $M=M_v$. Recall that \begin{gather*}\tag{a} \bar H(M; v)=\oplus_{\nu_M \in \aleph_M; \nu=(\t_M, v) \text{ for some } \t_M \in \Omega_M} \bar H(M; \nu), \\ \tag{b} \bar H(\bar v)=\oplus_{\nu \in \aleph; \nu=(\t, \bar v) \text{ for some } \t \in \Omega} \bar H(\nu). \end{gather*}

Note that if $\t_M, \t'_M \in \Omega_M$ are mapped under $\k$ to the same element in $\Omega$, then they differ by a central cocharacter of $M$. By the definition of the map $\pi=(\k, \bar \nu)$, if both $(\t_M, v)$ and $(\t'_M, v)$ are in the image of $\pi_M$ and that $\k(\t_M)=\k(\t'_M)$, then $\t_M=\t'_M$. In other words, there is a natural bijection between the components appear on the right hand sides of (a) and (b). We define $$\bar i_v=\oplus_{\nu_M \in \aleph_M; \nu=(\t_M, v) \text{ for some } \t \in \Omega_M} \bar i_{\nu}: \bar H(M; v) \mapsto \bar H(\bar v).$$


\begin{theorem}\label{newton-ker}
Let $v \in V$ and $M=M_v$. Let $f \in \bar H(\bar v)$. If $\Tr^G_R(f, i_{M, R}(\s))=0$ for all $\s \in \fkR(M)_R$, then $f \in \bar i_v (\ker \Tr^M_R)$. 
\end{theorem}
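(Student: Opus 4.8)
The plan is as follows. By Theorem~\ref{BL}, tensored with $R$, the map $\bar i_v\colon \bar H_R(M;v)\to\bar H_R(\bar v)$ is surjective, so I would begin by choosing any $f'\in\bar H_R(M;v)$ with $\bar i_v(f')=f$. Since $\bar i_v$ need not be injective in positive characteristic, it then suffices to show that $f'$ can be adjusted by an element of $\ker\bar i_v$ so as to land in $\ker\Tr^M_R$; equivalently, that the functional $\Tr^M_R(f')\in\fkR(M)_R^*$ lies in $\Tr^M_R(\ker\bar i_v)$.

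The next step is to translate the hypothesis through the adjunction formula. By Theorem~\ref{adj}, $\Tr^M_R(f',r_{v,R}(\pi))=\Tr^G_R(f,\pi)$ for every $\pi\in\fkR(G)_R$; taking $\pi=i_{M,R}(\sigma)$ and invoking the hypothesis gives
$$\Tr^M_R\bigl(f',\ r_{v,R}(i_{M,R}(\sigma))\bigr)=0\qquad\text{for all }\sigma\in\fkR(M)_R,$$
so $\Tr^M_R(f')$ annihilates the image of the operator $r_{v,R}\circ i_{M,R}$ on the Grothendieck group $\fkR(M)_R$. I would then dissect this operator by the Bernstein--Zelevinsky geometric lemma: its graded pieces are indexed by the double cosets in $W_0(M)\backslash W_0/W_0(M)$ (with $W_0(M)$ the relative Weyl group of $M$), the trivial coset giving the identity, the cosets meeting $N_{W_0}(W_0(M))$ giving the conjugations $\Ad(\dot w)$, and every remaining coset factoring through parabolic induction $i^M_L$ from a \emph{proper} Levi subgroup $L\subsetneq M$. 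Two structural facts are then brought to bear. First, $M=M_v$ forces $v$ to be $M$-central, and by Steinberg's theorem $\mathrm{Stab}_{W_0}(v)=W(\Phi_{v,0})=W_0(M)$; hence for $w\notin W_0(M)$ the conjugate $\Ad(\dot w)$ sends the Newton point $v$ to the genuinely different central coweight $\dot w^{-1}v$ of $M$. Second, Proposition~\ref{compatible} says that $\bar i_v$ is unchanged under conjugating the pair $(M,v)$ by such a $w$, so that the entire ``$\Ad(\dot w)$-ambiguity'' is absorbed into $\ker\bar i_v$. To turn these into a clean separation of the graded pieces I would rerun the computation with $\sigma$ replaced by its unramified twists $\sigma\circ\chi$, $\chi\in\Psi(M)_R$: the hypothesis now says that the resulting algebraic function of $\chi$ (the one recorded before the theorem) vanishes identically, and since the graded pieces have distinct $\chi$-behaviour — governed by the distinct central coweights $\dot w^{-1}v$ and by restriction to proper Levis — each piece must vanish on a Zariski-dense set of twists, hence identically.

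It remains to kill the contributions induced from proper Levis $L\subsetneq M$, and for this I would induct on $\dim M_v$. The base case $M_v=G$ is trivial, since there $\bar i_v$, $r_{v,R}$ and $i_{M,R}$ are all the identity and the statement reduces to $f\in\ker\Tr^G_R$. In general, because the $L$ appearing are centralisers strictly smaller than $M$, the corresponding cases of the theorem (for rational coweights $v'$ with $M_{v'}\subsetneq M$) are already available; feeding them into the displayed vanishing — using transitivity of induction and the hypothesis to pass from $\sigma$ to representations induced from such $L$ — shows that $\bar r^M_{L,R}(f')\in\ker\Tr^L_R$ for every proper $L\subseteq M$, which via the adjunction between restriction and induction is exactly what makes the proper-Levi graded pieces contribute nothing. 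Assembling the pieces, $\Tr^M_R(f')$ annihilates all of $\fkR(M)_R$ modulo $\Tr^M_R(\ker\bar i_v)$; subtracting an appropriate element of $\ker\bar i_v$ from $f'$ yields a preimage of $f$ inside $\ker\Tr^M_R$, as wanted.

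The hard part is the two technical hinges of the middle paragraphs: controlling the error terms of the geometric lemma, and matching the ``$\Ad(\dot w)$-ambiguity'' \emph{precisely} with $\ker\bar i_v$ rather than merely with some superset — this is where Proposition~\ref{compatible}, Steinberg's theorem, and the unramified-twist algebraicity must all combine, and where one has to be careful that no characteristic-of-$R$ issue (a stray multiplicity dividing $|N_{W_0}(W_0(M))/W_0(M)|$) obstructs the conclusion, which is the reason one passes to a dense set of twists rather than working at $\chi=1$. The second delicate point is organising the induction on the Levi so that the hypothesis genuinely descends to the smaller groups; this again rests on transitivity of induction together with the separation-by-twists argument applied inside $M$.
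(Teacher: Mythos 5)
Your opening moves are the paper's: choose a preimage $f'\in\bar H_R(M;v)$ of $f$ under $\bar i_v$ (surjectivity is Theorem~\ref{BL}), translate the hypothesis via the adjunction formula of Theorem~\ref{adj} into $\Tr^M_R\bigl(f',\,r_{v,R}\circ i_{M,R}(\s\circ\chi)\bigr)=0$, expand $r_{v,R}\circ i_{M,R}$ by the Mackey/geometric lemma over double cosets, and exploit the unramified twists $\chi\in\Psi(M)_R$. From that point, however, the paper does something decisively simpler than what you propose, and your plan has a genuine gap.

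The paper treats the resulting expression as an algebraic function on $\Psi(M)_R$ and simply extracts its \emph{positive part}, namely the terms $\langle\chi,\l\rangle$ with $\l$ dominant. Because for $w\in{}^M W^M$ the translate $w(v)$ is $G$-dominant if and only if $w=1$ (this is where $M=M_v$ enters), that positive part is exactly the $w=1$ Mackey term, i.e.\ $\Tr^M_R(f',\s\circ\chi)$. Since the whole function vanishes, its positive part vanishes, giving $f'\in\ker\Tr^M_R$ outright. No induction on Levi subgroups, no separate analysis of the $\Ad(\dot w)$ pieces, and no adjustment of $f'$ by an element of $\ker\bar i_v$ is needed; the conclusion $f=\bar i_v(f')\in\bar i_v(\ker\Tr^M_R)$ follows for the first preimage you wrote down.

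The gap in your version is the step where you claim ``Proposition~\ref{compatible} says that $\bar i_v$ is unchanged under conjugating $(M,v)$ by such a $w$, so that the entire `$\Ad(\dot w)$-ambiguity' is absorbed into $\ker\bar i_v$.'' That is not what Proposition~\ref{compatible} provides: it relates $\bar i_\nu$ and $\bar i_{w(\nu)}$ as maps defined on \emph{different} Newton components ($\bar H(M;\nu)$ and $\bar H(\dot w M\dot w^{-1}; w(\nu))$), whereas the $\Ad(\dot w)$ in the geometric lemma is applied to the representation $\s$ of $M$, not to $f'$, and it moves the Newton point off $v$ rather than producing an element of $\ker\bar i_v$. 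So there is no identification of the $w\neq1$ contributions with $\Tr^M_R(\ker\bar i_v)$ available, and your statement ``$\Tr^M_R(f')$ annihilates $\fkR(M)_R$ modulo $\Tr^M_R(\ker\bar i_v)$'' is not established by the tools you cite. Likewise the inductive reduction to proper Levis is not set up to apply here: the hypothesis is a vanishing against $i_{M,R}(\s)$ for $\s\in\fkR(M)_R$, while your recursion wants a vanishing against $i_{L,R}$ for $L\subsetneq M$, and the descent from one to the other is not supplied. Once you notice that the positive-part extraction kills all of the $w\neq1$ terms at a single stroke, the entire inductive superstructure becomes unnecessary; I would restructure the proof around that observation, as the paper does.
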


\begin{proof}
For $\s \in \fkR(M)_R$ and $\chi \in \Psi(M)_R$, the map $$\chi \mapsto \Tr^G(\bar i_{v}(f), i_{M, R}(\s \circ \chi))$$ is an algebraic function on $\chi$. We consider its ``positive part'', i.e. the linear combination of the terms $\<\chi, \l\>$ for dominant coweight $\l$. It is obvious that if an algebraic function is zero, then its ``positive part'' is also zero. 

By the Mackey formula \cite[\S 5.5]{Vi96}, we have 
\begin{align*} \Tr^G_R(\bar i_{v}(f), i_{M, R}(\s \circ \chi)) &=\Tr^M_R(f, r_{M, R} \circ i_{M, R}(\s \circ \chi)) \\ &=\sum_{w \in {}^M W^M} \Tr^M_R(f, i^M_{M \cap {}^w M, R} \circ \dot w \circ r^M_{M \cap {}^{w \i} M, R}(\s \circ \chi) \\ &=\sum_{w \in {}^M W^M} \Tr^M_R(f, i^M_{M \cap {}^w M, R} (\dot w \circ r^M_{M \cap {}^{w \i} M, R}(\s) \circ {}^{\dot w} \chi) ).
\end{align*}
As $w \in {}^M W^M$ and $M=M_v$, $w(v)$ is dominant if and only if $w=1$. Therefore the ``positive part'' of $\Tr^G(\bar i_{v}(f), i_{M, R}(\s \circ \chi))$ is $\Tr^M_R(f, \s \circ \chi)$. 

Therefore if $\Tr^G_R(f, i_{M, R}(\s))=0$ for any $\s \in \fkR(M)_R$ and $\chi \in \Psi(M)_R$, then $\Tr^M_R(f, \s \circ \chi)=0$ for any $\s \in \fkR(M)_R$ and $\chi \in \Psi(M)_R$. Hence $f \in \ker \Tr^M_R$. \end{proof}

\begin{corollary}
Let $v \in V$ and $M=M_v$. Then $$\bar i_{v} \i (\ker \Tr^G_R \mid_{\bar H_R(\bar v)})=\ker \Tr^M_R \mid_{\bar H_R(M; v)}.$$
\end{corollary}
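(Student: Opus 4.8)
The plan is to read off both inclusions from the adjunction formula of Theorem~\ref{adj} together with the ``positive part'' computation already carried out in the proof of Theorem~\ref{newton-ker}. The first step is to upgrade Theorem~\ref{adj} from a single Newton component to the $v$-graded statement. Given $f\in\bar H_R(M;v)$, decompose $f=\sum_{\nu_M}f_{\nu_M}$ along $\bar H_R(M;v)=\oplus_{\nu_M=(\t_M,v)}\bar H_R(M;\nu_M)$. The Jacquet functor attached to each component $\nu_M=(\t_M,v)$ is the one relative to $P_v$ (it depends only on the $V$-factor), so applying Theorem~\ref{adj} to each $f_{\nu_M}$ and summing --- the left sides add because $\bar i_v=\oplus_{\nu_M}\bar i_{\nu_M}$, the right sides because $\Tr^M_R$ is linear --- gives
\[
\Tr^G_R(\bar i_v(f),\pi)=\Tr^M_R(f,r_{v,R}(\pi))
\]
for every $\pi\in\fkR(G)_R$. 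Moreover $\bar i_v(f)\in\bar H_R(\bar v)$ by Corollary~\ref{in-G-nu} (or Theorem~\ref{BL}), so $f$ lies in $\bar i_v^{-1}(\ker\Tr^G_R\mid_{\bar H_R(\bar v)})$ precisely when $\Tr^M_R(f,r_{v,R}(\pi))=0$ for all $\pi$.

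The inclusion $\supseteq$ is then immediate: if $f\in\ker\Tr^M_R\mid_{\bar H_R(M;v)}$, then $\Tr^M_R(f,\s)=0$ for every $\s\in\fkR(M)_R$, in particular for $\s=r_{v,R}(\pi)$, whence $\Tr^G_R(\bar i_v(f),\pi)=0$ for all $\pi$.

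For the inclusion $\subseteq$ I would re-run the ``positive part'' argument from the proof of Theorem~\ref{newton-ker}. Suppose $\Tr^M_R(f,r_{v,R}(\pi))=0$ for all $\pi$, and specialise to $\pi=i_{M,R}(\s\circ\chi)$ with $\s\in\fkR(M)_R$ and $\chi\in\Psi(M)_R$. Then the algebraic function $\chi\mapsto\Tr^M_R(f,\,r_{v,R}\,i_{M,R}(\s\circ\chi))$ on $\Psi(M)_R$ vanishes identically, hence so does its ``positive part''. By the Mackey decomposition of $r_{v,R}\circ i_{M,R}$ as a sum over $w\in{}^M W^M$, and since $M=M_v$ forces $w(v)$ to be non-dominant whenever $w\ne1$, the only summand contributing to the positive part is the one with $w=1$, whose contribution is $\Tr^M_R(f,\s\circ\chi)$. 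Thus $\Tr^M_R(f,\s\circ\chi)=0$ for all $\s$ and $\chi$; taking $\chi$ trivial gives $\Tr^M_R(f,\s)=0$ for all $\s\in\fkR(M)_R$, i.e.\ $f\in\ker\Tr^M_R\mid_{\bar H_R(M;v)}$.

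Every substantive ingredient is already in place --- the adjunction formula (Theorem~\ref{adj}) and the vanishing of the positive part (established inside the proof of Theorem~\ref{newton-ker}) --- so what remains is essentially assembly. The one step requiring a little care, and where I would be most attentive, is the passage in the first paragraph from the componentwise form of Theorem~\ref{adj} to the $v$-graded identity $\Tr^G_R(\bar i_v(f),\pi)=\Tr^M_R(f,r_{v,R}(\pi))$: this uses that the Jacquet functor attached to each component $(\t_M,v)$ of $\bar H_R(M;v)$ is the one relative to $P_v$, so that all the componentwise adjunctions feature the same functor $r_{v,R}$ and add up cleanly.
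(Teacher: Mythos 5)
Your proof is correct and follows essentially the same route as the paper: the inclusion $\supseteq$ is the adjunction formula (Theorem~\ref{adj}), and the inclusion $\subseteq$ is exactly the ``positive part'' plus Mackey argument of Theorem~\ref{newton-ker}, which you re-run inline rather than cite. The one place you are more explicit than the paper is in assembling the component-wise adjunctions $\bar i_\nu$ into the $v$-graded identity $\Tr^G_R(\bar i_v(f),\pi)=\Tr^M_R(f,r_{v,R}(\pi))$, which is a worthwhile clarification but not a different approach.
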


\begin{proof}
If $f \in \ker \Tr^M_R$, then $\Tr^M_R(f, r_{M, R} (\pi))=0$ for any $\pi \in \fkR(G)_R$. By Theorem \ref{adj}, $\Tr^G_R(\bar i_{v}(f), \pi)=0$. Thus $\bar i_{v}(f) \in \ker \Tr^G_R$. The other direction follows from Theorem \ref{newton-ker}.
\end{proof}

\begin{theorem}\label{newton-ker1}
We have $\ker \Tr^G_R=\bigoplus_{v \in V_+} (\ker \Tr^G_R \cap \bar H_R(v)).$
\end{theorem}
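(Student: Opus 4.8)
The plan is to deduce the nontrivial inclusion $\ker\Tr^G_R\subseteq\bigoplus_{v\in V_+}(\ker\Tr^G_R\cap\bar H_R(v))$ by a leading-term argument in the unramified twist, modelled on the proof of Theorem~\ref{newton-ker}; the reverse inclusion is immediate from the Newton decomposition $\bar H_R=\bigoplus_{v\in V_+}\bar H_R(v)$ (Theorem~\ref{newton-h}), since each $\ker\Tr^G_R\cap\bar H_R(v)$ lies in $\ker\Tr^G_R$. So I would fix $f\in\ker\Tr^G_R$, write $f=\sum_{v\in V_+}f_v$ with $f_v\in\bar H_R(v)$ (a finite sum), and aim to show every $f_v$ lies in $\ker\Tr^G_R$. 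Assuming the contrary, the set $S=\{v:f_v\notin\ker\Tr^G_R\}$ is finite and nonempty, so I may choose $v_0\in S$ maximal for the dominance order and put $M=M_{v_0}$, a standard Levi. By Theorem~\ref{BL} each $f_v$ with $v\in S$ can be written $f_v=\bar i_v(g_v)$ with $g_v\in\bar H_R(M_v;v)$, and by Theorem~\ref{adj} one has $\Tr^G_R(f_v,\pi)=\Tr^{M_v}_R(g_v,r_{v,R}(\pi))$ for all $\pi$.

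Next I would feed in the vanishing $\Tr^G_R(f,i_{M,R}(\sigma\circ\chi))=0$ for all $\sigma\in\fkR(M)_R$ and all $\chi\in\Psi(M)_R$; the terms $f_v$ with $v\notin S$ drop out (they already annihilate everything), leaving $\sum_{v\in S}\Tr^{M_v}_R(g_v,r_{v,R}\,i_{M,R}(\sigma\circ\chi))=0$. Expanding $r_{v,R}\,i_{M,R}$ by the Mackey formula, viewing each summand as a regular function of $\chi\in\Psi(M)_R$, and passing to its positive part (the part carried by dominant exponents) exactly as in the proof of Theorem~\ref{newton-ker}, two features should isolate the $v_0$-term. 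First, every unramified character occurring in the $v$-summand has the property that the dominant representative of its Newton average is $v$; this is where one needs the compatibility of the Jacquet restriction with the Newton stratification from \cite{hecke-1}. Since $v_0\notin W_0\cdot v$ for $v\in S$ with $v\neq v_0$, only the $v_0$-summand contributes exponents whose Newton average has dominant representative $v_0$. Second, inside the $v_0$-summand (where $M_{v_0}=M$) the unique such exponent that is itself dominant comes from the trivial double coset in ${}^M W^M$ and contributes precisely $\Tr^M_R(g_{v_0},\sigma\circ\chi)$, by the same computation as in Theorem~\ref{newton-ker}. Reading off this piece of the identity gives $\Tr^M_R(g_{v_0},\sigma\circ\chi)=0$ for all $\sigma$ and $\chi$, hence $g_{v_0}\in\ker\Tr^M_R$.

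Finally, since $g_{v_0}\in\ker\Tr^M_R\cap\bar H_R(M;v_0)$ and $M=M_{v_0}$, the corollary to Theorem~\ref{newton-ker} gives $f_{v_0}=\bar i_{v_0}(g_{v_0})\in\bar i_{v_0}\bigl(\ker\Tr^M_R\cap\bar H_R(M;v_0)\bigr)=\ker\Tr^G_R\cap\bar H_R(v_0)$, contradicting $v_0\in S$; so $S=\varnothing$ and the theorem follows. The step I expect to be the main obstacle is the exponent bookkeeping of the second paragraph: controlling, for each $v\in S$, exactly which unramified characters of $M$ can appear in $\Tr^{M_v}_R(g_v,r_{v,R}\,i_{M,R}(\sigma\circ\chi))$ — in particular that they only ``see'' the $W_0$-orbit of $v$, and that for $v=v_0$ the unique dominant one is the expected trivial-double-coset contribution — which will require merging the Mackey analysis from the proof of Theorem~\ref{newton-ker} with the admissibility and restriction properties of Newton strata from \cite{hecke-1}.
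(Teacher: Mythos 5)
The reverse inclusion and the final step (passing from $g_{v_0}\in\ker\Tr^M_R$ to $f_{v_0}\in\ker\Tr^G_R$ via the corollary of Theorem~\ref{newton-ker}) are fine, but the core of your argument takes a genuinely different route from the paper's, and the route has a gap that you yourself flag as the ``main obstacle.'' You index the contradiction by a maximal $v_0\in S$ under the dominance order and then fix the single Levi $M=M_{v_0}$. The paper instead chooses $M$ to be a \emph{minimal} standard Levi among those $M_v$ for which the corresponding component is nonzero. This is a different extremal choice, and it matters. Once $M=M_{v_0}$ is fixed, the cross terms $\Tr^G_R(f_v, i_{M,R}(\sigma\circ\chi))$ with $v\in S$, $v\neq v_0$ and $M_v\neq M$ involve the Mackey composition $r_{v,R}\circ i_{M,R}$ across two unrelated Levis, and there is no control in the paper (or in \cite{hecke-1}) over which unramified exponents of $\Psi(M)_R$ can occur there. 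Your assertion that ``every unramified character occurring in the $v$-summand has Newton average with dominant representative $v$'' is not a consequence of anything established: the $\chi$-dependence of $\Tr^{M_v}_R(g_v,\,\cdot\,)$ on twisted Jacquet restrictions is governed by $\Omega_M$-level (Kottwitz) data filtered through the various $M\cap{}^{w^{-1}}M_v$, not directly by the Newton point $v$, and I see no reason the exponents it contributes would only ``see'' the $W_0$-orbit of $v$. That claim is a real gap, not routine bookkeeping.

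The paper avoids the cross-Levi bookkeeping entirely. Writing $f=\sum_{v\in V_+}a_v f_v$ and choosing $M$ minimal, it splits $\Tr^G_R(f, i_{M,R}(\sigma\circ\chi))$ into the $M_v=M$ part and the $M_v\neq M$ part, and observes that the former is more regular on $\Psi(M)_R$ than the latter; this separates the two in one stroke. Then, \emph{within} the family $\{v: M_v=M\}$, each $\Tr^G_R(f_v,i_{M,R}(\sigma\circ\chi))$ has leading term proportional to $\pair{\chi,v}$, so a single leading-coefficient comparison kills every such $v$ simultaneously; Theorem~\ref{newton-ker} then places each $a_vf_v$ in $\bar i_v(\ker\Tr^M_R)\subset\ker\Tr^G_R$. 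No maximal $v_0$ and no Newton-average compatibility of exponents across different Levis is needed. To salvage your route you would either have to prove the exponent compatibility you invoked --- which I doubt holds in the generality stated --- or switch to the minimal-Levi choice as in the paper.
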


\begin{remark}
In general, $\oplus_{\nu \in \aleph} (\ker \Tr^G_R \cap \bar H_R(\nu)) \subset \ker \Tr^G_R$. However, the equality may not hold. For example, if $\Omega=\{1, \t\}$ is finite of order $2$ and characteristic of $R$ is also $2$, then for any $\l \in X_*(Z)_+$ and $f \in \bar H(\l)$, we have $f+\t f \in \ker \Tr^G_R$.
\end{remark}

\begin{proof}
The idea is similar to the proof of \cite[Theorem 7.1]{CH}. 

Let $f=\sum_{v \in V_+} a_v f_v \in \ker \Tr^G_R$, where $f_v \in \bar H_v$ and $a_v \in R$. Let $M$ be a minimal standard Levi subgroup such that $a_v \neq 0$ for some $v \in V_+$ with $M=M_v$. Then for $\s \in R(M)$ and $\chi \in \Psi(M)_R$, we have \[\tag{a} \Tr^G_R(f, i_{M, R}(\s \circ \chi))=\sum_{v \in V_+; M=M_v} a_v \Tr^G_R(f_v, i_{M, R}(\s \circ \chi))+\sum_{v \in V_+; M \neq M_v} a_v \Tr^G_R(f_v, i_{M, R}(\s \circ \chi)).\]

This is an algebraic function on $\Psi(M)_R$. Note that in (a), the first part is more regular in $\Psi(M)_R$ than the second part. Therefore we have $$\sum_{v \in V_+; M=M_v} a_v \Tr^G_R(f_v, i_{M, R}(\s \circ \chi))=0$$ for all $\s \in R(M)$ and $\chi \in \Psi(M)_R$. As an algebraic function on $\Psi(M)_R$, the ``leading term'' of $\Tr^G_R(f_v, i_{M, R}(\s \circ \chi))$ is a multiple of $\<v, \chi\>$. Hence $a_v \Tr^G_R(f_v, i_{M, R}(\s \circ \chi))=0$ for every $v \in V_+$ with $M=M_v$. By Theorem \ref{newton-ker}, $$a_v f_v \in \bar i_v(\ker \Tr^M_R \mid_{\bar H(M; v)}).\qedhere$$ 
\end{proof}

Finally, we have

\begin{theorem}\label{inj-i}
Assume that $\text{char}(F)=0$. Let $M$ be a semistandard Levi subgroup and $\nu \in \aleph_M$ with $M=M_\nu$. Then the map $$\bar i_\nu: \bar H(M; \nu) \xrightarrow{\cong} \bar H(\bar \nu)$$ is an isomorphism.
\end{theorem}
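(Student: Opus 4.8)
\subsection*{Proof proposal}

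The surjectivity of $\bar i_\nu$ onto $\bar H(\bar\nu)$ is precisely Theorem~\ref{BL}, so the whole content of the statement is injectivity, and the plan is to detect it after extending scalars to $\BC$. First I would use the hypothesis $\charac(F)=0$ through the freeness of the cocenter: $\bar H$ is a free $\BZ[\frac{1}{p}]$-module, and by the Newton decomposition (Theorem~\ref{newton-h}) $\bar H(M;\nu)$ is a direct summand of $\bar H(M)$, hence a submodule of a free module over the principal ideal domain $\BZ[\frac{1}{p}]$ and therefore itself free. Since $\BC$ is flat over $\BZ[\frac{1}{p}]$, tensoring the exact sequence $0\to\ker\bar i_\nu\to\bar H(M;\nu)\xrightarrow{\bar i_\nu}\bar H$ with $\BC$ gives $\ker\bar i_\nu\otimes_{\BZ[\frac{1}{p}]}\BC=\ker\bar i_{\nu,\BC}$, where $\bar i_{\nu,\BC}:\bar H_\BC(M;\nu)\to\bar H_\BC$ denotes the base change; as $\ker\bar i_\nu$ is free, it vanishes as soon as $\ker\bar i_{\nu,\BC}$ does. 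So it suffices to prove that $\bar i_{\nu,\BC}$ is injective.

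Now fix $f\in\bar H_\BC(M;\nu)$ with $\bar i_{\nu,\BC}(f)=0$. By the adjunction formula of Theorem~\ref{adj} (over $\BC$) we get $\Tr^M_\BC(f,r_{\nu,\BC}(\pi))=\Tr^G_\BC(\bar i_{\nu,\BC}(f),\pi)=0$ for every $\pi\in\fkR(G)_\BC$. I would then specialize to $\pi=i_{M,\BC}(\sigma\circ\chi)$ with $\sigma\in\fkR(M)_\BC$ and $\chi$ ranging over the torus $\Psi(M)_\BC$ of unramified characters, and reproduce the ``positive part'' analysis from the proof of Theorem~\ref{newton-ker}: the geometric lemma (Mackey formula) expands $r_{\nu,\BC}\circ i_{M,\BC}(\sigma\circ\chi)$ in $\fkR(M)_\BC$ as a sum over $w\in{}^MW^M$ of terms $i^M_{M\cap{}^wM,\BC}\bigl(\dot w\,r^M_{M\cap{}^{w^{-1}}M,\BC}(\sigma)\circ{}^{\dot w}\chi\bigr)$, and since $M=M_\nu$ the coweight $w(v)$, with $v$ the $V$-component of $\nu$, is $G$-dominant only for $w=1$; comparing leading exponents of these algebraic functions of $\chi$ then identifies the ``positive part'' of the identically vanishing function $\chi\mapsto\Tr^M_\BC\bigl(f,r_{\nu,\BC}(i_{M,\BC}(\sigma\circ\chi))\bigr)$ with $\Tr^M_\BC(f,\sigma\circ\chi)$. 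Hence $\Tr^M_\BC(f,\sigma\circ\chi)=0$ for all $\sigma$ and $\chi$; specializing $\chi$ to the trivial character gives $\Tr^M_\BC(f,\sigma)=0$ for every $\sigma\in\fkR(M)_\BC$, i.e.\ $f\in\ker\Tr^M_\BC$. Applying Kazhdan's spectral density theorem \cite{Kaz} to the reductive $p$-adic group $M$, which asserts that $\Tr^M_\BC:\bar H_\BC(M)\to\fkR(M)_\BC^*$ is injective, we conclude $f=0$. This proves $\bar i_{\nu,\BC}$ is injective, hence so is $\bar i_\nu$, and together with Theorem~\ref{BL} it follows that $\bar i_\nu$ is an isomorphism onto $\bar H(\bar\nu)$.

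The only genuinely delicate point is the middle step: one has to import the Mackey/``positive part'' computation of Theorem~\ref{newton-ker} into this relative situation, checking that the parabolic used by the Jacquet functor $r_{\nu,\BC}$ is the $P_\nu$ occurring in Theorem~\ref{adj} and that the leading-exponent estimate in $\Psi(M)_\BC$ is available for elements of the Newton component $\bar H_\BC(M;\nu)$; formally this is the same argument already carried out there, so no new difficulty arises. The assumption $\charac(F)=0$ is used only in the first paragraph, where it guarantees that $\bar H$, and hence $\bar H(M;\nu)$, is torsion-free, so that injectivity after base change to $\BC$ descends to injectivity over $\BZ[\frac{1}{p}]$.
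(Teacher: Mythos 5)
Your proposal is correct and follows essentially the same route as the paper: base change to $\BC$, descend injectivity via freeness of $\bar H(M)$ over the PID $\BZ[\frac1p]$ (from \cite{Vi2}), reduce to $\ker\Tr^M_\BC$ via the adjunction formula of Theorem~\ref{adj} together with the Mackey/leading-exponent argument, and finish with Kazhdan's spectral density theorem for $M$. The only cosmetic difference is that the paper quotes the corollary to Theorem~\ref{newton-ker} for the step $\bar i_{\nu,\BC}(f)=0\Rightarrow f\in\ker\Tr^M_\BC$, whereas you re-run the Mackey ``positive part'' computation explicitly; these are the same argument.
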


\begin{proof}
Let $f \in \ker \bar i_\nu$. Set $\tilde f=f \otimes 1 \in \bar H_{\BC}(M; \nu)$. By Theorem \ref{newton-ker} (2), we have $\tilde f \in \ker \Tr^M_\BC$. By the spectral density theorem \cite[Theorem 0]{Kaz}, $\tilde f=0 \in \bar H(M)_{\BC}$. By \cite{Vi2}, $\bar H(M)$ is free. Hence $f=0 \in \bar H(M)$.
\end{proof}


\begin{thebibliography}{99}

\bibitem{BT}
F. Bruhat and J. Tits, \emph{Groupes r\'eductifs sur un corps local I}, Inst. Hautes \'Etudes Sci. Publ. Math. 41 (1972), 5--251.


\bibitem{Bu}
C. J. Bushnell, \emph{Representations of reductive $p$-adic groups: localization of Hecke algebras and applications}, J. London Math. Soc. (2) 63 (2001), 364--386.

\bibitem{BK}
C. J. Bushnell and P.C. Kutzko, \emph{Smooth representations of reductive $p$-adic groups: structure theory via types}, Proc. London Math. Soc. (3) 77 (1998), 582--634.

\bibitem{Ca}
W. Casselman, \emph{Characters and Jacquet modules} Math. Ann. \textbf{230} (1977), 101--105.

\bibitem{CH}
D. Ciubotaru and X. He, \emph{Cocenters and representations of affine Hecke algebra}, arXiv:1409.0902, to appear in JEMS. 

\bibitem{hecke-3}
D. Ciubotaru and X. He, \emph{Cocenters of $p$-adic groups, III: Elliptic cocenter and rigid cocenter}, in preparation.

\bibitem{Dat}
J.-F.~Dat,
\emph{On the $K_0$ of a $p$-adic group}, Invent. Math. {\bf 140} (2000), no. 1, 171--226. 

\bibitem{GHKR} 
U.~G\"{o}rtz, T.~Haines, R.~Kottwitz, D.~Reuman, {\em
Affine Deligne-Lusztig varieties in affine flag varieties}, Compos. Math.~\textbf{146} (2010), 1339--1382.

\bibitem{GHN}
U. G\"ortz, X. He and S. Nie, \emph{$P$-alcoves and nonemptiness of affine Deligne-Lusztig varieties}, Ann. Sci. \`Ecole Norm. Sup. 48 (2015), 647--665.

\bibitem{L}
G. Lusztig,  \emph{Affine Hecke algebras and their graded version}, J. Amer. Math. Soc. 2 (1989), no. 3, 599--635.

\bibitem{hecke-1}
X. He, \emph{Cocenters of $p$-adic groups, I: Newton decomposition}, 1610.04791.

\bibitem{HN1}
X. He and S. Nie, \emph{Minimal length elements of extended affine Weyl group}, Compos. Math. 150 (2014), 1903--1927.

\bibitem{HN2}
X. He and S. Nie, \emph{$P$-alcoves, parabolic subalgebras and cocenters of affine Hecke algebras}, Selecta. Math. 21 (2015), 995--1019.

\bibitem{Kaz}
D.~Kazhdan, \emph{Cuspidal geometry of $p$-adic groups}, J. Analyse Math. {\bf 47} (1986), 1--36.

\bibitem{MP}
A. Moy, G. Prasad, \emph{Unrefined minimal $K$-types for $p$-adic groups} Invent. Math. 116 (1994), 393--408. 

\bibitem{Vi96}
M.-F. Vign{\'e}ras, \emph{Repr\'esentations {$l$}-modulaires d'un groupe r\'eductif{$p$}-adique avec {$l\ne p$}}, Progress in Math. 137, Birkh\"auser, 1996.

\bibitem{Vi2}
M.-F. Vign{\'e}ras, \emph{Int\'egrales orbitales modulo {$l$} pour un groupe r\'eductif {$p$}-adique}, Algebraic geometry: Hirzebruch 70 (Warsaw, 1998), Amer. Math. Soc., Providence, RI, 1999, 241, 327--338.

\bibitem{V}
M.-F. Vign{\'e}ras, \emph{The pro-$p$-Iwahori-Hecke algebra of a reductive $p$-adic group, II}, Compos. Math. 152 (2016), no. 4, 693--753.

\end{thebibliography}
\end{document}